\documentclass[reqno,final]{amsart}
\usepackage{natbib}  
\usepackage{fancyhdr} 
\usepackage{color} 
\usepackage{hyperref} 
\usepackage{graphicx} 

\usepackage{pstricks}
\usepackage{amssymb}
\usepackage{t1enc}
\usepackage[latin1]{inputenc}
\usepackage{amsmath,latexsym,amssymb,graphicx,dsfont,amsthm,amsfonts}
\usepackage{color,enumitem}
\usepackage{tikz}
\usepackage{hyperref}
\usepackage{umoline,comment}


\definecolor{aleacolor}{rgb}{0.16,0.59,0.78}

\hypersetup{
breaklinks,
colorlinks=true,
linkcolor=aleacolor,
urlcolor=aleacolor,
citecolor=aleacolor}


\pagestyle{fancy} \fancyhf{} \fancyhead[RO,LE]{\small\thepage}
\fancyhead[RE]{\small\shortauthors} \fancyhead[LO]{\small\shorttitle}

\renewcommand{\cite}{\citet}

\theoremstyle{plain}
\newtheorem{theorem}{Theorem}[section]                                          
\newtheorem{proposition}[theorem]{Proposition}                          
\newtheorem{lemma}[theorem]{Lemma}
\newtheorem{corollary}[theorem]{Corollary}

\theoremstyle{definition}

\theoremstyle{remark}
\newtheorem{remark}[theorem]{Remark}
\newtheorem{example}[theorem]{Example}

\makeatletter \@addtoreset{equation}{section} \makeatother

\newcommand{\aleaIndex}[1]{\href{http://alea.impa.br/english/index_v#1.htm}{\bf #1}}
\eheader{}{\aleaIndex{}}{2020}{1}{37}



\newcommand{\E}{\mathbb{E}}

\renewcommand{\P}{\mathbb{P}}

\newcommand{\N}{\mathbb{N}}
\newcommand{\Z}{\mathbb{Z}}

\newcommand{\W}{\mathbf{W}}

\newcommand{\e}[1]{\mathbf{e}_{#1}}
\renewcommand{\i}[1]{\mathbf{i}_{#1}}


%
%
%
%
%
%
%
%
%
%

\begin{document}

\title[Asymptotic Word Length of Random Walks on HNN extensions]{Asymptotic Word Length of Random Walks on \\ HNN Extensions}

\author{Lorenz A. Gilch}


\address{University of Passau\newline
Chair of Stochastics and its Applications\newline 
Innstr. 33, \newline
94032 Passau, Germany}

\email{Lorenz.Gilch@uni-passau.de}
\urladdr{http://www.math.tugraz.at/$\sim$gilch/}
\subjclass[2000]{60J10, 20E06} 
\keywords{random walk, drift, rate of escape, HNN extension, central limit theorem, analyticity}




\begin{abstract}
In this article we consider transient random walks on HNN extensions of finitely generated groups. We prove that the rate of escape w.r.t. some generalised word length exists. Moreover, a central limit theorem with respect to the generalised word length is derived. Finally, we show that the rate of escape, which can be regarded as a function in the finitely many parameters which describe the random walk, behaves as a real-analytic function in terms of probability measures of constant support. 
\end{abstract}

\maketitle

\section{Introduction}
\label{sec:introduction} 

Consider a finitely generated group $G_0$, which contains two isomorphic, finite subgroups $A,B$ with isomorphism $\varphi: A\to B$. Let $S_0\subseteq G_0$ be a finite set which generates $G_0$ as a semigroup, and let $t$ be an additional symbol/letter not contained in $G_0$.
The \textit{HNN extension of $G_0$ with respect to $(A,B,\varphi)$} is given by the set $G$ of all finite words over the alphabet $G_0\cup\{t,t^{-1}\}$, where two words  $w_1,w_2\in G$ are identified as the same element of G if one can transform $w_1$ to $w_2$ by applying the relations inherited from $G_0$ or applying one of the following rules:
$$
\forall a\in A: at=t\varphi(a)\ \textrm{ and } \ \forall b\in B: bt^{-1}=t^{-1}\varphi^{-1}(b). 
$$
A natural group operation on $G$ is given by concatenation of words with possible cancellations of letters in the middle; the empty word $e$ is the group identity. This group construction was introduced by Higman, Neumann and Neumann (see \cite{hnn:49}), whose initials lead to the abbreviation HNN. As we will see later, we can write each $g\in G$ in a unique normal form over some alphabet $\mathcal{A}\subset G_0\cup\{t,t^{-1}\}$. We denote by $\Vert g\Vert$ the word length of $g\in G$ over the \mbox{alphabet $\mathcal{A}$.}
\par
Consider now a group-invariant, transient random walk $(X_n)_{n\in\N_0}$ on $G$ governed by  probability measure $\mu$ with  $\mathrm{supp}(\mu)=S_0\cup\{t,t^{-1}\}$. One important random walk invariant is the \textit{rate of escape w.r.t. the word length} given by the almost sure constant limit $\mathfrak{l}=\lim_{n\to\infty}\Vert X_n\Vert/n$, which exists due to Kingman's subadditive ergodic theorem (see \cite{kingman}). The starting point of this article was the question whether $\mathfrak{l}$ -- regarded as a function in the finitely many parameters $\mu(g)$, \mbox{$g\in S_0\cup\{t,t^{-1}\}$} -- varies real-analytically  in terms of probability measures of constant support. We will study this question in a more generalised setting. For this purpose, let the function \mbox{$\ell: G_0\cup \{t,t^{-1}\}\to[0,\infty)$} represent a ``word length/weight''. We can naturally extend $\ell$ to a length function on $G$ as follows: if $g=g_1\dots g_n\in G$ has the above mentioned normal form representation over the alphabet $\mathcal{A}$ then we set
$$
\ell(g)=\ell(g_1\ldots g_n):=\sum_{i=1}^n \ell(g_i), 
$$
The \textit{asymptotic word length w.r.t. the length function $\ell$} is given by 
$$
\lambda_\ell=\lim_{n\to\infty}\ell(X_n)/n,
$$ 
provided the limit exists. We will also call $\lambda_\ell$ the \textit{rate of escape} or \textit{drift w.r.t. $\ell$}. For arbitrary length functions $\ell$, existence of the rate of escape w.r.t. $\ell$ is not guaranteed a-priori and can \textit{not} be deduced from Kingman's subadditive ergodic theorem in general; see Remark \ref{rem:remarks}.
This article addresses to typical, related questions like existence of the rate of escape  $\lambda_\ell$ (including formulas), a central limit theorem for $\lambda_\ell$ and its real-analytic behaviour in terms of probability measures of constant support. In the following let me explain the importance of these questions for random walks on HNN extensions from three different points of view, namely from the view of random walks on regular languages, from the view of group theory and from the view of analyticity of random walk invariants.
\par 
Due to the unique representation of each $g\in G$ over the (possibly infinite) alphabet $\mathcal{A}$ we may consider $(X_n)_{n\in\N_0}$ as a random walk on a regular language, where at each instant of time only a bounded number of letters at the end of the current word may be modified, removed or added. This class of random walks have been studied in large variety, but mostly for regular languages over \textit{finite} alphabets. Amongst others, 
\mbox{\cite{malyshev,malyshev-inria},} \cite{malyshev2}, 
and  \cite{lalley} investigated random walks on regular languages over finite alphabets.
In particular, \mbox{Malyshev} proved limit theorems concerning existence
of the stationary distribution and the rate of escape w.r.t. the word length. \cite{gilch:08} proved existence of the rate of escape w.r.t. general length functions for random walks on regular languages. 
All the articles above study   regular languages generated by \textit{finite} alphabets. 
Straight-forward adaptions of the proofs concerning the questions under consideration in the present article are not possible. This article extends results concerning existence of the drift from the finite case to the \textit{infinite} case in the setting of HNN extensions.
Studying the rate of escape w.r.t. $\ell$ deserves its own right, since the transient random walks studied in this article converge almost surely to some infinite random word $\omega$ over the alphabet $\mathcal{A}$ in the sense that the length of the common prefix  of  $X_n$ and $\omega$ increases as $n\to\infty$. As an application from information theory one may, e.g., consider $X_n$ as the state of a stack (a last-in first-out queue used in many fundamental algorithms of computer science) at time $n$, and each stabilised letter at the beginning of $X_n$ produces some final ``cost''. Hence, the rate of escape w.r.t. $\ell$  describes the average asymptotic cost.
\par
Let me now outline the importance of the questions under consideration from a group theoretical point of view.
The importance of HNN extensions is due to \textit{Stallings' Splitting Theorem} (see  \cite{stallings:71}): a finitely generated group $\Gamma$ has more than one (geometric) end if and only if $\Gamma$ admits a non-trivial decomposition as a free product by amalgamation or an HNN extension over a finite subgroup. Let me summarize some results about random walks on free products, which are amalgams over the trivial subgroup. For free products of finite groups,  \cite{mairesse1} computed an explicit formula for the rate of escape and the asymptotic entropy by solving a finite system of polynomial ``traffic equations''. In \cite{gilch:11}  different formulas for the rate of escape with respect to the word length of random walks on free products of graphs by three different techniques were computed. The main tool in \cite{gilch:11} was a heavy use of generating function techniques, which will also play a crucial role in the present article. Asymptotic behaviour of return probabilities of random walks on free products has also been studied in many ways; e.g., see \cite{gerl-woess},  \cite{woess3},  \cite{sawyer78},  \cite{cartwright-soardi},  \cite{lalley93}, and  \cite{candellero-gilch}. Random walks on amalgams have been studied in \cite{cartwright-soardi} and \cite{gilch:08}, where a formula for the rate of escape has been established for amalgams of finite groups. 
While random walks on free products have been studied in many ways due to their tree-like structure and random walks on amalgams at least to some extent, random walks on HNN extensions, in general,  have experienced much less attention.  \cite{woess-isr} proved that irreducible random walks with finite range on HNN extensions converge almost surely to infinite words over the alphabet $\mathcal{A}$ and that the set of infinite words together with the hitting distribution form the Poisson boundary.  Further valuable contributions have been done by  \cite{kaimanovich:91} and by  \cite{cuno-sava-huss}, who  studied the Poisson-Furstenberg boundary of random walks  on Baumslag-Solitar groups, which  form a special class of HNN extensions. The present article shall encourage further study of random walks on HNN extensions.
\par
Another main goal of this article is to derive a central limit theorem related to the rate of escape $\lambda_\ell$. If $(Z_n)_{n\in\N_0}$ is a random walk on $\Z^d$ satisfying some second moment condition, then the classical central limit theorem states that \mbox{$(Z_n-n\cdot v)/n^{1/2}$} converges in distribution to $\mathcal{N}(0,\sigma^2)$, where $v$ is the rate of escape w.r.t. the natural distance on the lattice and $\sigma^2$ is the asymptotic variance. A natural question going back to  \cite{bellman} and \cite{furstenberg-kesten60} is whether this law can be generalized to random walks on finitely generated groups w.r.t. some word metrics. However, a central limit theorem can not be stated in the general setting:  \cite{bjoerklund10} used results of  \cite{erschler99} and \cite{erschler01} to construct a counterexample. Nonetheless, in several situations central limit theorems have been established; e.g.,  \cite{sawyer},  \cite{lalley93} and  \cite{ledrappier01} proved central limit theorems for free groups,  \cite{woess2} for trees with finitely many cone types, and \cite{bjoerklund10} for hyperbolic groups with respect to the Greenian metric.
\par 
The third main goal of this article will be to show that $\lambda_\ell$ varies real-analytically in terms of probability measures of constant support. The question of analyticity goes back to Kaimanovich and Erschler who asked whether drift and entropy of random walks on groups vary continuously (or even analytically) when the support of single step transitions is kept constantly; for counterexamples, see \mbox{Remark \ref{rem:counterexample-analyticity}.}
This question has been studied in great variety, amongst others, by  \cite{ledrappier12,ledrappier12-2},  \cite{mathieu:15} and  \cite{gilch:07,gilch:11,gilch:15}.  \cite{haissinsky-mathieu-mueller12} proved analyticity of the drift for random walks on surface groups and also established a central limit theorem for the word length. The  survey article of \cite{gilch-ledrappier13} collects several results on analyticity of drift and entropy of random walks on groups. Last but not least, the excellent work of  \cite{gouezel:15} shows that the rate of escape w.r.t. some word distance, the asymptotic variance and the asymptotic entropy vary real-analytically for random walks on hyperbolic groups. However,  HNN extensions do not necessarily have to be hyperbolic, which makes it interesting to study the question of analyticity of the rate of escape w.r.t. the word length for random walks on HNN extensions.
\par
Finally, let me mention that another random walk's speed invariant is given by the rate of escape w.r.t. the natural graph metric of the underlying Cayley graph of $G$ w.r.t. the generating set $S_0\cup\{t,t^{-1}\}$, which exists due to Kingman's subadditive ergodic theorem; see \cite{kingman},  \cite{derriennic} and  \cite{guivarch}. We remark that, in general, the rate of escape w.r.t. the natural graph metric can \textit{not} necessarily be described via a length function using \textit{stabilising} normal forms of elements of $G$; this is due to the quite subtle behaviour of shortest paths in the Cayley graph, which needs a different approach and goes beyond of the scope of this article; for a discussion on these problems, see Remark \ref{rem:remarks2}.
\par
The plan of this article is as follows: in Section \ref{sec:notation} we give an introduction to random walks on HNN extensions, summarize some basic properties and present the main results of this article. In Section \ref{sec:genfun} we introduce our main tool, namely generating functions. Section \ref{sec:boundary} describes a boundary (see Proposition \ref{lem:infinite-words}) towards which our random walk converges. In Section \ref{sec:exit-times} we introduce a special Markov chain  (see Proposition \ref{lem:exit-time-chain}) which allows us to track the random walk's path to infinity. This construction finally enables us to derive a formula for the rate of escape w.r.t. the natural word length $\mathfrak{l}$ (see Corollary \ref{cor:speed-word-length}) and existence and formulas for the drift $\lambda_\ell$ for general length functions $\ell$ (see Theorems \ref{thm:alternative-drift-formula} and \ref{thm:drift}). A central limit theorem (see   Theorem \ref{thm:clt}) associated with the word length w.r.t. $\ell$ is derived in Section \ref{sec:clt} and 
analyticity of the drift and the asymptotic variance is then proven in Section \ref{sec:analyticity}, see Theorems \ref{th:analyticity} and \ref{thm:variance-analytic}. Some proofs are outsourced into Appendix A in order to allow a better reading flow.


\section{HNN Extensions and Random Walks}
\label{sec:notation}

In this section we recall the definition of HNN extensions, summarise some essential properties,  and introduce a natural class of random walks on them. In particular, we introduce length functions on HNN extensions in dependence of some normal form representation of the elements.

\subsection{HNN Extensions of Groups}
Let $G_0=\langle S_0\,|\,  R_0\rangle$ be a finitely generated group with finite set of generators $S_0\subseteq G_0$, relations $R_0$ and identity $e_0$. Let $A,B$ be finite, isomorphic subgroups of $G_0$ and $\varphi: A\to B$ be an isomorphism. Moreover, let $t$ be a symbol (called \textit{stable letter}), which is not an element of $G_0$. Then the \textit{HNN extension} of $G_0$ over $A,B$ w.r.t. $\varphi$ is given by
$$
G := G_0\ast_{\varphi} :=\bigl\langle S_0, t,t^{-1}\,\bigl|\,  R_0, at=t\varphi(a) \textrm{ for } a\in A\bigr\rangle.
$$ 
That is, $G$ consists of all finite words over the alphabet $S_0\cup\{t,t^{-1}\}$, where any two words which can be deduced from each other with the above relations represent the same element of $G_0$. The empty word is denoted by $e$. A natural group operation on $G$ is given by concatenation of words with possible contractions or cancellations in the middle, where $e$ is then the group identity. The definition of $G$ implies that  $G_0\ast_{\varphi}$ is infinite, since  $t^n\in G$ for all $n\in\N$. Note that the relation $at=t\varphi(a)$ implies 
$$
bt^{-1}=t^{-1}\varphi^{-1}(b) \quad \textrm{ for all  } b\in B.
$$
This group structure was introduced by Higman, Neumann and Neumann, whose initials lead to the abbreviation HNN; see \citet{hnn:49}. For further details and explanations of HNN extensions, we refer, e.g., to  \cite{lyndon-schupp}.
\par
In order to help visualize the concept of HNN extensions, we may  think of the Cayley graph of $G$ w.r.t. the generating set $S_0\cup\{t,t^{-1}\}$. This graph is constructed as follows: initially, take the Cayley graph $\mathcal{X}_0$ of $G_0$ with respect to the generating set $S_0$. At each $a\in A$ we attach an additional edge leading to $at=t\varphi(a)$; at those endpoints we attach another copy of $\mathcal{X}_0$, in which we identify $B$ with the already existing vertices $t\varphi(a)$, $a\in A$. This construction is now performed for every coset $g_0A$, $g_0\in G_0$; analogously, we attach new edges from each $b\in B$ to new vertices $bt^{-1}=t^{-1}\varphi^{-1}(b)$, attach then a new copy of $\mathcal{X}_0$ to those endpoints, which are identified with $A$ in the new copy. This construction is then iterated with each coset and each new attached copy of $\mathcal{X}_0$. 

\begin{example}\label{ex:example}
Consider the base group 
$$
G_0=\mathbb{Z}/(2\mathbb{Z})\times \mathbb{Z}/(2\mathbb{Z})=\langle a,b \mid a^2=b^2=e_0,ab=ba\rangle
$$ 
with subgroups $A=\{e_0,a\}, B=\{e_0,b\}$ and isomorphism $\varphi:A\to B$ defined by $\varphi(e_0)=e_0, \varphi(a)=b$. The Cayley graph of the HNN extension is drawn in \mbox{Figure \ref{fig:example}.}
\end{example}

\begin{figure}[h]
\begin{tikzpicture}[scale=0.7]

\coordinate[label=below:$e$] (e) at (0,0);
\coordinate[label=45:$a$] (a) at (2,1);
\coordinate[label=above:$b$] (b) at (0,2);
\coordinate[label=above:$ab$] (ab) at (2,3);

\coordinate[label=below:$t^{-1}$] (s) at (-3,0);
\coordinate[label=above:$t^{-1}a$] (bs) at (-3,2);
\coordinate[label=below:$bt$] (bt) at (3,2);
\coordinate[label=right:$btb$] (btb) at (5,3);

\coordinate[label=above:$bta$] (bta) at (3,4);
\coordinate[label=right:$btba$] (btba) at (5,5);

\coordinate[label=above:$t$] (t) at (3,0);
\coordinate[label=above:$tb$] (tb) at (5,1);
\coordinate[label=below:$ta$] (ta) at (3,-2);
\coordinate[label=160:$tab$] (tab) at (5,-1);

\coordinate[label=right:$t^2b$] (ttb) at (6.5,-2);
\coordinate[label=right:$t^2$] (tt) at (6.5,0);
\coordinate[label=right:$t^2ab$] (ttab) at (5,-3.5);
\coordinate[label=right:$t^2a$] (tta) at (5,-1.5);

\coordinate[label=120:$tbt$] (att) at (8,1);
\coordinate[label=right:$tbtb$] (attb) at (8,-1);
\coordinate[label=above:$tbta$] (atta) at (9.5,2.5);
\coordinate[label=right:$tbtab$] (attab) at (9.5,0.5);

\coordinate[label=right:$tbt^2$] (attt) at (11,1);
\coordinate[label=right:$tbt^2b$] (atttb) at (12.5,2.5);

\coordinate[label=right:$tbt^2a$] (attta) at (11,3);
\coordinate[label=right:$tbt^2ab$] (atttab) at (12.5,4.5);

\coordinate[label=below:$at^{-1}$] (as) at (-1,1);
\coordinate[label=above:$at^{-1}a$] (abs) at (-1,3);

\draw[red] (e) -- (a);
\draw[orange] (b) -- (ab);
\draw (e) -- (b);
\draw[gray] (a) -- (ab);

\draw[red] (e) -- (a);
\draw (e) -- (b);
\draw[orange]  (b) -- (ab);
\draw[gray]  (a) -- (ab);
\draw[blue]  (e) -- (s);
\draw[blue]  (b) -- (bs);
\draw[red]  (s) -- (bs);
\draw[blue]  (b) -- (bt);
\draw[blue]  (ab) -- (btb);
\draw (bt) -- (btb);
\draw[red] (bt) -- (bta);
\draw[orange]  (btb) -- (btba);
\draw[gray]  (bta) -- (btba);
\draw[blue]  (e) -- (t);
\draw[blue]  (a) -- (tb);
\draw (t) -- (tb);
\draw[red] (t) -- (ta);
\draw[gray]  (ta) -- (tab);
\draw[orange]  (tb) -- (tab);
\draw[blue]  (t) -- (tt);
\draw[blue]  (ta) -- (ttb);

\draw[red] (tt) -- (tta);
\draw (tt) -- (ttb);
\draw[gray]  (tta) -- (ttab);
\draw[orange]  (ttb) -- (ttab);

\draw[blue]  (tb) -- (att);
\draw[blue]  (tab) -- (attb);
\draw[red] (att) -- (atta);
\draw[gray]  (atta) -- (attab);
\draw[orange]  (attb) -- (attab);
\draw (att) -- (attb);

\draw[blue]  (att) -- (attt);
\draw[blue]  (atta) -- (atttb);
\draw[red] (attt) -- (attta);
\draw (attt) -- (atttb);
\draw[gray]  (attta) -- (atttab);
\draw[orange]  (atttb) -- (atttab);
\draw[blue]  (a) -- (as);
\draw[blue]  (ab) -- (abs);
\draw[red] (as) -- (abs);
%
%
\fill (e) circle (2pt);
\fill (a) circle (2pt);
\fill (b) circle (2pt);
\fill (ab) circle (2pt);

\fill (s) circle (2pt);
\fill (bs) circle (2pt);
\fill (bt) circle (2pt);
\fill (btb) circle (2pt);
\fill (bta) circle (2pt);
\fill (btba) circle (2pt);

\fill (t) circle (2pt);
\fill (tb) circle (2pt);
\fill (ta) circle (2pt);
\fill (tab) circle (2pt);

\fill (tt) circle (2pt);
\fill (tta) circle (2pt);
\fill (ttb) circle (2pt);
\fill (ttab) circle (2pt);

\fill (att) circle (2pt);
\fill (atta) circle (2pt);
\fill (attb) circle (2pt);
\fill (attab) circle (2pt);
\fill (attt) circle (2pt);

\fill (attta) circle (2pt);
\fill (atttb) circle (2pt);
\fill (atttab) circle (2pt);
\fill (as) circle (2pt);
\fill (abs) circle (2pt);

%
%

%
%
\end{tikzpicture}
\caption{Part of the Cayley graph of the HNN extension in Example \ref{ex:example}.}
\label{fig:example}
\end{figure}
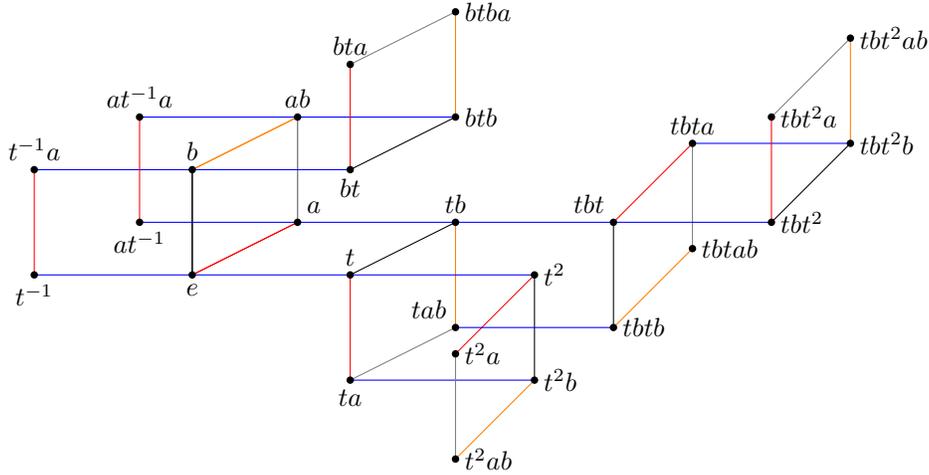

A normal form of the elements of $G_0\ast_{\varphi}$ can be obtained as follows: let $X$ be a set of representatives of the left cosets of $G_0/A$ and $Y$ be a set of representatives of the left cosets of $G_0/\varphi(A)=G_0/B$. We assume w.l.o.g. that $e_0\in X,Y$. Observe that 
$$
t^{-1}e_0t^{}=t^{-1}t\varphi(e_0)=e_0 \quad \textrm{ and } \quad  t^{}e_0t^{-1}=e_0.
$$ 
We get the following normal form expression of each element of $G$:
\begin{lemma}
Each element $g\in G_0\ast_{\varphi}$ has a unique representation of the form
\begin{equation}\label{equ:normalform}
g=g_1 t_1 g_2  t_2 \dots  g_nt_n g_{n+1},
\end{equation}
which satisfies: 
\begin{itemize}
\item $n\in\mathbb{N}_0$, $g_{n+1}\in G_0$, $t_i\in\{t,t^{-1}\}$ for $i\in\{1,\dots,n\}$, 
\item $g_i\in X$, if $t_i=t$, and $g_i\in Y$, if   $t_i=t^{-1}$, 
\item no consecutive subsequences of the form $te_0 t^{-1}$ or $t^{-1}e_0t$.
\end{itemize}
\end{lemma}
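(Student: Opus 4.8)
The statement is the normal-form theorem for HNN extensions (Britton's Lemma) sharpened by the choice of the transversals $X$ and $Y$, and I would split it into an existence part and a uniqueness part.

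\emph{Existence.} Any $g\in G$ is represented by some word over $S_0\cup\{t,t^{-1}\}$; grouping maximal blocks of letters from $G_0$ we may write $g=h_0\,t^{\eps_1}\,h_1\,t^{\eps_2}\cdots t^{\eps_m}\,h_m$ with $h_i\in G_0$ and $\eps_i\in\{+1,-1\}$. I would then repeatedly apply two rewriting moves. A \emph{coset move} at an index $i$: decompose the syllable $h_{i-1}$ sitting immediately left of $t^{\eps_i}$ as $h_{i-1}=g_i a$, where $g_i$ is the representative in $X$ of the coset $h_{i-1}A$ and $a\in A$ when $\eps_i=+1$ (resp. $g_i$ the representative in $Y$ of $h_{i-1}B$ and $a\in B$ when $\eps_i=-1$), and use $at=t\varphi(a)$ (resp. $at^{-1}=t^{-1}\varphi^{-1}(a)$) to push $\varphi^{\pm1}(a)$ past $t^{\eps_i}$ into $h_i$. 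One left-to-right pass of coset moves makes every syllable immediately left of a stable letter a transversal element and, within that pass, leaves it so. A \emph{pinch move}: if some $t^{\eps_i}h_it^{\eps_{i+1}}$ equals $te_0t^{-1}$ or $t^{-1}e_0t$ (i.e. $\eps_{i+1}=-\eps_i$ and $h_i=e_0$), delete it, using $te_0t^{-1}=t^{-1}e_0t=e$, and merge the two flanking $G_0$-syllables into their product. Running ``coset-normalise; then, while a pinch exists, remove one and coset-normalise again'' terminates, because the number of stable letters is a nonnegative integer which strictly decreases at each pinch removal and is unchanged by coset moves; the output is a word of the asserted shape.

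\emph{Uniqueness.} Here I would use van der Waerden's permutation trick. Let $\Omega$ be the set of all finite sequences $(g_1,t_1,\dots,g_n,t_n,g_{n+1})$ satisfying the three listed conditions (with the trivial sequence $(e_0)$ corresponding to $n=0$), and let $[\omega]\in G$ denote the group element such a sequence spells out. For each generator $s\in S_0\cup\{t,t^{-1}\}$ I would define a map $\rho(s)\colon\Omega\to\Omega$: for $h\in G_0$, $\rho(h)$ replaces the leading syllable $g_1$ of $\omega$ by $hg_1$ and then re-runs the coset and pinch moves of the existence argument, starting from the front; $\rho(t)$ (resp. $\rho(t^{-1})$) prepends the block $e_0\,t$ (resp. $e_0\,t^{-1}$) to $\omega$, again followed by the pinch clean-up, so that prepending $t$ to a sequence beginning with $e_0\,t^{-1}$ simply deletes that leading block, and symmetrically. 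One then checks that each $\rho(s)$ is a bijection of $\Omega$ with inverse $\rho(s^{-1})$ (for $t^{\pm1}$ this falls out of the ``no pinch'' condition, and is where that condition is used), and that the relators of $G$ --- those of $R_0$, together with $at\,\varphi(a)^{-1}t^{-1}$ for $a\in A$ --- are sent to the identity of $\mathrm{Sym}(\Omega)$: the first because $\rho$ restricted to the $n=0$ stratum is the left regular representation of $G_0$, the second by a direct computation comparing $\rho(a)\rho(t)$ with $\rho(t)\rho(\varphi(a))$ on each possible shape of $\omega$. Hence $\rho$ factors through $G$, giving an action of $G$ on $\Omega$, and by construction $\rho(g)$ applied to the trivial sequence $(e_0)$ returns exactly any normal form of $g$ produced by the existence step. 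Since $\rho(g)(e_0)$ depends only on $g$, that normal form is unique.

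\emph{Main obstacle and alternatives.} The delicate point is the very definition and the bijectivity of $\rho(h)$, $h\in G_0$: multiplying the leading syllable by $h$ can push a nontrivial element of $G_0$ across $t_1$, which must then be renormalised against $g_2$, possibly cascading down the whole sequence and even producing a pinch, so the recursion and its termination, and the absence of any choice in the clean-up (so that $\rho(s)$ is well defined before the relations can be verified), need to be set up with care. Alternatively, one may simply quote the classical normal-form theorem for HNN extensions from \cite{lyndon-schupp} --- every element of $G$ has a Britton-reduced expression $g_0t^{\eps_1}g_1\cdots t^{\eps_n}g_n$ with no subword $t g t^{-1}$, $g\in B$, nor $t^{-1}g t$, $g\in A$ --- and observe that, by $at=t\varphi(a)$, fixing $X$ and $Y$ removes precisely the residual ambiguity $g_{i-1}t g_i=(g_{i-1}a)\,t\,(\varphi(a)^{-1}g_i)$, $a\in A$ (and its $t^{-1}$-analogue), which forces both existence and uniqueness in the stated form.
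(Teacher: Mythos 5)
Your proposal is correct. For \emph{existence} it is essentially the paper's argument in different clothing: the paper runs an induction on the number of stable letters, pushing the coset representative of the syllable left of each $t^{\pm 1}$ through via $at=t\varphi(a)$ and cancelling a resulting $t^{\pm1}e_0t^{\mp1}$; your coset/pinch rewriting system with termination measured by the number of stable letters is the same computation organised as a confluent reduction rather than an induction. The genuine difference is in \emph{uniqueness}. The paper disposes of it in one sentence (``follows immediately from the uniqueness of representatives of the cosets''), which really only accounts for the residual ambiguity $g_{i}tg_{i+1}=(g_i a)t(\varphi(a)^{-1}g_{i+1})$ within a fixed reduced $t$-pattern and tacitly leans on Britton's Lemma for the rest; you instead supply a complete argument, either via van der Waerden's permutation trick (building the $G$-action on the set of normal forms) or by citing the classical normal form theorem from Lyndon--Schupp and observing that the transversals $X,Y$ kill the remaining freedom. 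Your version is therefore more self-contained on the one point the paper glosses over; one small remark is that the cascade you worry about in defining $\rho(h)$ for $h\in G_0$ can in fact never create a new pinch (since $\varphi(a)g_{i+1}\in B$ iff $g_{i+1}\in B$, i.e.\ iff its transversal representative was already $e_0$), so that step is more benign than your ``main obstacle'' paragraph suggests.
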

\begin{proof}
We prove the claim by induction on the number of letters $t^{\pm 1}$ in \textit{any} given word over the alphabet $S_0\cup\{t^{\pm 1}\}$. First, any $g\in G_0$ is already in the proposed form (\ref{equ:normalform}). Now consider the case of given $g=s_1\dots s_dt^{\varepsilon}s_{d+1}\dots s_{d+e}$ with $d,e\in\N_0$, $\varepsilon\in\{-1,1\}$ and $s_i\in S_0$ for $1\leq i\leq d+e$. If $\varepsilon=1$, we rewrite $s_1\dots s_d=g_1a_1$ with $g_1\in X$ and $a_1\in A$. Then:
$$
g=s_1\dots s_dt s_{d+1}\dots s_{d+e}= g_1a_1ts_{d+1}\dots s_{d+e}=g_1t\underbrace{\varphi(a_1)s_{d+1}\dots s_{d+e}}_{=:g_2\in G_0},
$$
which yields the proposed form. In the case $\varepsilon=-1$, we recall that $bt^{-1}=t^{-1}\varphi^{-1}(b)$ for all $b\in B$. We now write $s_1\dots s_d=g_1b_1$ with $g_1\in Y$ and $b_1\in B$ and obtain the proposed form (\ref{equ:normalform}):
$$
g=s_1\dots s_dt^{-1}s_{d+1}\dots s_{d+e}= g_1b_1t^{-1}s_{d+1}\dots s_{d+e}=g_1t^{-1}\underbrace{\varphi^{-1}(b_1)s_{d+1}\dots s_{d+e}}_{=:g_2\in G_0}.
$$
In particular, the number of letters $t^{\pm 1}$ did \textit{not} increase and the representation is obviously unique.
\par
The induction step follows the same reasoning: consider any word over the alphabet $S_0\cup\{t,t^{-1}\}$ of the form
$$
g=\underbrace{s^{(1)}_1\dots s^{(1)}_{m_1}t_1s^{(2)}_1\dots s^{(2)}_{m_2}t_2\ldots s^{(n-1)}_{m_{n-1}}t_{n-1}}_{=:g'}s^{(n)}_1\dots s^{(n)}_{m_n}t_n\underbrace{s^{(n+1)}_1\dots s^{(n+1)}_{m_{n+1}}}_{=:h},
$$
where $n\geq 2$, $m_1\dots,m_{n+1}\in\N_0$ and $s^{(1)}_1, \dots, s^{(n+1)}_{m_{n+1}}\in S_0$. By induction assumption we can rewrite $g'$ in the form  (\ref{equ:normalform}), say
$$
g'=g'_1t'_1g_2't_2'\dots g'_{k}t'_{k}g'_{k+1} \quad \textrm{ with } k\leq n-1.
$$
We now consider the case $t'_{k}=t$ and $t_n=t$. Rewrite 
$$
g'_{k+1}s^{(n)}_1\dots s^{(n)}_{m_n}=g_na_n
$$
with $g_n\in X$ and $a_n\in A$. Then:
$$
g = g'_1t'_1g_2't_2'\dots g'_{k}t'_{k}g_na_nt_nh = g'_1t'_1g_2't_2'\dots g'_{k}t'_{k}g_nt \varphi(a_n)h
=g'_1t'_1g_2't_2'\dots g'_{k}t'_{k}g_nt h',
$$
where $h':=\varphi(a_n)h\in G_0$, 
that is, we have established the required form (\ref{equ:normalform}). If $t_n=t^{-1}$, rewrite
$$
g'_{k+1}s^{(n)}_1\dots s^{(n)}_{m_n}=g_nb_n
$$
with $g_n\in Y$ and $b_n\in B$. Then:
\begin{eqnarray*}
g &= &g'_1t'_1g_2't_2'\dots g'_{k}t'_{k}g_nb_nt^{-1}h \\
&=& g'_1t'_1g_2't_2'\dots g'_{k}t'_{k}g_nt^{-1} \varphi^{-1}(b_n)h=g'_1t'_1g_2't_2'\dots g'_{k}t'_{k}g_nt^{-1} h',
\end{eqnarray*}
where $h':=\varphi^{-1}(b_n)h\in G_0$.
If $g_n\neq e_0$, we have established the proposed form (\ref{equ:normalform}). In the case $g_n=e_0$, $t'_ke_0t^{-1}$ cancels out, that is,
$g=g'_1t'_1g_2't_2'\dots g'_{k} h'$, which is in the form (\ref{equ:normalform}). The case $t'_{k}=t^{-1}$ follows by symmetry. Uniqueness of the representations follows immediately from the uniqueness of representatives of the cosets. This proves the claim.
\end{proof}
We will refer to the expression in (\ref{equ:normalform}) as \textit{normal form} of the elements of $G$ and we write $\Vert g\Vert$ for the word length of $g\in G$ w.r.t. the normal form. Sometimes we will omit the letter $e_0$ when using normal forms; e.g., instead of writing $e_0te_0t$ we just write $t^2$. In this setting we may omit counting the letter $e_0$ and get the analogous word length. Since this will \textit{not} cause any problems below, we will omit a case distinction whether $e_0$ is counted or not.
Furthermore, we define $[g_1t_1g_2t_2\ldots g_nt_ng_{n+1}]:=g_1t_1g_2t_2\ldots g_nt_n$.
\begin{example}
We revisit Example \ref{ex:example}. In this case we  set $X=\{e_0,b\}$, $Y=\{e_0,a\}$ and obtain, e.g., the following normal forms: 
$$
abt^{-1}=at^{-1}\varphi^{-1}(b)=at^{-1}a,\quad tbt=\varphi^{-1}(b)tt=att=at^2.
$$
Note in Figure \ref{fig:example} the ``rotation'' of the different coloured cosets when pushed along blue $t$-edges.
\end{example}

As a final remark observe that $G$ is amenable if and only if $G_0=A=B$: if $A\subsetneq G_0 $ then the removal of $A\cup B$ from the Cayley graph of $G$ splits the remaining graph into at least three connected components (e.g., $t,t^{-1}, g_0t$ with $g_0\in G_0\setminus A$ are in different components), yielding non-amenability of $G$ (e.g., see \cite[Thm. 10.10]{woess}); if $G_0=A=B$ then the Cayley graph of $G$ has linear growth, yielding amenability of $G$ (e.g., see \cite[Thm 12.2]{woess}).

\subsection{Random Walks on HNN Extensions}
We now introduce a natural class of random walks on HNN extensions arising from random walks on the base group $G_0$.
Let $\mu_0$ be a finitely supported probability measure on $G_0$ whose support generates $G_0$ as a semi-group. W.l.o.g. we assume that $\mathrm{supp}(\mu_0)=S_0$. Furthermore, let be $\alpha,p\in(0,1)$. Then 
\begin{equation}
\mu:=\alpha \cdot \mu_0 + (1-\alpha)\cdot\bigl( p \cdot \delta_t+  (1-p) \cdot \delta_{t^{-1}}\bigr) \label{def:mu}
\end{equation}
is a probability measure on $G$ with $\langle \mathrm{supp}(\mu)\rangle= G$. Let $(\zeta_i)_{i\in\N}$ be an i.i.d. sequence of random variables with distribution $\mu$. A random walk $(X_n)_{n\in\N_0}$ on $G=G_0\ast_{\varphi}$ is then given by
$$
X_0=e,\quad \forall n\geq 1: X_n=\zeta_1\zeta_2\ldots \zeta_n.
$$
For $x,y\in G$, we denote by $p(x,y):=\mu(x^{-1}y)$ the single-step transition probabilities of  $(X_n)_{n\in\N_0}$ and by $p^{(n)}(x,y):=\mu^{(n)}(x^{-1}y)$ the corresponding $n$-step transition probabilities, where $\mu^{(n)}$ is the $n$-fold convolution power of $\mu$. We abbreviate \mbox{$\mathbb{P}_x[\,\cdot\,]:=\mathbb{P}[\,\cdot \, | X_0=x]$.} Analogously, we set $p_0^{(n)}(x_0,y_0):=\mu_0^{(n)}(x_0^{-1}y_0)$ for $x_0,y_0\in G_0$ and $n\in\mathbb{N}$.
We have the following characterisation for the recurrence/transience behaviour of random walks on HNN extensions:
\pagebreak[4]
\begin{lemma}\label{lem:recurrence}
The random walk on $G$ is recurrent if and only if $A=B=G_0$ and $p=\frac12$.
\end{lemma}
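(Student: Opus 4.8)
The plan is to split the argument according to the drift of the walk in the ``stable-letter direction'' and according to whether $G$ is amenable. First I would introduce the group homomorphism $\psi\colon G\to\Z$ determined by $\psi(s)=0$ for all $s\in S_0$ and $\psi(t)=1$; this is well defined for \emph{every} choice of $A,B$, since the relators in $R_0$ are killed trivially and $\psi(at)=1=\psi(t\varphi(a))$ because $A,B\subseteq G_0$. Then $Y_n:=\psi(X_n)$ is a nearest-neighbour random walk on $\Z$ with $\Prob[Y_{n+1}-Y_n=+1]=(1-\alpha)p$, $\Prob[Y_{n+1}-Y_n=-1]=(1-\alpha)(1-p)$ and $\Prob[Y_{n+1}-Y_n=0]=\alpha$, hence with mean increment $(1-\alpha)(2p-1)$. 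If $p\neq\frac12$ this mean is non-zero, so $|Y_n|\to\infty$ almost surely by the strong law of large numbers; since $X_n=e$ forces $Y_n=0$, the identity is then visited only finitely often and the walk is transient. This disposes of every case with $p\neq\frac12$, regardless of $A$ and $B$.

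Assume now $p=\frac12$. As $\varphi\colon A\to B$ is an isomorphism of finite groups, $|A|=|B|$, so either $A\subsetneq G_0$ (equivalently $B\subsetneq G_0$) or $A=B=G_0$. In the first case, by the remark preceding the lemma the removal of the finite set $A\cup B$ splits the Cayley graph of $G$ into at least three connected components, so $G$ is non-amenable (see \cite[Thm.~10.10]{woess}); since $\mathrm{supp}(\mu)$ generates $G$ as a semigroup the walk is irreducible, and Kesten's amenability criterion then gives spectral radius $\rho(\mu)<1$, whence $\sum_{n}\mu^{(n)}(e)<\infty$ and the walk is transient.

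In the remaining case $A=B=G_0$ (still with $p=\frac12$) the group $G_0$ is finite. Letting $k$ be the order of $\varphi$ in $\mathrm{Aut}(G_0)$, the element $t^{k}$ is central, since $t^{-k}g_0t^{k}=\varphi^{k}(g_0)=g_0$ for every $g_0\in G_0$; hence $Z:=\langle t^{k}\rangle\cong\Z$ is a normal subgroup of finite index $k\,|G_0|$, so $G$ is virtually $\Z$. I would then study the walk through $Z$: let $0=R_0<R_1<R_2<\cdots$ be the successive times $n$ with $X_n\in Z$, that is, the return times to the identity of the \emph{finite} Markov chain $(\overline X_n)$ on $G/Z$. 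By finiteness and irreducibility these are a.s.\ finite and have exponential tails, hence finite moments of all orders; and by the strong Markov property together with the normality of $Z$, the process $W_j:=X_{R_j}\in Z$ is a random walk with i.i.d.\ increments on $Z\cong\Z$. Since $(Y_n)$ is a martingale (this is where $p=\frac12$ is used), $R_1$ is an integrable stopping time, and the increments of $Y$ are bounded by $1$, optional stopping gives $\E[\psi(W_1)]=\E[Y_{R_1}]=0$; moreover $|\psi(W_1)|=|Y_{R_1}|\le R_1$, so $\psi(W_1)$ has finite variance. A mean-zero, finite-variance random walk on $\Z$ is recurrent, so $W_j=e$ for infinitely many $j$ almost surely, whence $X_{R_j}=e$ infinitely often and the walk on $G$ is recurrent. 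Combining the three cases gives the stated equivalence.

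The step requiring the most care is the last one: one must justify that $(W_j)$ really is an i.i.d.-increment random walk on $Z$ (this is precisely where normality of $Z$ enters, so that membership in $Z$ can be read off from the $Z$-shifted increments), that the return times $R_j$ to the finite quotient $G/Z$ are integrable with sufficiently many moments, and that optional stopping applies, so that the induced walk on $\Z$ has mean zero exactly when $p=\frac12$; granting these points, recurrence follows from the Chung--Fuchs theorem, while the other two cases reduce to a one-line application of the strong law and a citation of Kesten's criterion. An alternative for $A=B=G_0$ is to invoke the general principle that a group of polynomial growth of degree at most $2$ is recurrent, together with the fact that on a virtually-$\Z$ group recurrence of a finitely supported irreducible random walk is equivalent to the vanishing of its $\Z$-drift; but writing out the induced walk keeps the proof self-contained.
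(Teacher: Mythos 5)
Your proof is correct, and it overlaps with the paper's in its two main ingredients (the homomorphism $\psi:G\to\Z$ killing $S_0$ and sending $t\mapsto 1$, and non-amenability via \cite[Cor.~12.5]{woess} when $A\subsetneq G_0$), but it is organized differently and handles the recurrent case by a genuinely different mechanism. The paper first fixes $A=B=G_0$ and treats both values of $p$ there via the projection $\psi(X_n)$ (a delayed simple random walk on $\Z$), asserting that recurrence of $(X_n)$ is equivalent to recurrence of $(\psi(X_n))$ -- a step that silently uses finiteness of the fibers $\psi^{-1}(0)=G_0$ together with solidarity of recurrence under irreducibility; you instead dispose of \emph{all} cases with $p\neq\frac12$ at once by the strong law applied to $\psi(X_n)$, and in the case $A=B=G_0$, $p=\frac12$ you build the induced walk on the central subgroup $\langle t^k\rangle\cong\Z$ (with $k$ the order of $\varphi$), check mean zero by optional stopping and finite variance from the exponential return times to the finite quotient, and conclude by Chung--Fuchs. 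Your route is longer but makes explicit precisely the point the paper leaves implicit (why recurrence of a projection with finite fibers lifts to recurrence of the walk itself); the paper's route is shorter because it lets the finiteness of $G_0$ do that work in one line. Both case decompositions cover all parameters, and all the auxiliary claims you flag (well-definedness of $\psi$, i.i.d.\ increments of the induced walk on a subgroup, integrability of $R_1$) are standard and hold here.
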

\begin{proof}
Assume that $A=B=G_0$ and $p=\frac12$. Then every normal form has the form $t^ng_0$ or $t^{-n}g_0$ with $n\in\N_0$ and $g_0\in G_0$. Define $\psi: G\to\mathbb{Z}$ by $\psi(t^ng_0):=n$, $\psi(t^{-n}g_0):=-n$ respectively. Then $(X_n)_{n\in\N_0}$ is recurrent if and only if $\bigl(\psi(X_n)\bigr)_{n\in\N_0}$ is recurrent. But $\bigl(\psi(X_n)\bigr)_{n\in\N_0}$ is just a delayed simple random walk on $\mathbb{Z}$, which is obviously recurrent.
\par
If we assume $p\neq\frac12$ but  $A=B=G_0$, then we get transience of $(X_n)_{n\in\N_0}$.
\par
Assume now that $A\subsetneq G_0$, that is $|X|,|Y|\geq 2$. Then $G$ is non-amenable, which yields together with \cite[Cor.12.5]{woess}  that the spectral radius given by $\limsup_{n\to\infty} p^{(n)}(e,e)^{1/n}$ is strictly smaller than $1$, that is, the random walk on $G$ is transient.
\end{proof}
Consider the Cayley graph of $G$ w.r.t. the generating set $S_0\cup\{t,t^{-1}\}$, which induces a natural metric $d(\cdot,\cdot)$.
We have:
\begin{lemma}
For nearest neighbour random walks on $G$, the rate of escape w.r.t. the natural graph metric
$$
\mathfrak{s}=\lim_{n\to\infty} \frac{d(e,X_n)}{n}
$$
exists. Moreover, we have $\mathfrak{s}>0$ if and only if $(X_n)_{n\in\N_0}$ is transient.
\end{lemma}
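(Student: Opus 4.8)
\emph{Existence.} The plan is to invoke Kingman's subadditive ergodic theorem for the array $g_{m,n}:=d(X_m,X_n)$, $0\le m\le n$. The triangle inequality gives $g_{0,n}\le g_{0,m}+g_{m,n}$; left-invariance of $d$ together with the i.i.d.\ structure of $(\zeta_i)_{i\in\N}$ makes $(g_{m,m+k})_{m\ge 0}$ stationary for each fixed $k$; and $g_{0,1}=d(e,\zeta_1)$ is bounded since $\mathrm{supp}(\mu)$ is finite. Hence $d(e,X_n)/n$ converges almost surely and in $L^1$ to $\mathfrak{s}=\inf_{n}\tfrac1n\E\bigl[d(e,X_n)\bigr]\ge 0$, and this limit is an a.s.\ constant by ergodicity of the Bernoulli shift on $(\zeta_i)_{i\in\N}$.

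\emph{Recurrence forces $\mathfrak{s}=0$; reduction of the transient case.} If the walk is recurrent, Lemma~\ref{lem:recurrence} puts us in the situation $A=B=G_0$, $p=\tfrac12$, and $\Prob_e[X_n=e\text{ infinitely often}]=1$; hence almost surely $d(e,X_{n_j})=0$ along a subsequence $n_j\to\infty$, so the (already existing) limit $\mathfrak{s}$ vanishes. This settles the implication ``$\mathfrak{s}>0\Rightarrow$ transient''. For the converse, I would use Lemma~\ref{lem:recurrence} again to split transience into two cases. If $A=B=G_0$ and $p\ne\tfrac12$, consider the homomorphism $\psi\colon G\to\Z$ with $\psi(s)=0$ for $s\in S_0$ and $\psi(t)=1$; it is well defined because the relations of $G$ are respected ($0=0$ for $R_0$, and $1=1$ for $at=t\varphi(a)$). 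As $\psi$ maps every generator into $\{-1,0,1\}$, we get $|\psi(g)|\le d(e,g)$ for all $g\in G$, while $\psi(X_n)=\sum_{i=1}^n\psi(\zeta_i)$ is a random walk on $\Z$ with i.i.d.\ increments of nonzero mean $(1-\alpha)(2p-1)$. By the strong law $|\psi(X_n)|/n\to|(1-\alpha)(2p-1)|>0$ a.s., whence $\mathfrak{s}\ge|(1-\alpha)(2p-1)|>0$.

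\emph{Transient case $A\subsetneq G_0$.} Here $G$ is non-amenable (as remarked after the normal-form lemma and already exploited in the proof of Lemma~\ref{lem:recurrence}), so by Kesten's amenability criterion -- applied to the symmetrized measure $\check\mu\ast\mu$ -- the transition operator $P_\mu$ on $\ell^2(G)$ has operator norm $\theta:=\|P_\mu\|<1$. Consequently $\sup_{y\in G}p^{(n)}(e,y)=\|P_\mu^{\,n}\delta_e\|_\infty\le\|P_\mu^{\,n}\delta_e\|_2\le\theta^{\,n}$. Since balls in the Cayley graph grow at most exponentially, $\#\{y:d(e,y)\le R\}\le\gamma^{R}$ with $\gamma:=|S_0|+3$, it follows that
\[
\Prob\bigl[d(e,X_n)\le\eps n\bigr]=\sum_{y:\,d(e,y)\le\eps n}p^{(n)}(e,y)\ \le\ \gamma^{\eps n}\,\theta^{\,n}.
\]
Choosing $\eps\in\bigl(0,-\tfrac{\log\theta}{\log\gamma}\bigr)$ makes the right-hand side summable in $n$, so Borel--Cantelli gives $d(e,X_n)>\eps n$ for all large $n$ almost surely, i.e.\ $\mathfrak{s}\ge\eps>0$. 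Together with the recurrent case this yields the equivalence $\mathfrak{s}>0\iff(X_n)_{n\in\N}$ transient.

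\emph{Expected main obstacle.} The crux is the case $A\subsetneq G_0$: the recurrence dichotomy of Lemma~\ref{lem:recurrence} by itself does \emph{not} force positivity of the drift, since transient random walks of zero speed exist on amenable groups, so one genuinely needs non-amenability of $G$. The single delicate input is the exponential bound $\sup_y p^{(n)}(e,y)\le\theta^{\,n}$ for our (non-symmetric) measure $\mu$; this is exactly where Kesten's criterion, passed through the symmetrization $\check\mu\ast\mu$, is invoked, and modulo this input the remaining steps are elementary.
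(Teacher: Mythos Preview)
Your proof is correct and follows essentially the same route as the paper: Kingman for existence, the recurrence characterisation (Lemma~\ref{lem:recurrence}) to split the transient case, the projection $\psi:G\to\Z$ when $A=B=G_0$, and non-amenability when $A\subsetneq G_0$. The paper simply cites \cite[Cor.~12.5 and Thm.~8.14]{woess} for the non-amenable case (spectral radius $<1$ implies positive drift), whereas you unpack this into a self-contained Borel--Cantelli argument via $\|P_\mu\|<1$ and the exponential ball growth; this buys independence from the external reference at the cost of the small technicality you flag about symmetrising $\mu$.
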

\begin{proof}
Existence is well-known due to Kingman's subadditive ergodic theorem, see \cite{kingman}. 
Obviously, $\mathfrak{s}>0$ implies transience. Vice versa, by Lemma \ref{lem:recurrence}, transience is equivalent to $A\subset G_0$ or $G_0=A=B$ with $p\neq\frac12$.  
If $A\subsetneq G_0$ then $G$ is non-amenable  and we obtain a spectral radius strictly smaller than $1$, see \cite[Cor. 12.5]{woess}. This yields $\mathfrak{s}>0$; see \cite[Thm. 8.14]{woess}. If $G_0=A=B$ and $p\neq\frac12$ then we can project the random walk onto $\Z$ (see proof of Lemma \ref{lem:recurrence}), which gives $\mathfrak{s}=(1-\alpha)|2p-1|>0$.
\end{proof}
If $G_0$ is finite and $\mathrm{supp}(\mu_0)=G_0$, then one can regard $(X_n)_{n\in\N_0}$ as a random walk on a regular language over a finite alphabet, for which existence and analyticity of $\lim_{n\to\infty} \Vert X_n\Vert/n$ follows from the formulas in \cite{gilch:08}. If $G$ is hyperbolic then analyticity of $\mathfrak{s}$ and the asymptotic entropy follows from the work of \cite{gouezel:15}. Note that, in general, HNN extensions need not to be hyperbolic.
\par
Since we are interested in transient random walks, we exclude from now on the case that both $A=B= G_0$ and $p= \frac12$ hold. 
 
\subsection{Generalised Length Functions on $G$} 
Let $\ell:G_0\cup\{t,t^{-1}\}\to [0,\infty)$ be a  function, which plays the role of a \textit{generalised length} or \textit{weight function} for each letter. For $g=g_1 t_1 g_1  t_2 \dots  g_nt_n g_{n+1}$ in normal form as in (\ref{equ:normalform}), we extend $\ell$ to a ``length function'' on $G$ via
$$
\ell(g_1 t_1 g_1  t_2  \dots  g_nt_n g_{n+1}) :=\sum_{k=1}^n \Bigl( \ell(g_k)+\ell(t_k)\Bigr) + \ell(g_{n+1}).
$$
Note that the natural word length is obtained by setting $\ell(\cdot)=1$.
If there is a non-negative constant number $\lambda_\ell$ such that
$$
\lambda_\ell = \lim_{n\to\infty} \frac{\ell(X_n)}{n}\quad \textrm{almost surely},
$$
then $\lambda_\ell$ is called the \textit{rate of escape} (or \textit{drift} or \textit{asymptotic word length}) \textit{w.r.t. the length function $\ell$}. One aim of this paper is to show existence of this limit in the transient case under the following growth assumption on $\ell$, which will be needed as an integrability condition later. We say that $\ell$ is of \textit{polynomial growth} if there are some $\kappa\in\N$ and $C>0$ such that  $\ell(g_0)\leq C\cdot |g_0|^\kappa$ for all $g_0\in G_0$, where 
\begin{eqnarray*}
|g_0|&=&\min\{m\in\N \mid \exists s_1,\dots,s_m\in G_0: g=s_1\dots s_m\}\\
&=&\min\{m\in\N_0 \mid p^{(m)}(e,g)>0  \}.
\end{eqnarray*}

\begin{remark}\label{rem:remarks}\normalfont
While existence of the rate of escape w.r.t. the natural graph metric given by the almost sure constant limit $\mathfrak{s}=\lim_{n\to\infty} d(e,X_n)/n$ is well-known due to Kingman's subadditive ergodic theorem, existence of $\lambda_\ell$ is not given a-priori for arbitrary length functions $\ell$: e.g., if $g_1,g_2,g_3\in G_0$ with  $g_3=g_1^{-1}g_2$, \mbox{$\ell(g_1)=\ell(g_3)=1$} and $\ell(g_2)=3$, then
$$
\ell(g_1)+\ell(g_1^{-1}g_2) <\ell(g_2);
$$
that is, subadditivity does not necessarily hold, and therefore Kingman's subadditive ergodic theorem is not applicable.
\end{remark}

As an application of generalized length functions we can construct an upper bound for the asymptotic entropy, see Corollary \ref{cor:entropy}. We note that, in general, the natural graph metric can \textit{not} be expressed via length functions.
We refer to Remark \ref{rem:remarks2} for further discussion on the obstacles when studying rate of escape w.r.t. the natural graph metric $\mathfrak{s}$.

\subsection{Main Results}

We summarize the main results of this article. The first main result shows existence of the rate of escape w.r.t. length functions $\ell$ of polynomial growth. As we will see in Section \ref{sec:boundary}, the prefixes of $X_n$ of increasing length stabilize (that is, the prefixes of increasing length are not changed any more after some finite time). We denote by $\e{1}$, $\e{2}$ respectively, the random time from which on the first two letters of $X_n$, the first four letters of $X_n$ respectively, stabilize. The involved expectations (denoted by $\mathbb{E}_\pi[\cdot]$) in the following theorem are taken w.r.t. some invariant probability distribution $\pi$, see (\ref{def:pi}) in Section \ref{sec:exit-times} for more details.
\begin{theorem}\label{thm:alternative-drift-formula}
Let $(X_n)_{n\in\N_0}$ be a transient random walk on $G$ governed by $\mu$ as defined in (\ref{def:mu}), and let $\ell$ be a length function of polynomial growth.
Then there exists a positive constant $\lambda_\ell$ such that
\begin{equation}\label{equ:alternative-drift-formula}
\lambda_\ell=\lim_{n\to\infty}\frac{\ell(X_n)}{n}=\frac{\mathbb{E}_\pi\bigl[\ell([X_{\e{2}}])-\ell([X_{\e{1}}])\bigr]}{\mathbb{E}_\pi[\e{2}-\e{1}]}>0 \quad \textrm{almost surely.}
\end{equation}
\end{theorem}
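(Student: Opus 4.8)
The plan is to reduce the statement to an ergodic theorem computation for the ``exit-point'' Markov chain of Proposition~\ref{lem:exit-time-chain}. First I would record the consequences of transience: by Proposition~\ref{lem:infinite-words} the walk converges almost surely to an infinite word $\omega=g_1t_1g_2t_2\cdots$ over $\mathcal{A}$, i.e.\ for every $k$ the first $k$ blocks $g_it_i$ of the normal form of $X_n$ coincide with those of $\omega$ for all large $n$. Hence all stabilisation times $\e{k}$ are almost surely finite, the number $K_n:=\max\{k:\e{k}\le n\}$ of permanently fixed blocks at time $n$ satisfies $K_n\to\infty$ almost surely, and $g_1t_1\cdots g_{K_n}t_{K_n}$ is an initial segment of the normal form of $X_n$.

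Next I would invoke Proposition~\ref{lem:exit-time-chain}: the exit process is a Markov chain admitting the stationary distribution $\pi$ of~(\ref{def:pi}) and to which the ergodic theorem applies, and by construction both $\e{k+1}-\e{k}$ and $\ell([X_{\e{k+1}}])-\ell([X_{\e{k}}])$ are determined by finitely many consecutive coordinates of this chain. Granting the integrability established below, the ergodic theorem applied to the telescoping identities
\begin{align*}
\e{k}&=\e{1}+\sum_{j=1}^{k-1}\bigl(\e{j+1}-\e{j}\bigr),\\
\ell([X_{\e{k}}])&=\ell([X_{\e{1}}])+\sum_{j=1}^{k-1}\bigl(\ell([X_{\e{j+1}}])-\ell([X_{\e{j}}])\bigr)
\end{align*}
yields, almost surely,
\begin{align*}
\frac{\e{k}}{k}&\longrightarrow m_\tau:=\mathbb{E}_\pi[\e{2}-\e{1}]\in(0,\infty),\\
\frac{\ell([X_{\e{k}}])}{k}&\longrightarrow m_\ell:=\mathbb{E}_\pi\bigl[\ell([X_{\e{2}}])-\ell([X_{\e{1}}])\bigr].
\end{align*}
From $\e{K_n}\le n<\e{K_n+1}$ and $\e{k}/k\to m_\tau$ one then obtains the renewal limit $K_n/n\to 1/m_\tau$ almost surely.

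The integrability, namely $m_\tau<\infty$ and $\mathbb{E}_\pi|\ell([X_{\e{2}}])-\ell([X_{\e{1}}])|<\infty$, is, I expect, the technical heart of the proof and the only place where transience and the polynomial growth of $\ell$ are genuinely used. For the exit-time increments I would prove an exponential tail bound $\mathbb{P}_\pi[\e{2}-\e{1}>m]\le C\rho^m$ with $\rho<1$ (and likewise for the initial time $\e{1}$): when $A\subsetneq G_0$ this rests on non-amenability of $G$ and the resulting spectral radius strictly below $1$, which makes long excursions that still permit an already formed prefix to change exponentially unlikely; in the remaining transient case $G_0=A=B$, $p\neq\tfrac12$ one argues directly via the projection of the walk onto $\mathbb{Z}$ used in the proof of Lemma~\ref{lem:recurrence}. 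In particular $\mathbb{E}_\pi[\e{2}^{q}]<\infty$ for every $q\in\N$. Since $X_{\e{k}}=\zeta_1\cdots\zeta_{\e{k}}$, its normal form contains at most $\e{k}+1$ factors from $G_0$, each satisfying $|g_j|\le\e{k}$; polynomial growth then gives $\ell([X_{\e{k}}])\le\ell(X_{\e{k}})\le C'\,\e{k}^{\kappa+1}$, whence, using $\e{1}\le\e{2}$, $\mathbb{E}_\pi|\ell([X_{\e{2}}])-\ell([X_{\e{1}}])|\le 2C'\,\mathbb{E}_\pi[\e{2}^{\kappa+1}]<\infty$.

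It remains to compare $\ell(X_n)$ with $\ell([X_{\e{K_n}}])$. Splitting off from both $X_n$ and $X_{\e{K_n}}$ their common permanent prefix $g_1t_1\cdots g_{K_n}t_{K_n}$, the difference $\ell(X_n)-\ell([X_{\e{K_n}}])$ reduces to the $\ell$-weights of the not-yet-stabilised remainders at times $\e{K_n}$ and $n$; the exponential tail of the increments forces these remainders, as well as all $G_0$-factors of $X_n$, to have size $O(\log n)$ almost surely, so that polynomial growth bounds the difference by $C''(\log n)^{\kappa+1}=o(n)$. Therefore
\begin{align*}
\frac{\ell(X_n)}{n}&=\frac{\ell([X_{\e{K_n}}])}{K_n}\cdot\frac{K_n}{n}+\frac{\ell(X_n)-\ell([X_{\e{K_n}}])}{n}\\
&\xrightarrow[n\to\infty]{}\ \frac{m_\ell}{m_\tau}=\frac{\mathbb{E}_\pi\bigl[\ell([X_{\e{2}}])-\ell([X_{\e{1}}])\bigr]}{\mathbb{E}_\pi[\e{2}-\e{1}]}\qquad\text{a.s.}
\end{align*}
Positivity of the limit follows from $m_\ell>0$, since over one exit period a genuinely new block is appended to the stable prefix, so under $\pi$ a symbol of positive $\ell$-weight is added with positive probability (here one uses that $\ell$ does not vanish identically on $\mathcal{A}$). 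The main obstacle throughout is the exponential tail estimate for the exit-time increments under $\mathbb{P}_\pi$ together with the bookkeeping that couples it to the polynomial growth hypothesis; the ergodic theorem, the renewal argument, and the telescoping are routine.
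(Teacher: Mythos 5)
Your proposal is correct and follows essentially the same route as the paper's proof: the exit-time Markov chain of Proposition~\ref{lem:exit-time-chain} with its invariant measure $\pi$, the ergodic theorem combined with the telescoping identities and the renewal limit $\mathbf{k}(n)/n\to 1/\mathbb{E}_\pi[\e{2}-\e{1}]$, integrability of the increments deduced from transience (non-amenability/spectral radius when $A\subsetneq G_0$, the $\mathbb{Z}$-projection otherwise), and control of the not-yet-stabilised suffix via the polynomial growth of $\ell$. The only minor variations are that the paper obtains the moment bounds by showing the generating function $\mathcal{K}(z)$ has radius of convergence strictly larger than $1$ (Lemmas~\ref{lem:radius-of-convergence}, \ref{lem:K-convergence} and \ref{lem:sum-finite}) rather than by a stationary exponential tail estimate, and it disposes of the suffix term using $\Lambda_\kappa<\infty$ and the ergodic theorem for $\mathbf{i}_j^{\kappa}$ rather than a Borel--Cantelli $O(\log n)$ bound; both are equivalent in substance.
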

In particular, the formula holds also for the rate of escape w.r.t. the natural word length (that is, if $\ell(g_0)=\ell(t^{\pm 1})=1$ for all $g_0\in G_0$). We remark that   \cite{haissinsky-mathieu-mueller12} derived a similar formula for random walks on hyperbolic surface groups. 
The proof of this theorem is given in Section \ref{sec:exit-times}, where the main steps are as follows: we construct a positive-recurrent Markov chain which is derived from the random times when new pairs of letters in the prefix of $X_n$ stabilize, see Proposition \ref{lem:exit-time-chain}. 
 This Markov chain traces the random walk's path to infinity. The crucial point here is that these random times are no stopping times which destroys the Markov property of $(X_n)_{n\in\N_0}$ when conditioning on these random times. Having shown some necessary integrability property in Lemma \ref{lem:sum-finite}, we are able to derive  a formula for the rate of escape w.r.t. the natural word length (see Proposition \ref{thm:t-drift} and Corollary \ref{cor:speed-word-length}), from which we can finally deduce existence of $\lambda_\ell$ in Theorem \ref{thm:drift} and the formula in Theorem \ref{thm:alternative-drift-formula}. Let me remark that the theorem generalizes the result of \cite{gilch:08} for infinite $G_0$.
\par
The next main result is a central limit theorem for the word length w.r.t. $\ell$. For this purpose, we use the Markov chain introduced in Proposition \ref{lem:exit-time-chain} for the definition of regeneration times $(T_n)_{n\in\N_0}$ (defined in (\ref{def:regeneration-times})), which are special random times (no stopping times!) at which the random walk $(X_n)_{n\in\N_0}$ stabilizes further letters in its prefix in a specific way. 

\begin{theorem}\label{thm:clt}
Let $(X_n)_{n\in\N_0}$ be a transient random walk on $G$ governed by $\mu$ as defined in (\ref{def:mu}), and let $\ell$ be a length function of  polynomial growth.
Then the rate of escape w.r.t. $\ell$ satisfies 
$$
\frac{\ell(X_n)- n\cdot \lambda_\ell }{\sqrt{n}}\xrightarrow{\mathcal{D}} N(0,\sigma^2),
$$
where 
$\displaystyle \sigma^2=\frac{\mathbb{E}\bigl[\bigl(\ell(X_{T_1})-\ell(X_{T_0})-(T_1-T_0)\lambda_\ell\bigr)^2\bigr]}{\mathbb{E}[T_1-T_0]}$.
\end{theorem}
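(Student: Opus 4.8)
The plan is to exploit the regeneration times $(T_n)_{n\in\N_0}$ to cut the trajectory into independent blocks and then reduce the statement to the classical CLT for i.i.d.\ sums via a random time change. First I would record the block structure: by the defining property of the $(T_n)$ (see (\ref{def:regeneration-times})) the random walk, at each time $T_k$, has permanently stabilized a prefix of a prescribed shape, and — thanks to the Markov chain constructed in Proposition \ref{lem:exit-time-chain}, whose regeneration instants are exactly the $T_k$ — the evolution of $(X_n)$ after $T_k$, read along the stabilized skeleton, is an independent copy of the same mechanism. Consequently the increments $\xi_k := \bigl(T_{k+1}-T_k,\ \ell(X_{T_{k+1}})-\ell(X_{T_k})\bigr)$, $k\geq 1$, form an i.i.d.\ sequence (the initial block $k=0$ possibly having a different law, which is harmless). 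The same conditioning subtlety that already appears in the proof of Theorem \ref{thm:alternative-drift-formula} — the $T_k$ are not stopping times — must be handled here to obtain genuine independence rather than mere $1$-dependence.

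The analytic core is a pair of second-moment bounds: $\E\bigl[(T_1-T_0)^2\bigr]<\infty$ and $\E\bigl[\bigl(\ell(X_{T_1})-\ell(X_{T_0})\bigr)^2\bigr]<\infty$. The first follows from exponential tail estimates on the regeneration block length, which I expect to have available from the positive recurrence of the exit-time chain together with transience (in the spirit of Lemma \ref{lem:sum-finite}). The second is the delicate one: $\ell(X_{T_1})-\ell(X_{T_0})$ is a sum of weights $\ell(g_i)$ of the normal-form letters $g_i\in G_0$ freshly stabilized inside the block, and since $\ell$ only has polynomial growth, $\ell(g_i)\leq C\,|g_i|^{\kappa}$, one must show that the probability that a block stabilizes a letter $g_i$ with $|g_i|=m$ decays geometrically in $m$, uniformly; this geometric control against the polynomial factor $m^{2\kappa}$ yields finiteness of all moments of the block increment.

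With these in hand, set $Y_k := \ell(X_{T_{k+1}})-\ell(X_{T_k})-(T_{k+1}-T_k)\lambda_\ell$. The renewal–reward identity (which simultaneously re-derives $\lambda_\ell=\E[\ell(X_{T_1})-\ell(X_{T_0})]/\E[T_1-T_0]$, consistently with Theorem \ref{thm:alternative-drift-formula}) shows the $Y_k$, $k\geq 1$, are i.i.d., centered, with finite variance $\E[Y_1^2]$, so $m^{-1/2}\sum_{k=1}^m Y_k\xrightarrow{\mathcal{D}} N\bigl(0,\E[Y_1^2]\bigr)$ by the classical CLT. Let $N_n:=\max\{k:T_k\leq n\}$; the renewal law of large numbers gives $N_n/n\to 1/\E[T_1-T_0]$ a.s., and Anscombe's theorem (or Billingsley's random-change-of-time CLT) upgrades the block CLT to
$$
\frac{\ell(X_{T_{N_n}})-T_{N_n}\lambda_\ell}{\sqrt n}\xrightarrow{\mathcal{D}} N\!\left(0,\frac{\E[Y_1^2]}{\E[T_1-T_0]}\right)=N(0,\sigma^2).
$$
Finally I would control the terminal overshoot, showing $\bigl|\ell(X_n)-\ell(X_{T_{N_n}})\bigr|+\bigl|n-T_{N_n}\bigr|\,\lambda_\ell=o(\sqrt n)$ in probability: the left side is dominated by the size of the $(N_n+1)$-st block, and the second-moment bounds of the previous step force $\max_{k\leq N_n+1}|\xi_k|=o(\sqrt n)$ in probability; Slutsky's theorem then closes the argument.

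The step I expect to be the main obstacle is the second-moment estimate $\E\bigl[\bigl(\ell(X_{T_1})-\ell(X_{T_0})\bigr)^2\bigr]<\infty$: because $\ell$ is only assumed of polynomial growth, one has to combine the exponential control on the regeneration block length with a quantitative bound on how deep into $G_0$ the walk can travel between two consecutive regenerations, and feed the resulting geometric tail against the polynomial weight. A secondary, more technical point is making the independence of the regeneration blocks fully rigorous despite the $T_k$ not being stopping times, which requires the same careful path-decomposition used earlier in the paper.
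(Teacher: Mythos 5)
Your proposal is correct and follows essentially the same route as the paper's proof: regeneration times $T_i=\mathbf{e}_{\tau_i}$ built from returns of the exit-time chain to a fixed state, i.i.d.\ block increments (established by a path decomposition precisely because the $T_i$ are not stopping times), exponential moments of $\tau_1-\tau_0$ and $T_1-T_0$, Billingsley's random-change-of-time CLT combined with $\mathbf{t}(n)/n\to 1/\mathbb{E}[T_1-T_0]$, and Slutsky together with an overshoot estimate. The one step you single out as the main obstacle --- the second moment of $\ell(X_{T_1})-\ell(X_{T_0})$ --- is actually immediate in the paper's setup: each freshly stabilized letter satisfies $|g_j|\leq \mathbf{i}_j$, so $\widetilde{L}_1\leq C\,(T_1-T_0)^{\kappa}+M\,(\tau_1-\tau_0)$, and the exponential moments of the block length already deliver all moments without any separate geometric bound on the depth of excursions into $G_0$.
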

The proof is given in Section \ref{sec:clt}. The idea of the proof is to cut the trajectory of $(X_n)_{n\in\N_0}$ into i.i.d. subsequences. 
Lemmas \ref{lem:tau-expmom} and \ref{lem:T-expmom} show that the time increments between two consecutive regeneration times have exponential moments. From this follows then the proposed central limit theorem.
\par
The third main result demonstrates that $\lambda_\ell$ varies real-analytically in terms of probability measures of constant support. Let $S_0=\{s_1,\dots,s_{d}\}$ generate $G_0$ as a semigroup and denote by 
$$
\mathcal{P}_0(S_0)=\Bigl\lbrace (p_1,\dots,p_d) \,\Bigl|\, \forall i\in\{1,\dots,d\}: p_i>0, \sum_{j=1}^d p_j=1\Bigr\rbrace
$$ 
the set of all probability measures $\mu_0$ on $S_0$, where  $\mu_0(s_i)=p_i$ for $i\in\{1,\dots,d\}$.
Hence, we may regard $\lambda_\ell$ as a mapping  $(\mu_0,\alpha,p)\mapsto \lambda_\ell(\mu_0,\alpha,p)$.
\begin{theorem}\label{th:analyticity}
Let $S_0\subseteq G_0$ be finite and generating $G_0$ as a semi-group, and consider transient random walks on $G$ governed by probability measures of the form \mbox{$\mu=\alpha \mu_0+(1-\alpha)\bigl(p\delta_t+(1-p)\delta_{t^{-1}}\bigr)$} with $\mathrm{supp}(\mu_0)=S_0$.
Furthermore, let $\ell$ be a length function of at most polynomial growth.  Then the mapping
$$
\lambda_\ell: \mathcal{P}_0(S_0)\times (0,1) \times (0,1) \to\mathbb{R}: \mu=(\mu_0,\alpha,p)\mapsto \lambda_\ell(\mu_0,\alpha,p)
$$
is real-analytic.
\end{theorem}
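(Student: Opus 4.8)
The starting point is the drift formula \eqref{equ:alternative-drift-formula}. Since every parameter triple under consideration yields a transient random walk, the denominator $\E_\pi[\e{2}-\e{1}]$ is finite and strictly positive, so it suffices to prove that both
$$
(\mu_0,\alpha,p)\mapsto \E_\pi\bigl[\ell([X_{\e{2}}])-\ell([X_{\e{1}}])\bigr]
\qquad\text{and}\qquad
(\mu_0,\alpha,p)\mapsto \E_\pi[\e{2}-\e{1}]
$$
are real-analytic on $\mathcal{P}_0(S_0)\times(0,1)\times(0,1)$; after eliminating $p_d=1-\sum_{j<d}p_j$ we regard these as functions of the finitely many real variables $p_1,\dots,p_{d-1},\alpha,p$. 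A quotient of real-analytic functions with non-vanishing denominator is again real-analytic, and real-analyticity is a local property, so it is enough to produce near each admissible real parameter a holomorphic extension in a complex neighbourhood of it.

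The next step is to express these two expectations through the generating functions of Section \ref{sec:genfun}. The one-step transition probabilities of the exit-time Markov chain of Proposition \ref{lem:exit-time-chain}, its stationary distribution $\pi$ (see \eqref{def:pi}), and the conditional laws of the increments $\e{2}-\e{1}$ and $\ell([X_{\e{2}}])-\ell([X_{\e{1}}])$ can all be written in terms of the first-passage and return generating functions $F(\cdot\,|\,z)$, evaluated at $z=1$ together with their first derivative $\partial_z F(\cdot\,|\,1)$ — the derivative producing the expected \emph{time} increment and its $\ell$-weighted analogue the expected \emph{length} increment, with Lemma \ref{lem:sum-finite} supplying the finiteness that legitimates the differentiation. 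Because $A$ and $B$ are finite and the Cayley graph of $G$ is a tree of copies of $\mathcal{X}_0$ glued along these finite subgroups, a single level-transition only ``sees'' the finitely many boundary cosets attached to one copy of $\mathcal{X}_0$, so the functions $F(\cdot\,|\,z)$ satisfy a \emph{finite} system of polynomial equations whose coefficients are polynomials in $z$ and in $p_1,\dots,p_{d-1},\alpha,p$. I would then complexify the parameters and apply the analytic Implicit Function Theorem: transience ensures that this system extends analytically past $z=1$ and that its Jacobian in the unknowns at $z=1$ is of the form $\Id-Q$ with $Q$ substochastic of spectral radius strictly below $1$, hence invertible; consequently each $F(\cdot\,|\,1)$, each $\partial_z F(\cdot\,|\,1)$, and therefore the transition probabilities of the exit-time chain depend holomorphically on the parameters in a complex neighbourhood of every admissible real point, and $\pi$ — solving a linear system with these holomorphic coefficients — does as well.

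The genuinely delicate point, and the step I expect to be the main obstacle, is that when $G_0$ is infinite the alphabet $\mathcal{A}$ and hence the exit-time chain are \emph{infinite}, so the two expectations above (and, strictly speaking, $\pi$ itself) are infinite sums and analyticity cannot be concluded term by term. The plan is to establish, uniformly over a complex neighbourhood of a fixed admissible parameter, an exponential bound $|(\text{contribution of the $n$-th summand})|\le C\rho^n$ with $\rho<1$: for the $\ell$-weighted sums this combines the polynomial-growth hypothesis on $\ell$ with the exponential moments of the regeneration and exit increments from Lemmas \ref{lem:tau-expmom} and \ref{lem:T-expmom}, and the value $\rho<1$ comes from the stability of transience under small complex perturbation — when $A\subsetneq G_0$ non-amenability gives spectral radius $<1$ by \cite[Cor.~12.5]{woess}, while in the remaining case $A=B=G_0$, $p\neq\tfrac12$ the chain projects to a biased walk on $\Z$ with strictly positive drift. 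Given such locally uniform geometric control, the series of holomorphic functions converges locally uniformly, so by the Weierstrass convergence theorem its sum is holomorphic; restricting back to real parameters yields real-analyticity of the numerator and of the denominator, and hence of $\lambda_\ell$. One could equally run the argument through the regeneration times $(T_n)$ of \eqref{def:regeneration-times}, using a regeneration formula of the shape $\lambda_\ell=\E[\ell(X_{T_1})-\ell(X_{T_0})]/\E[T_1-T_0]$ together with the i.i.d.\ block decomposition, which has the advantage of avoiding the stationary distribution of a countable chain altogether.
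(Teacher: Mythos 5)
Your main route has a genuine gap at its central step. You claim that the first-passage generating functions satisfy a \emph{finite} system of polynomial equations whose coefficients are polynomials in $z$ and in the parameters, and you then invoke the analytic Implicit Function Theorem. This is the standard machinery for random walks on regular languages over \emph{finite} alphabets, but it fails here precisely because $G_0$ may be infinite: an excursion contributing to $\eta(tb|z)$ or $\xi(tb|z)$ can wander arbitrarily far inside a copy of $\mathcal{X}_0$ before touching $A\cup B$ again, so any closed system for these functions must involve Green-function-type quantities of the walk on the infinite group $G_0$, which are not algebraic in general (take $G_0=\Z^5$). There is no finite polynomial system with polynomial coefficients, so the Jacobian/IFT step has nothing to act on. The same problem infects your use of the formula from Theorem \ref{thm:alternative-drift-formula}: you would additionally need analyticity of the stationary distribution $\pi$ of the exit-time chain, which lives on the \emph{countable} state space $\mathbb{D}$ and solves an infinite linear system — exactly the difficulty your closing sentence proposes to avoid.

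That closing sentence is in fact the paper's proof. The argument runs through the regeneration times $(T_n)$ and the formula $\lambda_\ell=\mathbb{E}[\ell([X_{T_1}])-\ell([X_{T_0}])]/\mathbb{E}[T_1-T_0]$ of (\ref{equ:lambda-T-formula}), and the analyticity mechanism is not the IFT but the following elementary observation (see (\ref{equ:path-prob})--(\ref{equ:analytic1})): each $\mathbb{P}[T_1-T_0=n]$ is a sum over paths of length $n$, hence a polynomial with \emph{non-negative} coefficients in $p_1,\dots,p_d,\alpha,1-\alpha,p,1-p$ that is homogeneous of degree $n$; since $T_1-T_0$ has exponential moments (Lemma \ref{lem:T-expmom}), the generating function $\sum_n\mathbb{P}[T_1-T_0=n]z^n$ converges at some $z=1+\delta>1$, and absorbing the factor $(1+\delta)^n$ into the parameters shows that the multivariate power series converges on a genuine neighbourhood of the given parameter point — non-negativity of the coefficients makes the domination automatic, with no need for uniform complex estimates or Weierstrass. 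The ratios $\xi(\cdot)/\xi(\cdot)$ in the transition probabilities of Proposition \ref{lem:exit-time-chain} telescope to $1$ over a regeneration block (see (\ref{equ:Paths-T1-T0})), which is why the block probabilities are genuinely path sums. The numerator is handled the same way in Lemma \ref{lem:EellT0}, using polynomial growth of $\ell$ to control the weights. So: your instinct about where the difficulty lies (infinite alphabet, infinite sums) is right, and your fallback identifies the correct route, but the route you actually develop would not close.
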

For the proof of the theorem in Section \ref{sec:analyticity} we will use the formula for $\lambda_\ell$ given in \ref{equ:lambda-T-formula}. We show in Lemmas \ref{lem:ET0} and \ref{lem:EellT0} that both nominator and denominator can be rewritten as multivariate power series in terms of $\mu_0,\alpha,p$ with sufficiently large radii of convergence. In the same way we obtain our last main result:

\begin{theorem}\label{thm:variance-analytic}
The asymptotic variance $\sigma^2$ from Theorem \ref{thm:clt} varies real-analyti\-cally when considered as a multivariate power series, that is, the mapping 
$$
(\mu_0,\alpha,p)\mapsto \sigma^2=\sigma^2(\mu_0,\alpha,p)
$$
varies real-analytically.
\end{theorem}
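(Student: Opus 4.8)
The plan is to mimic the proof of Theorem \ref{th:analyticity}, replacing the linear regeneration-block functional appearing there by a quadratic one. Starting from the regeneration-time formula for $\lambda_\ell$ in \ref{equ:lambda-T-formula} and expanding the square in the numerator of $\sigma^2$ from Theorem \ref{thm:clt}, write, with $\Delta\ell := \ell(X_{T_1})-\ell(X_{T_0})$ and $\Delta T := T_1-T_0$,
\begin{align*}
\sigma^2 \cdot \mathbb{E}[T_1 - T_0] &= \mathbb{E}\bigl[(\Delta\ell)^2\bigr] \\
&\quad - 2\lambda_\ell\, \mathbb{E}\bigl[\Delta\ell\,\Delta T\bigr] + \lambda_\ell^2\, \mathbb{E}\bigl[(\Delta T)^2\bigr].
\end{align*}
Since $\lambda_\ell$ is real-analytic by Theorem \ref{th:analyticity}, and $\mathbb{E}[T_1-T_0]$ is real-analytic and bounded below (by $1$, say) as shown in Lemma \ref{lem:ET0}, it suffices to prove that the three expectations $\mathbb{E}[(\Delta\ell)^2]$, $\mathbb{E}[\Delta\ell\,\Delta T]$ and $\mathbb{E}[(\Delta T)^2]$ depend real-analytically on $(\mu_0,\alpha,p)$: sums, products and quotients with non-vanishing denominator of real-analytic functions are again real-analytic.

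The first ingredient is integrability with room to spare. By Lemmas \ref{lem:tau-expmom} and \ref{lem:T-expmom} the increment $\Delta T$ has an exponential moment, and in particular the number of letters of the prefix stabilised during one regeneration block has exponential tails. Since $\ell$ is of polynomial growth and the lengths $|g_0|$ of the $G_0$-blocks occurring between consecutive $t^{\pm1}$-letters have exponential tails (only a geometric number of $\mu_0$-steps occur between stable letters and $\mathrm{supp}(\mu_0)$ is finite), $\Delta\ell$ also has exponential moments. Hence all of $\mathbb{E}[(\Delta\ell)^2]$, $\mathbb{E}[\Delta\ell\,\Delta T]$, $\mathbb{E}[(\Delta T)^2]$ are finite, and the corresponding moment generating functions stay finite in a neighbourhood of the origin, uniformly for $(\mu_0,\alpha,p)$ in a compact subset of $\mathcal{P}_0(S_0)\times(0,1)\times(0,1)$.

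The second ingredient is the power-series representation, following the proofs of Lemmas \ref{lem:ET0} and \ref{lem:EellT0}. A regeneration block decomposes into a random, exponentially-tailed number of pieces governed by the positive-recurrent Markov chain of Proposition \ref{lem:exit-time-chain}; each piece contributes to $\Delta\ell$ and $\Delta T$ through first-passage and sojourn generating functions of the underlying walk evaluated at $z=1$. Transience together with the finite support of $\mu$ ensures that these generating functions and their $z$-derivatives extend analytically in the step parameters to a complex polydisc around $(\mu_0,\alpha,p)$ and remain finite at $z=1$ (the spectral-radius bound from non-amenability in the case $A\subsetneq G_0$, resp.\ the projection to $\mathbb{Z}$ in the amenable transient case, places $z=1$ strictly inside the relevant disc of convergence). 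One then tracks a \emph{bivariate} block generating function in two auxiliary variables recording accumulated length and accumulated time; differentiating it twice and evaluating at the relevant point yields $\mathbb{E}[(\Delta\ell)^2]$, $\mathbb{E}[\Delta\ell\,\Delta T]$ and $\mathbb{E}[(\Delta T)^2]$ as multivariate power series in $(\mu_0,\alpha,p)$ whose radii of convergence are bounded below on compacts, hence real-analytic.

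The main obstacle is precisely the quadratic nature of the block functional. In the drift proof only first moments of block functionals enter; here the square turns each expectation, at the level of the series expansion, into a double sum over the (random number of) pieces of a block, and one must check that this double sum still converges on a complex polydisc. The technical heart is therefore to re-run the estimates behind Lemmas \ref{lem:ET0} and \ref{lem:EellT0} with an additional polynomial weight in the block length, absorbing it using the exponential tails of the block length and of the per-piece $G_0$-excursion lengths established in the integrability step. Once this uniform convergence is secured, real-analyticity of $\sigma^2$ follows exactly as for $\lambda_\ell$ in Theorem \ref{th:analyticity}.
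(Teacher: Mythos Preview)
Your overall strategy---decompose $\sigma^2\cdot\mathbb{E}[\Delta T]$ into the three pieces $\mathbb{E}[(\Delta\ell)^2]$, $\mathbb{E}[\Delta\ell\,\Delta T]$, $\mathbb{E}[(\Delta T)^2]$ and treat each as in Lemmas \ref{lem:ET0} and \ref{lem:EellT0}---is correct and is precisely what the paper intends: its proof merely states that analyticity of $\sigma^2$ ``can be checked analogously to Lemmas \ref{lem:ET0} and \ref{lem:EellT0} \dots\ due to existence of exponential moments of $T_1-T_0$,'' without spelling out the details you supply.

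There is one technical slip. You assert that $\Delta\ell$ itself has exponential moments and then propose a \emph{bivariate} block generating function in accumulated length and accumulated time. But $\ell(g_j)\le C|g_j|^\kappa$, and for $\kappa>1$ the $\kappa$-th power of a geometric variable has only stretched-exponential tails; hence $\Delta\ell$ need not have any exponential moment, and a bivariate generating function tracking $\Delta\ell$ as an exponent need not converge near the evaluation point. The fix is to follow Lemma \ref{lem:EellT0} more literally: track only the \emph{time} variable $z$ and use the deterministic bound $\Delta\ell\le C(\Delta T)^\kappa+M\,\Delta T$ on the event $[\Delta T=n]$. Then, for real $z>0$,
\[
\mathbb{E}\bigl[z^{\Delta T}(\Delta\ell)^2\bigr]\ \le\ \sum_{n\ge 1}\mathbb{P}[\Delta T=n]\,z^n\,(Cn^\kappa+Mn)^2,
\]
and similarly for the mixed moment; these converge for $z$ in a neighbourhood of $1$ because $\sum_n\mathbb{P}[\Delta T=n]\,z^n$ has radius of convergence strictly larger than $1$ and the extra weight is polynomial in $n$. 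The path decomposition (\ref{equ:path-prob})--(\ref{equ:analytic1}) then yields real-analyticity of each of the three expectations exactly as before, with no new ideas required---which is the paper's point.
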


Concerning the rate of escape $\mathfrak{s}$ w.r.t. the natural graph metric, we obtain a special case if $A=B$ is normal in $G_0$:
\begin{corollary}
Assume that $A=B\trianglelefteq G_0$ and $\varphi=\mathrm{id}_A$. Then Theorems \ref{thm:clt}, \ref{th:analyticity} and \ref{thm:variance-analytic} hold also, if $\ell(g)=d(e,g)$, $g\in G$, is the distance of $g$ to $e$ w.r.t. the natural graph metric in the Cayley graph of $G$.
\end{corollary}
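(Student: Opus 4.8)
The plan is to reduce to the case of a \emph{trivial} amalgamated subgroup, for which the natural graph metric genuinely is a length function, and then to transfer the three assertions by a bounded comparison of metrics.

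First I would exploit the hypotheses. Since $\varphi=\mathrm{id}_A$, the defining relation $at=t\varphi(a)=ta$ shows that $t$ centralises $A$; together with $A\trianglelefteq G_0$ this gives $A\trianglelefteq G$, so $\bar G:=G/A$ is well defined. Adjoining the relations $a=e_0$ ($a\in A$) to the presentation of $G$ collapses every relation $at=ta$ to triviality, whence
$$
\bar G=\bigl\langle\,\bar S_0,\,t\,\bigm|\,\bar R_0\,\bigr\rangle\cong(G_0/A)\ast\Z,
$$
where $\bar G_0:=G_0/A$ and $\bar S_0$ is the image of $S_0$. Thus $\bar G$ is again an HNN extension in the sense of this paper — namely the HNN extension of $\bar G_0$ over the trivial subgroup with the identity isomorphism. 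In this extension a transversal of $\bar G_0/\{\bar e\}$ is all of $\bar G_0$, so the normal form of an element is its free-product normal form, and one checks directly that the graph metric of $\bar G$ (with respect to $\bar S_0\cup\{t,t^{-1}\}$) is exactly the length function induced by $\bar\ell_0:=|\cdot|_{\bar G_0}$ and $\bar\ell(t^{\pm1}):=1$, namely
$$
d_{\bar G}(\bar e,\bar g)=\sum_{i=1}^{n}\bigl(|\bar g_i|_{\bar G_0}+1\bigr)+|\bar g_{n+1}|_{\bar G_0}
$$
for $\bar g=\bar g_1 t_1\cdots\bar g_n t_n\bar g_{n+1}$ in normal form; this $\bar\ell$ has linear, hence polynomial, growth.

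Next I would push the walk down. The projection $\pi\colon G\to\bar G$ maps $X_n$ to $\bar X_n:=\pi(X_n)$, which is the random walk on $\bar G$ governed by $\bar\mu=\alpha\bar\mu_0+(1-\alpha)\bigl(p\,\delta_t+(1-p)\,\delta_{t^{-1}}\bigr)$ with $\bar\mu_0$ the image of $\mu_0$ under $\pi$. It is transient: by Lemma~\ref{lem:recurrence} applied to $\bar G$, recurrence would force $\bar G_0=\{\bar e\}$ and $p=\tfrac12$; but $\bar G_0=\{\bar e\}$ means $G_0=A=B$, and then transience of $(X_n)_{n\in\N_0}$ already forces $p\neq\tfrac12$. (The set $\bar S_0$ may contain $\bar e$, but the framework of the paper accommodates this without change.) Applying Theorems~\ref{thm:clt}, \ref{th:analyticity} and \ref{thm:variance-analytic} to $(\bar X_n)_{n\in\N_0}$ and $\bar\ell$ then gives the drift $\lambda_{\bar\ell}$, the central limit theorem $\bigl(d_{\bar G}(\bar e,\bar X_n)-n\lambda_{\bar\ell}\bigr)/\sqrt n\xrightarrow{\mathcal{D}}N(0,\sigma^2)$, and real-analyticity of both $\lambda_{\bar\ell}$ and $\sigma^2$ on $\mathcal P_0(\bar S_0)\times(0,1)\times(0,1)$.

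Finally I would transfer. One has the uniform comparison
$$
d_{\bar G}\bigl(\bar e,\pi(g)\bigr)\le d(e,g)\le d_{\bar G}\bigl(\bar e,\pi(g)\bigr)+C\qquad(g\in G),
$$
with $C:=\max_{a\in A}d(e,a)<\infty$: the left inequality because $\pi$ is $1$-Lipschitz, the right one by lifting a $\bar G$-geodesic word of $\pi(g)$ to a word in $G$ of the same length and correcting by a kernel element, which lies in $A$. Evaluated along the trajectory this yields $|d(e,X_n)-d_{\bar G}(\bar e,\bar X_n)|\le C$ for all $n$, so $\mathfrak s=\lim_n d(e,X_n)/n=\lambda_{\bar\ell}$, and, dividing by $\sqrt n$ and applying Slutsky's theorem, $\bigl(d(e,X_n)-n\,\mathfrak s\bigr)/\sqrt n\xrightarrow{\mathcal{D}}N(0,\sigma^2)$ with the \emph{same} $\sigma^2$. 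Since $\mu_0\mapsto\bar\mu_0$ is the restriction of a linear map carrying $\mathcal P_0(S_0)$ into $\mathcal P_0(\bar S_0)$, and hence real-analytic, composing it with the real-analytic maps $(\bar\mu_0,\alpha,p)\mapsto\lambda_{\bar\ell}$ and $(\bar\mu_0,\alpha,p)\mapsto\sigma^2$ shows that $(\mu_0,\alpha,p)\mapsto\mathfrak s$ and $(\mu_0,\alpha,p)\mapsto\sigma^2$ are real-analytic on $\mathcal P_0(S_0)\times(0,1)\times(0,1)$. This is exactly the content of Theorems~\ref{thm:clt}, \ref{th:analyticity} and \ref{thm:variance-analytic} for $\ell(g)=d(e,g)$.

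The core idea — identifying $G/A$ with the free product $(G_0/A)\ast\Z$, i.e.\ with a trivial-amalgam HNN extension to which the main theorems apply verbatim — is short; I expect the real work to be bookkeeping: verifying that $\bar G$ with the measure $\bar\mu$ satisfies \emph{all} standing assumptions (transience, including the degenerate case $G_0=A=B$, and the polynomial-growth condition on $\bar\ell$), and that the comparison above holds with a constant \emph{independent of $g$}, which is what preserves the $\sqrt n$-scaling of the central limit theorem under the transfer.
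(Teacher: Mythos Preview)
Your argument is correct and shares the paper's core idea: both recognise that $G/A\cong(G_0/A)\ast\Z$ and pass to the quotient. The paper's short proof cites \cite{gilch:07} for free products and then asserts that the graph metric on $G$ itself equals the length function induced by $\ell(g_0)=|g_0|_{G_0}$ and $\ell(t^{\pm1})=1$, so that the main theorems apply directly on $G$. Your route instead applies the present paper's theorems to the projected walk on $\bar G$ (viewed as an HNN extension over the trivial subgroup) and transfers back via the uniform estimate $\bigl|d(e,g)-d_{\bar G}(\bar e,\pi(g))\bigr|\le\max_{a\in A}d(e,a)$. Your version is in fact the more careful one: the paper's assertion that $d(e,\cdot)$ coincides with the normal-form length function is not literally true in general, since the final letter $g_{n+1}$ in a normal form may lie in $A$ and carry large $G_0$-length while contributing nothing to the actual graph distance (for instance, with $G_0=\Z/6\Z$, $A=\{e_0,s^3\}$ and $S_0=\{s,s^{-1}\}$ one finds $d(e,s^4t)=3$ whereas the normal form $s\cdot t\cdot s^3$ gives $\ell=5$). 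A bounded additive comparison---which is all that is needed for both the drift and the $\sqrt n$-scaled CLT via Slutsky---is the right formulation, and you supply it. What your approach buys is self-containment (no external reference) and robustness; what the paper's sketch buys is brevity.
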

\begin{proof}
It is easy to show that $G/A$ is isomorphic to the free product $(G_0/A)\ast \mathbb{Z}$. In this case one can project the random walk $(X_n)_{n\in\N_0}$ onto $G/A$, for which a formula for the rate of escape w.r.t. the natural graph metric is given in \cite{gilch:07}. If $\ell: G_0\to \N$ describes the distance of the elements of $G_0$ to $e_0$ in $G_0$ w.r.t. the natural graph metric and if we set $\ell(t^{\pm 1}):=1$, then the extension of $\ell(\cdot)$ to $G$ describes the distance of any $g\in G$ to $e$ w.r.t. the natural graph metric in the associated Cayley graph of $G$.
\end{proof}


\section{Generating Functions}
\label{sec:genfun}

In this section we introduce several important probability generating functions, which are power series with some probabilities of interest as coefficients. These generating functions will  play a technical key role in our proofs.
\par
For $x,y\in G$ and $z\in\mathbb{C}$, the \textit{Green function} is defined as
$$
G(x,y|z):=\sum_{n\geq 0} p^{(n)}(x,y)\,z^n.
$$
For any $M\subseteq G_0$, we write $tM:=\{tm\mid m\in M\}$ and $t^{-1}M:=\{t^{-1}m\mid m\in M\}$. 
For $a\in A$, $b\in B$, define the \textit{generating functions w.r.t. the first visit of $G_0$} when starting at $tb$,  or at $t^{-1}a$ respectively,
\begin{eqnarray*}
\eta(tb|z) &:=& \sum_{n\geq 1} \mathbb{P}_{tb}\bigl[X_n\in G_0,X_{n-1}\in tB,\forall m\in\{1,\dots,n-2\}:X_m\notin G_0\bigr]z^n,\\
\eta(t^{-1}a|z)&:=& \sum_{n\geq 1} \mathbb{P}_{t^{-1}a}\left[\begin{array}{c} X_n\in G_0, X_{n-1}\in t^{-1}A,\\ \forall m\in\{1,\dots,n-2\}:X_m\notin G_0\end{array}\right]z^n.
\end{eqnarray*}
Furthermore, we define
\begin{eqnarray*}
\xi(tb|z)&:= &1-\eta(tb|z),\\
\xi(t^{-1}a|z)&:= &1-\eta(t^{-1}a|z).
\end{eqnarray*}
In particular, we have
\begin{eqnarray*}
\xi(tb)&:= & \xi(tb|1)= \mathbb{P}_{tb}[\forall n\in \N: X_n\notin G_0]= \mathbb{P}_{tb}[\forall n\in \N: X_n\notin A],\\
\xi(t^{-1}a)&:= & \xi(t^{-1}a|1)= \mathbb{P}_{t^{-1}a}[\forall n\in \N: X_n\notin G_0]= \mathbb{P}_{t^{-1}a}[\forall n\in \N: X_n\notin B].
\end{eqnarray*}

Observe that all paths from $tb$ to $G_0$ have to pass through $A$: in order to walk from any $tg$, where $g\in G_0$, to $g_0\in G_0$ one has to eliminate the $t$-letter, which is only possible if $g\in B$; in this case
$$
tgt^{-1} = tt^{-1}\varphi^{-1}(g)=\varphi^{-1}(g)\in A.
$$
Analogously, each path from $t^{-1}a$ to $G_0$ has to pass through $B$.

%

\begin{lemma}\label{lem:xi>0}
Assume $A,B\subsetneq G_0$. Then we have for 
 all $a\in A$ and  $b\in B$:
$$
\xi(tb)>0 \ \textrm{ and } \ \xi(t^{-1}a)>0.
$$
\end{lemma}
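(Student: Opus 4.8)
The plan is to show that from $tb$ there is a positive-probability event on which the walk never returns to $G_0$ (equivalently, never returns to $A$), and similarly from $t^{-1}a$. The key observation, already recorded in the excerpt, is that any path from $tb$ back to $G_0$ must pass through $A$ first: to delete the stable letter $t$ one must be at a vertex of the form $tg$ with $g\in B$, and then $tgt^{-1}=\varphi^{-1}(g)\in A$. So it suffices to exhibit, with positive probability, an infinite trajectory started at $tb$ that avoids $A$ forever. Since $A\subsetneq G_0$, the quotient-of-cosets structure gives room to escape: pick a generator $s\in S_0$ (so $\mu_0(s)>0$, hence $\mu(s)>0$) together with a coset representative $x\in X\setminus\{e_0\}$ such that the first new copy of $\mathcal{X}_0$ attached along the $t$-edge at $tb$ can be entered and departed from while staying off $A$. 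More robustly: from $tb$ apply one step $t$ (probability $(1-\alpha)p>0$) to reach $tbt$; in normal form $tbt = t\,\varphi^{-1}(b)\,t$, which already lies two $t$-letters deep. By non-amenability ($A\subsetneq G_0$), the Green function is finite and the walk is transient, so with positive probability the walk started at $tbt$ never returns to the sphere of words of $t$-length $\le 1$ — in particular never returns to $G_0$. Intersecting with the first step gives $\xi(tb)>0$.

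The cleanest way to make the escape step rigorous is via the transience already established in Lemma \ref{lem:recurrence} together with a geometric/combinatorial separation argument. Consider the map $\psi\colon G\to\Z$ sending a normal form $g_1t_1\cdots g_nt_ng_{n+1}$ to $\sum_{i=1}^n \varepsilon_i$ where $t_i=t^{\varepsilon_i}$ (the "$t$-exponent sum", i.e. the image in the abelianization-type quotient $G\to\Z$ killing $G_0$). This is a group homomorphism, so $\psi(X_n)$ is an ordinary (lazy) random walk on $\Z$ with step distribution $\alpha\,\delta_0+(1-\alpha)(p\,\delta_{+1}+(1-p)\,\delta_{-1})$. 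Starting the walk at $tb$ we have $\psi(tb)=1$. Now observe: every element of $G_0$ has $\psi$-value $0$, but the converse fails, so hitting $\psi=0$ is necessary but not sufficient for returning to $G_0$. That alone is not enough; but combined with the pass-through-$A$ observation it is: a return to $G_0$ from $tb$ forces the walk, at the last moment before entering $G_0$, to be at a vertex of $t$-length $1$ lying in $tB$, with $\psi=1$, and then step to $\psi=0$. So it suffices to show that with positive probability, after the first step into $tbt$ (where $\psi=2$), the walk $\psi(X_n)$ never returns to the value $1$. If $p\ne 1/2$ and $p>1/2$ this is transparent from transience of biased random walk on $\Z$; if $p\le 1/2$ one instead escapes "downward" — but downward escape from $tb$ means deleting $t$ and passing through $A$, which is exactly what we want to avoid. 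This asymmetry is the real subtlety.

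To handle all $p\in(0,1)$ uniformly, I would argue \emph{inside the attached copy of $\mathcal{X}_0$} rather than via $\psi$ alone. From $tb$, the walk can enter the new copy of $\calX_0$ glued at the $t$-edge, where $B$ is identified with the old vertices $t\varphi(a)$; equivalently we are looking at a copy of the base group $G_0$ sitting "above" the edge, in which the subgroup playing the role of the gluing locus is $A$ again (by the relation $at = t\varphi(a)$, with $A\subsetneq G_0$). Since $A$ is a \emph{finite} subgroup and $G_0$ is infinite (it contains $A$ properly and is finitely generated; if $G_0$ were finite we'd still have $|G_0/A|\ge 2$, which is all we need), there is a nonempty portion of this copy disjoint from $A$, and from within it the walk has positive probability — using transience of $(X_n)$ guaranteed by Lemma \ref{lem:recurrence} since $A\subsetneq G_0$ — of drifting off into the further-attached copies and never coming back to $A$. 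Concretely: fix a finite path $e\to w$ in the Cayley graph with $w$ a normal form of $t$-length $\ge 2$ starting $tb t\cdots$, so $p^{(k)}(tb, w)>0$ for some $k$; by transience $\Prob_w[\forall n: X_n \notin A]>0$ once $w$ is chosen far enough inside the attached copy that every path from $w$ to $A$ must cross a fixed finite cutset, and by transience each such crossing has probability bounded away from $1$ — more simply, $F(w,A|1)<1$ because the walk is transient and $w$ can be taken with arbitrarily large distance to $A$, so $F(w,A|1)\to 0$. The main obstacle, and the one point deserving care in the writeup, is precisely this last claim — that $\sup_{w}F(w,A|1)<1$, or rather that for suitable $w$ it is $<1$ — which follows from transience of the walk on the non-amenable group $G$ (spectral radius $<1$, as used repeatedly in Lemmas \ref{lem:recurrence} and in the paragraph after Example \ref{ex:example}) together with finiteness of $A$: a recurrent-to-$A$ scenario from every nearby vertex would force recurrence of the walk, contradicting transience. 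The symmetric statement for $t^{-1}a$ follows by the completely analogous argument with the roles of $t,t^{-1}$, $A,B$, and $X,Y$ interchanged.
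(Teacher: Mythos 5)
Your structural observations are right --- returns from $tb$ to $G_0$ must funnel through the finite set $A$, and the $\Z$-projection $\psi$ only settles the case $p>\frac12$ --- but the step you yourself flag as ``the one point deserving care'' is a genuine gap, and neither of the two justifications you offer closes it. (a) The claim that $F(w,A|1)\to 0$ as $d(w,A)\to\infty$ is false for general transient walks (a walk on $\Z$ with drift towards the target hits it with probability $1$ from arbitrarily far away), and non-amenability of $G$ does not rescue it without a uniform bound of the form $p^{(n)}(w,a)\le C\varrho^n$ with $C$ independent of $w$, which is not automatic for non-symmetric $\mu$. The worry is not hypothetical: for $p<\frac12$ the walk drifts, in the $\psi$-coordinate, back towards $A$ from inside the $t$-branch, so the decay of these hitting probabilities is precisely what is in question. (b) The fallback ``a recurrent-to-$A$ scenario from every nearby vertex would force recurrence'' proves too little: if every vertex of the branch above $tB$ returned to $A$ almost surely, the walk would \emph{not} be forced to be recurrent --- it could still escape to infinity through the $t^{-1}$-branches. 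That contradiction argument therefore only shows that \emph{some} $\xi(tb_0)$ or \emph{some} $\xi(t^{-1}a_0)$ is positive, not that the particular quantities you need are.

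The paper closes exactly this gap in two moves. First it assumes that \emph{all} of the $\xi(tb),\xi(t^{-1}a)$ vanish and derives that $G_0$, hence $tG_0$ with positive probability, hence $A$, is visited infinitely often, contradicting transience; this yields positivity of at least one $\xi$. Second --- and this is the step missing from your proposal --- it propagates positivity to every $b\in B$ and every $a\in A$ by explicit connecting paths: irreducibility of $\mu_0$ inside $tG_0$ gives $\xi(tb)\ge \alpha^{n_0}\mu_0^{(n_0)}(b^{-1}b_0)\,\xi(tb_0)$, and for $\xi(t^{-1}a)$ one walks inside $t^{-1}G_0$ to $t^{-1}x\varphi^{-1}(b_0)$ with $x\in X\setminus\{e_0\}$ (so that the subsequent $t$-step does not cancel) and then appends $t$ to reach $t^{-1}xtb_0$, whose forward branch is a translate of the one above $tb_0$. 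If you add this propagation step, your argument becomes essentially the paper's.
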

\begin{proof}
Since the random walk $(X_n)_{n\in\N_0}$ on $G$ is assumed to be transient and $A$ and $B$ are finite, we have
\begin{equation}\label{equ:A-visits}
\mathbb{P}[A \textrm{ is visited infinitely often}]= \mathbb{P}[B \textrm{ is visited infinitely often}]=0.
\end{equation}
Assume now for a moment that $\xi(tb)=0$ and $\xi(t^{-1}a)=0$ for all $a\in A, b\in B$. This implies that $\eta(tb|1)=\eta(t^{-1}a|1)=1$ for all $a\in A$ and $b\in B$. Hence, for all $x\in X$, $y\in Y$, we have
$$
\mathbb{P}_{xtb}[\exists n\in\N: X_n\in G_0] = \eta(tb)=1=
\eta(t^{-1}a)=\mathbb{P}_{yt^{-1}a}[\exists n\in\N: X_n\in G_0];
$$
that is, every time when the random leaves $G_0$ to some point $xtb$ or $yt^{-1}a$, it returns almost surely to $G_0$. This gives together with vertex-transitivity of the random walk:
$$
\mathbb{P}[G_0 \textrm{ is visited infinitely often}]=\mathbb{P}_t[tG_0 \textrm{ is visited infinitely often}]=1.
$$
This in turn yields that 
\begin{eqnarray*}
\mathbb{P}[tG_0 \textrm{ is visited infinitely often}] &\geq&
\mathbb{P}[X_1=t, tG_0 \textrm{ is visited infinitely often}]\\
&=&  (1-\alpha)\cdot p \cdot \mathbb{P}_t[tG_0 \textrm{ is visited inf. often}]>0.
\end{eqnarray*}
Therefore, the event that both $G$ and $tG_0$ are visited infinitely often has positive probability. Since every path from $tG_0$ to $G_0$ has to pass through $A$, the event that  $A$ is visited infinitely often has also positive probability, which now gives a contradiction to the transience behaviour in (\ref{equ:A-visits}).
\par
Assume now that $\xi(tb_0)>0$ for some $b_0\in B$ and let be $b\in B$. Then, due to irreducibility of $\mu_0$ there is some $n_0\in\N$ with $p_0^{(n_0)}(b,b_0)=\mu_0^{(n_0)}(b^{-1}b_0)>0$. This yields:
\begin{eqnarray*}
\xi(tb)&\geq & \mathbb{P}_{tb}[X_1,\dots,X_{n_0-1}\in tG_0,X_{n_0}=tb_0,\forall n\geq 1: X_n\notin G_0]\\
&\geq&  \alpha^{n_0}p_0^{(n_0)}(b,b_0)\xi(tb_0)>0.
\end{eqnarray*}
Choose now any $x\in X\setminus\{e_0\}$ (observe that $A\subsetneq G_0$ implies $|X|\geq 2$) and let be $a\in A$.
Then there is some $n_1\in\N$ with $p_0^{(n_1)}\bigl(a,x\varphi^{-1}(b_0)\bigr)>0$. We bound $\xi(t^{-1}a)$ by paths which start at $t^{-1}a$, go directly to $t^{-1}x$, then to $t^{-1}xt$ without any further modification of the first three letters afterwards:\begin{eqnarray*}
\xi(t^{-1}a)& \geq& \mathbb{P}_{t^{-1}a}\left[\substack{X_1,\dots,X_{n_1-1}\in t^{-1}G_0,X_{n_1}=t^{-1}x\varphi^{-1}(b_0),\\ X_{n_1+1}=t^{-1}x\varphi^{-1}(b_0)t,\forall n\geq n_1+1: X_n\notin t^{-1}G_0}\right]\\
&\geq & \alpha^{n_1}p_0^{(n_1)}\bigl(a,x\varphi^{-1}(b_0)\bigr) \cdot (1-\alpha)\cdot p\cdot \xi(tb_0)>0.
\end{eqnarray*}
Here, recall that $\varphi^{-1}(b_0)t=tb_0$. This finishes the proof.
%
%
\end{proof}


An analogous statement is obtained in the remaining case for transient random walks.
\begin{lemma}\label{lem:xi>0-2}
Consider the case $A=B=G_0$ and $p\neq \frac12$. Let be $a,b\in G_0$. Then $\xi(tb)>0$ and $\xi(t^{-1}a)=0$, if $p>\frac12$,  and $\xi(tb)=0$ and $\xi(t^{-1}a)>0$, if $p<\frac12$.
\end{lemma}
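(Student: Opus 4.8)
## Proof Strategy for Lemma~\ref{lem:xi>0-2}

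The plan is to reduce everything to the one-dimensional random walk obtained by the projection used in the proof of Lemma~\ref{lem:recurrence}. When $A=B=G_0$, every element of $G$ has normal form $t^n g_0$ or $t^{-n}g_0$ with $n\in\N_0$ and $g_0\in G_0$, so the map $\psi:G\to\Z$ sending $t^n g_0\mapsto n$ and $t^{-n}g_0\mapsto -n$ is well-defined, and $\bigl(\psi(X_m)\bigr)_{m\in\N_0}$ is a delayed simple random walk on $\Z$ which increments by $+1$ with probability $(1-\alpha)p$, by $-1$ with probability $(1-\alpha)(1-p)$, and stays put (the $G_0$-steps) with probability $\alpha$. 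Its drift is $(1-\alpha)(2p-1)$, which is strictly positive when $p>\tfrac12$ and strictly negative when $p<\tfrac12$.

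Now observe the key translation: starting from $tb$ (which has $\psi$-value $1$), the random walk visits $G_0$ at some time if and only if $\psi$ hits $0$; and since $A=B=G_0$, passing through $A$ and reaching $G_0$ coincide, so the auxiliary conditions ``$X_{n-1}\in tB$'' in the definition of $\eta(tb|z)$ are automatic. Hence
$$
\eta(tb|1)=\Prob_{tb}[\exists n\in\N: X_n\in G_0]=\Prob_1\bigl[\exists m\in\N:\psi(X_m)=0\bigr],
$$
and $\xi(tb)=1-\eta(tb|1)$ is exactly the escape probability of the delayed simple random walk from $1$ without returning to $0$. Symmetrically, $\xi(t^{-1}a)$ equals the probability that the projected walk started at $-1$ never hits $0$. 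For $p>\tfrac12$ the projected walk has positive drift, so from $1$ it escapes to $+\infty$ with positive probability (standard gambler's-ruin / first-passage computation for a $\{+1,0,-1\}$-walk: the probability of ever returning to $0$ from $1$ is $(1-p)/p<1$), giving $\xi(tb)>0$; meanwhile from $-1$ the walk drifts toward $+\infty$ and therefore hits $0$ almost surely, giving $\xi(t^{-1}a)=0$. The case $p<\tfrac12$ is identical after swapping the roles of $t$ and $t^{-1}$ (equivalently, $p\leftrightarrow 1-p$). Note that the values are independent of the particular $a,b\in G_0$, since all of $G_0$ sits at the same $\psi$-level, so no irreducibility argument in $G_0$ is needed here, unlike in Lemma~\ref{lem:xi>0}.

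The only mild subtlety — and the step I would be most careful with — is the bookkeeping of the delay steps and the precise event in the definition of $\eta$: one must check that the ``$X_{n-1}\in tB$'' respectively ``$X_{n-1}\in t^{-1}A$'' clause and the requirement $X_m\notin G_0$ for intermediate $m$ correspond exactly, under $\psi$, to ``first hitting time of $0$'', with no off-by-one issue caused by the last step $t^{\pm1}e_0t^{\mp1}=e_0$ cancellation. Since $A=B=G_0$ this is immediate, but it is worth stating explicitly. Everything else is the classical first-passage analysis of a lazy nearest-neighbour walk on $\Z$, which I would simply quote (e.g.\ from \cite{woess}) rather than rederive.
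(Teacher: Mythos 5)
Your proposal is correct and follows essentially the same route as the paper: both reduce to the projection $\psi$ onto $\Z$ from the proof of Lemma \ref{lem:recurrence} and use the drift of the resulting delayed simple random walk. The paper simply cites almost sure convergence of $\psi(X_n)$ to $+\infty$ to conclude both claims at once, whereas you additionally spell out the gambler's-ruin return probability $(1-p)/p<1$; this extra detail is harmless and the two arguments are equivalent.
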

\begin{proof}
In the case $p>\frac12$ the stochastic process $\bigl(\psi(X_n)\bigr)_{n\in\N_0}$ from the proof of Lemma \ref{lem:recurrence} tends to $+\infty$ almost surely, yielding $\xi(tb)>0$ and $\xi(t^{-1}a)=0$ for all $a,b\in G_0=A=B$. The case $p<\frac12$ follows by symmetry.
\end{proof}

The following property will be essential in the proofs of the upcoming sections.

\begin{lemma}\label{lem:radius-of-convergence}
The common radius of convergence $R$ of $G(g_1,g_1|z)$, $g_1,g_2\in G$, is strictly bigger than $1$. Moreover, the generating functions $\eta(\cdot|z)$ and $\xi(\cdot|z)$ have also radii of convergence of at least $R$. 
\end{lemma}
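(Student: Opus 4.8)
The plan is to deduce the claim from non-amenability (or, in the degenerate transient case, from a direct projection argument), via the standard fact that for a random walk on a non-amenable group the spectral radius $\rho := \limsup_{n} p^{(n)}(e,e)^{1/n}$ is strictly less than $1$. First I would recall that, by group-invariance, $G(g_1,g_2|z) = G(e, g_1^{-1}g_2 | z)$, so it suffices to control the single Green function $G(e,g|z)$ for $g \in G$; and since $p^{(n)}(e,g) \le p^{(2n)}(e,e)^{1/2}$ by Cauchy--Schwarz (the standard $p^{(n)}(x,y)^2 \le p^{(2n)}(x,x)\cdot(\text{reversibility-type bound})$, here using that $\mu$ and $\check\mu$ have the same support and the walk is irreducible), the radius of convergence of $G(e,g|z)$ is bounded below by $1/\rho$ uniformly in $g$. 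Hence it is enough to show $\rho < 1$. In the case $A \subsetneq G_0$, the group $G$ is non-amenable (this was already observed after Example \ref{ex:example} and reused in the proof of Lemma \ref{lem:recurrence}), and Kesten's criterion (e.g.\ \cite[Cor.~12.5]{woess}) gives $\rho < 1$ directly. In the remaining transient case $A = B = G_0$ with $p \ne \tfrac12$, the walk is not non-amenable, so here I would instead argue directly: project via $\psi$ onto $\Z$ as in the proof of Lemma \ref{lem:recurrence}; the $\Z$-component performs a biased (delayed) nearest-neighbour walk, whose Green function has radius of convergence $>1$, and the $G_0$-component contributes only a finite-group factor, so $G(e,g|z)$ still converges for some $z > 1$. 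Either way we obtain a common $R > 1$.

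Next I would transfer this to $\eta$ and $\xi$. The functions $\eta(tb|z)$ and $\eta(t^{-1}a|z)$ are first-visit generating functions: each coefficient $[z^n]\eta(tb|z)$ is the probability of a specific event (first return to $G_0$ at step $n$, through $tB$), which is dominated coefficientwise by the return probability $p^{(n-?)}(\cdot,\cdot)$ along an associated path, or more cleanly by $p^{(n)}(tb, G_0)\le \sum_{g_0\in G_0} p^{(n)}(tb,g_0)$; summing over the finitely many elements of $G_0$ reachable in bounded excursions is not quite enough since $G_0$ is infinite, so instead I would use the standard domination $[z^n]\eta(x|z) \le [z^n]G(x,x|z)$ (a first-visit series is dominated term-by-term by the full Green function at the starting point, since every first-visit path is in particular an excursion). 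This immediately gives that $\eta(tb|z)$ and $\eta(t^{-1}a|z)$ have radius of convergence at least $R$; and $\xi(\cdot|z) = 1 - \eta(\cdot|z)$ has the same radius.

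The main obstacle is the degenerate transient case $A = B = G_0$, $p \ne \tfrac12$, where non-amenability fails and one cannot simply cite Kesten. Handling it requires the explicit projection argument, being a little careful that the walk on $G$ in this case really does decompose so that $G(e,g|z)$ inherits a radius of convergence strictly above $1$ from the biased walk on $\Z$ — concretely, writing $g$ in its normal form $t^k g_0$ (resp.\ $t^{-k}g_0$), one relates $p^{(n)}(e, t^k g_0)$ to the $\Z$-walk's transition probability of going from $0$ to $k$ in $n$ steps times a bounded combinatorial factor coming from the finite group $G_0$, and then uses that the biased $\Z$-walk's Green function $\sum_n \P_0[\Z_n = k]\,z^n$ converges for $|z|$ up to $1/\rho_{\Z} > 1$. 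A secondary (minor) technical point is the Cauchy--Schwarz step used to make the bound on the radius of $G(e,g|z)$ uniform in $g$: one should note that irreducibility of $\mu$ on $G$ (guaranteed since $\langle\mathrm{supp}(\mu)\rangle = G$) ensures $p^{(2n)}(e,e) > 0$ for all large $n$ and that the bound $p^{(n)}(e,g)^2 \le C_g\, p^{(2n)}(e,e)$ holds with $C_g$ not affecting the exponential growth rate, so the common radius $R = 1/\rho > 1$ works simultaneously for all $g_1, g_2 \in G$.
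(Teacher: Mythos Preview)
Your treatment of the Green-function part is essentially the paper's approach: non-amenability plus Kesten's criterion when $A \subsetneq G_0$, and a projection onto $\Z$ in the degenerate case $A = B = G_0$, $p \neq \tfrac12$. (The paper carries out the latter in a fairly detailed appendix lemma, so your sketch would need fleshing out, but the strategy is the same.) The fact that all $G(g_1,g_2|z)$ share the same radius is standard from irreducibility; your Cauchy--Schwarz remark is unnecessary but harmless.

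Your argument for $\eta$, however, has a genuine gap. The claimed domination
\[
[z^n]\eta(tb|z) \;\le\; [z^n]G(tb,tb|z)
\]
is not correct, and the justification ``every first-visit path is in particular an excursion'' fails: a path contributing to $\eta(tb|z)$ starts at $tb$ and \emph{ends in $G_0$}, not at $tb$, so it is not an excursion. There is no general reason why first-passage probabilities to a set should be bounded by return probabilities to the starting point.

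The fix is precisely the observation you raised and then dismissed. Although $G_0$ may be infinite, any path from $tb$ that first enters $G_0$ must do so through the \emph{finite} set $A$: to cancel the leading $t$ one must be at some $tb'\in tB$ and step by $t^{-1}$, landing at $\varphi^{-1}(b')\in A$. Hence for real $z>0$,
\[
\eta(tb|z) \;\le\; \sum_{a\in A} G(tb,a|z),
\]
a finite sum of Green functions, each with radius of convergence $R$. The paper makes exactly this point, phrased as a lower bound on $\sum_{a\in A}G(e,a|z)$ in terms of $\eta(tb|z)$ obtained by prepending a fixed finite path from $e$ to $tb$. The symmetric argument with $B$ handles $\eta(t^{-1}a|z)$, and $\xi = 1 - \eta$ then follows trivially.
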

\begin{proof}
First, we remark that  all Green functions must have the same radius of convergence $R$ due to irreducibility of the underlying random walk. Since we consider only transient random walks, Lemma \ref{lem:recurrence} implies that either \mbox{$A,B\subsetneq G_0$} or $p\neq \frac12$ must hold.  
 \par
If $A,B\subsetneq G_0$ then $G$ is non-amenable, implying  that the spectral radius satisfies  \mbox{$\varrho=\limsup_{n\to\infty}p^{(n)}(e,e)^{1/n} <1$;} see, e.g., \cite[Cor. 12.5]{woess}. This in turn implies $R=\varrho^{-1}>1$.
\par
 The proof of the fact that $G(e,e|z)$ has also in the case $p\neq \frac12$ a  radius of convergence   strictly bigger than $1$ is outsourced to Lemma \ref{lem:p-neq-1/2} in the Appendix.
\par
It remains to consider $\eta(\cdot|z)$ and $\xi(\cdot|z)$. For $b\in B$, choose $n_b\in\N$ with $\mu_0^{(n_b)}\bigl(\varphi^{-1}(b)\bigr)>0$, which is possible due to irreducibilty of $\mu_0$. Then for real $z>0$:
\begin{eqnarray*}
\sum_{a\in A} G(e,a|z)&\geq&
\sum_{n\geq n_b+2}\mathbb{P}\left[ 
\substack{X_{n_b}=\varphi^{-1}(b),\forall m\in\{1,\dots,n_b-1\} X_{m}\in G_0,\\
X_{n_b+1}=\varphi^{-1}(b)t,  X_n\in G_0}
\right]\cdot z^n\\
&=&  \alpha^{n_b}\cdot \mu_0^{(n_b)}\bigl(\varphi^{-1}(b)\bigr)\cdot (1-\alpha)\cdot p\cdot z^{n_b+1}\cdot \eta(tb|z),
\end{eqnarray*}
where the right hand sides describes all paths, where one walks in $n_b$ steps inside $G_0$ to $\varphi^{-1}(b)$, then walks to $\varphi^{-1}(b)t=tb$ and returns afterwards to the set $A$. The above inequality implies that $\eta(tb|z)$ has also radius of convergence of at least $R$ for all $b\in B$; analogously for $\eta(t^{-1}a|z)$. The same holds for $\xi(tb|z)$ and $\xi(t^{-1}a|z)$ by definition.
\end{proof}

In the proofs later the following lemma will be a convenient tool:
\begin{lemma}\label{lem:K-convergence}
The generating function
$$
\mathcal{K}(z) :=  \sum_{g_0\in G_0} G(e,g_0|z)=\sum_{g_0\in G_0}\sum_{n\geq 0} p^{(n)}(e,g_0)z^n
$$
has radius of convergence strictly bigger than $1$. In particular, $\mathcal{K}(z)$ is arbitrarily often differentiable at $z=1$.
\end{lemma}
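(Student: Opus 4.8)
The plan is to compare $\mathcal{K}(z)$ with the Green functions of the base random walk and to use the fact, established in Lemma~\ref{lem:radius-of-convergence}, that the common radius of convergence $R$ of the Green functions $G(x,y|z)$ on $G$ satisfies $R>1$. The key point is that every path in $G$ that starts at $e$ and ends inside $G_0$ can be decomposed according to the last time it leaves the coset structure and settles into $G_0$; alternatively, and more simply, one can observe that a path from $e$ to $g_0\in G_0$ contributing to $p^{(n)}(e,g_0)$ is in particular a path from $e$ to $g_0$ in $G$, so $G(e,g_0|z)$ is dominated termwise by the corresponding quantity, but summing over all $g_0\in G_0$ requires an argument that the total mass does not blow up. First I would fix a real $z$ with $1<z<R$ and show $\mathcal{K}(z)<\infty$.

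The main step is the following observation. For a path from $e$ that is at $g_0\in G_0$ at time $n$, let $k\le n$ be the last time the walk is in $G_0$ up to and including time $n$ — but that is trivially $n$ itself; instead, decompose by the last visit to $G_0$ strictly before re-entering, which is not quite the right cut either. The cleaner route: condition on $X_m$ for all $m$, and note that
$$
\mathcal{K}(z)=\sum_{n\ge 0}z^n\,\mathbb{P}_e[X_n\in G_0]=\sum_{n\ge 0}z^n\,\mathbb{E}_e\bigl[\mathbf{1}_{\{X_n\in G_0\}}\bigr].
$$
Now use the projection/first-return structure already exploited in Lemma~\ref{lem:xi>0}: a path staying in $G_0$ contributes via $\mu_0$, and each excursion of the first-coordinate letters out of $G_0$ and back is governed by the return generating functions $\eta(tb|z)$, $\eta(t^{-1}a|z)$, whose radii of convergence are at least $R>1$ by Lemma~\ref{lem:radius-of-convergence}. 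Since $A,B$ are finite, at each visit to $A$ (resp.\ $B$) there are only boundedly many ways to leave via $t$ (resp.\ $t^{-1}$), and for $z<R$ close enough to $1$ the return factor satisfies $\eta(\cdot|z)<1$ whenever $\xi(\cdot)>0$; in the remaining transient case $A=B=G_0$, $p\ne\frac12$, one projects onto $\mathbb{Z}$ and the walk on $\mathbb{Z}$ is a (delayed) biased walk whose Green function at $0$ has radius of convergence bigger than $1$. In either case one obtains a geometric-type bound showing that the generating function counting returns to $G_0$, weighted by $z^n$, converges for $z$ in a neighbourhood of $1$.

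Concretely, I would write $\mathcal{K}(z)$ in terms of a renewal decomposition: let $U(z)=\sum_{g_0\in G_0}G_{G_0}(e_0,g_0|\alpha z)$ be the ``inside-$G_0$'' contribution, where $G_{G_0}$ is the Green function of $\mu_0$ on $G_0$ — this converges for $\alpha z$ below the (inverse spectral) radius of the base walk, hence certainly for $z$ near $1$ provided $\alpha<1$, which holds — and let $q(z)$ denote the total generating function of a single excursion that leaves $G_0$ (through some $a\in A$ or $b\in B$ via a $t^{\pm1}$ step) and returns to $G_0$; then
$$
\mathcal{K}(z)\le U(z)\cdot\sum_{k\ge 0}\bigl(C\cdot q(z)\bigr)^k=\frac{U(z)}{1-C\,q(z)}
$$
for a constant $C$ accounting for the bounded branching ($|A|,|B|,|X\cap\text{nbhd}|$ choices), valid as long as $C\,q(z)<1$. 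By Lemma~\ref{lem:radius-of-convergence} the function $q(z)$ is built from $\eta(tb|z),\eta(t^{-1}a|z)$ and the within-$G_0$ transitions, hence is analytic and finite for $z$ in a neighbourhood of $1$; and $q(1)\le 1$ with the branching corrected so that $C\,q(1)<1$ precisely because the walk is transient — i.e.\ almost surely only finitely many excursions return to $G_0$ (this is exactly the content of \eqref{equ:A-visits} together with $\xi(\cdot)>0$ from Lemma~\ref{lem:xi>0}, or the $\mathbb{Z}$-projection in Lemma~\ref{lem:xi>0-2}). Therefore by continuity $C\,q(z)<1$ for $z$ in a (possibly smaller) neighbourhood of $1$, so $\mathcal{K}(z)$ converges there and its radius of convergence exceeds $1$. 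Being a convergent power series with radius of convergence bigger than $1$, $\mathcal{K}(z)$ is in particular arbitrarily often differentiable at $z=1$.

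The main obstacle I anticipate is making the branching constant $C$ honest: one must check that the number of ``new excursions'' spawned per unit of time is genuinely bounded (it is, since from a given vertex only finitely many single steps are possible and $\mathrm{supp}(\mu)$ is finite) and, more delicately, that the strict inequality $C\,q(1)<1$ really follows from transience rather than merely $q(1)\le 1$. This is where one needs the quantitative input of Lemmas~\ref{lem:xi>0} and~\ref{lem:xi>0-2} — that the escape probabilities $\xi(tb),\xi(t^{-1}a)$ are strictly positive in the relevant cases — to guarantee that the expected number of returns to $G_0$ is finite, which is equivalent to the geometric series above having ratio strictly below $1$ at $z=1$, and hence, by analyticity and continuity of $q$, in a full neighbourhood of $1$.
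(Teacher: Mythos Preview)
Your renewal idea is sound, but the paper takes the dual cut and thereby sidesteps exactly the obstacle you flag at the end. You decompose by \emph{excursions out of} $G_0$ and need the one-excursion return probability to be strictly below $1$, which you extract from $\xi(\cdot)>0$ via Lemmas~\ref{lem:xi>0} and~\ref{lem:xi>0-2}; this can be made to work, though your branching constant $C$ is superfluous once the renewal is written cleanly (each excursion either returns or escapes, full stop), and in the case $A=B=G_0$, $p\neq\tfrac12$, one of the $\xi$'s vanishes, so you must average over the two exit directions $t,t^{-1}$ to get a ratio below $1$. The paper instead decomposes by the \emph{within-$G_0$ steps}, i.e.\ the times $n$ with $X_{n-1},X_n\in G_0$: each such step carries the factor $\alpha z$, and one obtains
\[
\mathcal{K}(z)=\mathcal{G}_0(z)\sum_{m\ge0}\bigl(\mathcal{G}_0(z)\cdot\alpha z\bigr)^m,
\]
where the between-steps piece $\mathcal{G}_0(z)$ is dominated by $\sum_{h\in A\cup B}G(e,h|z)$ and hence has radius $>1$ by Lemma~\ref{lem:radius-of-convergence}. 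The payoff is that the geometric ratio at $z=1$ is $\mathcal{G}_0(1)\cdot\alpha\le\alpha<1$ \emph{automatically}, since $\mathcal{G}_0(1)$ is a hitting probability; no case distinction and no $\xi$-positivity are needed. In short, both routes work, but the paper replaces your transience argument for the strict inequality by the trivial bound $\alpha<1$.
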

\begin{proof}
For $n\in\N$, define
$$
\zeta_n := \mathbb{P}\bigl[X_n\in G_0,\forall m\in\{1,\dots,n-1\}: X_m\notin G_0 \bigr],
$$
the probability of starting in $e$ and returning to $G_0$ at time $n$ without making any steps within $G_0$ until time $n$. Recall that this implies $X_n\in A\cup B$. Set
$$
\mathcal{G}_0(z):=\sum_{n\geq 0}\zeta_n\cdot z^n, \quad z\in\mathbb{C}.
$$
We decompose every path from $e=e_0$ to any $g_0\in G_0$ by the number $m$ of steps performed w.r.t. $\mu_0$: set $\mathbf{s}(0):=0$ and define 
$$
\mathbf{s}(k):=\inf\{n>\mathbf{s}(k-1)\mid X_{n-1},X_n\in G_0\}\textrm{ for  } k\geq 1.
$$ 
In other words, at times $\mathbf{s}_k$ the random walk makes a step within $G_0$. For all $n\in\N$, we can write
\begin{eqnarray*}
&&\sum_{g_0\in G_0}p^{(n)}(e,g) = \sum_{g_0\in G_0} \sum_{m=0}^n \mathbb{P}\bigl[ \mathbf{s}(m)\leq n,\mathbf{s}(m+1)>n,X_n=g_0\bigr]\\
&=& \zeta_n +
\sum_{g_0\in G_0} \sum_{m=1}^n \sum_{\substack{t_1,\dots,t_m\in\N: \\ t_1<t_2<\ldots <t_m\leq n} }\mathbb{P}\left[\begin{array}{c} \mathbf{s}(1)=t_1,\dots,\mathbf{s}(m)=t_m,\\ \mathbf{s}(m+1)>n,X_n=g_0\end{array}\right]\\
&=&\zeta_n+  \sum_{m=1}^n \sum_{\substack{t_1,\dots,t_m\in\N: \\ t_1<t_2<\ldots <t_m\leq n} }\bigl(\zeta_{t_1-1}\cdot \alpha\bigr)\cdot  \bigl(\zeta_{(t_2-t_1)-1}\cdot \alpha\bigr)\cdot \ldots \cdot  \bigl(\zeta_{(t_m-t_{m-1})-1}\cdot \alpha\bigr).
\end{eqnarray*}
This allows us to rewrite $\mathcal{K}(z)$ for $z\in\mathbb{C}$ in the interior of the domain of convergence:
 $$
\mathcal{K}(z) := \sum_{g_0\in G_0}\sum_{n\geq 0} p^{(n)}(e,g_0)z^n = \mathcal{G}_0(z) \cdot  \sum_{m\geq 0} \bigl( \mathcal{G}_0(z)\cdot \alpha\cdot z\bigr)^m.
$$
Observe that, for real $z>0$, we have
$$
\mathcal{G}_0(z)= \sum_{n\geq 0} \zeta_n\, z^n \leq \sum_{n\geq 0} \P[X_n\in A\cup B]\, z^n
=
\sum_{h\in A\cup B} G(e,h|z). 
$$
Since $A\cup B$ is finite and the generating functions $G(e,h|z), h\in A\cup B$, have common radius of convergence strictly bigger than $1$ due to Lemma \ref{lem:radius-of-convergence}, $\mathcal{G}_0(z)$ has also radius of convergence strictly bigger than $1$.
\par
Consider now
$$
q(z):=\mathcal{G}_0(z) \cdot \alpha \cdot z.
$$
Observe that starting at $e_0$ (or equivalently due to transitivity, starting at any $g_0 \in G_0 $) the probability of returning to $G_0$ followed directly by a step performed w.r.t to $\mu_0$ is given by
$q(1)$, that is,
$$
\P[\mathbf{s}(1)<\infty]=\mathbb{P}\bigl[\exists m\in\N_0: X_m,X_{m+1}\in G_0\bigr]=q(1).
$$
%
Since $\mathcal{G}_0(1)=\mathbb{P}[\exists n\in\N: X_n\in G_0]$ we have  $q(1)=\mathcal{G}_0(1)\cdot \alpha \leq \alpha <1$.
Moreover, $q(z)$ has radius of convergence $R(q)>1$. Since $q(z)$ as a power series is continuous, we can choose $\rho \in \bigl(1,R(q)\bigr)$ with $q(\rho)<1$. Then:
$$
\mathcal{K}(\rho) =   \mathcal{G}_0(\rho) \cdot  \sum_{m\geq 0} q(\rho)^m =  \frac{\mathcal{G}_0(\rho)}{1-q(\rho)}<\infty.
$$
Hence, $\mathcal{K}(z)$ has radius of convergence of at least $\rho>1$.
\end{proof}

\section{Boundary of the Random Walk}
\label{sec:boundary}

In this section we describe a natural boundary of the random walk on $G$. Define
$$
\mathcal{B}:=\left\lbrace g_1t_1 g_2t_2\dots \,\biggl|\, \substack{g_1,g_2,\dots \in X\cup Y, \ t_1,t_2,\dots \in \{t,t^{-1}\}, \\   t_i=t\Rightarrow g_i\in X, t_i=t^{-1}\Rightarrow g_i\in Y, g_i=e_0\Rightarrow t_{i-1}t_i\neq e }\right\rbrace \subset \bigl(X\cup Y\cup\{t,t^{-1}\}\bigr)^{\N},
$$
the set of infinite words in normal form.  \cite{woess-isr} showed that an irreducible random walk with finite range on an HNN extension with $A\subsetneq G_0$ converges to a random infinite word in $\mathcal{B}$. Nonetheless, we give a precise mathematical related statement and a general proof (which covers also the case $A=G_0$) of this convergence towards $\mathcal{B}$, because the proofs are short and help the reader to get a better understanding of the structure of HNN extensions.
\par
The \textit{$t$-length} of a word $g=g_1 t_1 g_2  t_2  \dots  g_n t_n g_{n+1}$ in normal form in the sense of (\ref{equ:normalform}) is defined as
\begin{equation}\label{def:t-length}
|g|_t:=n.
\end{equation}
We make the first observation that each copy of $G_0$ is visited finitely often only:
\begin{lemma}\label{lem:copy-finite-visit}
Let be $g_1t_1\dots g_kt_k\in G_0$ in normal form. Then the set  $g_1t_1\dots g_kt_kG_0$ is  visited finitely often almost surely. 
\end{lemma}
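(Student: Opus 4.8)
\medskip
\noindent\textbf{Proof proposal.} Write $C:=g_1t_1\cdots g_kt_kG_0$ for the copy of $G_0$ under consideration. The plan is to combine two facts whose conjunction forces $(X_n)_{n\in\N_0}$ to spend only finitely much time in $C$: \emph{(i)} each sojourn of the walk inside $C$ is a.s.\ of finite length, and \emph{(ii)} the walk leaves $C$ only finitely often. Since we are in the transient regime, Lemma~\ref{lem:recurrence} tells us that either $A\subsetneq G_0$, or else $A=B=G_0$ with $p\neq\frac12$, and I would treat these two cases separately.

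\smallskip
\noindent\emph{Case $A\subsetneq G_0$.} The first step is a bookkeeping observation based on the rewriting rules $at=t\varphi(a)$ and $bt^{-1}=t^{-1}\varphi^{-1}(b)$: for any vertex $v=g_1t_1\cdots g_kt_kh\in C$ with $h\in G_0$, every step $v\mapsto vs_0$ with $s_0\in S_0$ stays in $C$ (since $hs_0\in G_0$), whereas both steps $v\mapsto vt$ and $v\mapsto vt^{-1}$ leave $C$; moreover, writing $h=xa$ with $x\in X$, $a\in A$, one checks
$$
vt=g_1t_1\cdots g_kt_k\,x\,t\,\varphi(a)=g\cdot\bigl(t\,\varphi(a)\bigr),\qquad g:=g_1t_1\cdots g_kt_kx,
$$
where $gG_0=C$ and $\varphi(a)\in B$; symmetrically, writing $h=yb$ with $y\in Y$, $b\in B$, one gets $vt^{-1}=g'\cdot\bigl(t^{-1}\varphi^{-1}(b)\bigr)$ with $g'G_0=C$ and $\varphi^{-1}(b)\in A$. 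Thus \emph{every} departure from $C$ is a $t^{\pm1}$-step, such a step is taken with probability $\mu(t)+\mu(t^{-1})=1-\alpha$ regardless of the past, and it deposits the walk at a vertex that, relative to $C$, has the shape $tb'$ ($b'\in B$) or $t^{-1}a'$ ($a'\in A$). Fact \emph{(i)} is then immediate: conditionally on $X_n\in C$ one has $X_{n+1}\notin C$ with probability $1-\alpha$, so each sojourn in $C$ has a geometric distribution with parameter $1-\alpha$, in particular is a.s.\ finite.

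\smallskip
For fact \emph{(ii)}, let $\sigma_1<\sigma_2<\cdots$ be the successive (stopping) times at which the walk steps out of $C$. By the observation above each $X_{\sigma_j}$ equals $g(tb')$ or $g(t^{-1}a')$ with $gG_0=C$, so group-invariance of the walk together with Lemma~\ref{lem:xi>0} gives
$$
\P_{X_{\sigma_j}}\bigl[\forall n\in\N:\ X_n\notin C\bigr]\ \ge\ \delta:=\min\Bigl\{\,\min_{b\in B}\xi(tb),\ \min_{a\in A}\xi(t^{-1}a)\,\Bigr\}>0.
$$
Since $\sigma_{j+1}<\infty$ forces the walk to return to $C$ after time $\sigma_j$, applying the strong Markov property at $\sigma_j$ yields $\P[\sigma_{j+1}<\infty]\le(1-\delta)\,\P[\sigma_j<\infty]$, whence $\P[\sigma_j<\infty\text{ for every }j]=0$, i.e.\ finitely many exits a.s. Together with \emph{(i)} (which rules out an infinite sojourn, so that no re-entry is possible after the last exit) this gives that $(X_n)_{n\in\N_0}$ is in $C$ for only finitely many $n$, as claimed.

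\smallskip
\noindent\emph{Case $A=B=G_0$ (so $p\neq\frac12$).} Here $X=Y=\{e_0\}$, each group element has a normal form $t^ng_0$ or $t^{-n}g_0$, and $C$ equals $t^{k}G_0$ or $t^{-k}G_0$. The projection $\psi$ from the proof of Lemma~\ref{lem:recurrence} turns $(X_n)_{n\in\N_0}$ into a lazy nearest-neighbour walk on $\Z$ with up-probability $(1-\alpha)p$ and down-probability $(1-\alpha)(1-p)$; as $p\neq\frac12$ this walk is transient, hence visits the single integer corresponding to $C$ only finitely often a.s., which is precisely the assertion.

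\smallskip
\noindent\emph{Main obstacle.} Conceptually the key point is to recognise that the functions $\xi(tb),\xi(t^{-1}a)$ of Lemma~\ref{lem:xi>0} are tailor-made for this: each departure from $C$ has a uniformly positive chance of being the last, after which fact \emph{(ii)} is a routine Borel--Cantelli / strong-Markov argument. The only slightly delicate piece of work is the normal-form bookkeeping verifying that \emph{every} way of leaving $C$ --- whether it raises or lowers the $t$-length --- lands the walk at a $tb'$- or $t^{-1}a'$-type vertex relative to $C$; this requires distinguishing according to whether the final $G_0$-block of the current word lies in $A$ resp.\ $B$ (so that the adjacent $t$-letter can be absorbed), but is entirely routine given the defining relations.
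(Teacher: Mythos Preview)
Your proof is correct and follows essentially the same strategy as the paper: in the non-degenerate case you invoke Lemma~\ref{lem:xi>0} to obtain a uniformly positive escape probability and then run a geometric/strong-Markov argument, while the case $A=B=G_0$ is handled identically via the projection $\psi$. The only noteworthy difference is a simplification on the paper's side: rather than tracking \emph{every} exit from $C$ (which forces you into the case analysis on whether the $t^{\pm1}$-step raises or lowers the $t$-length), the paper simply observes that at each visit time the specific step $g\mapsto gt_k$ --- which can never cancel against the preceding $t_k$ --- occurs with probability at least $(1-\alpha)\min\{p,1-p\}$ and is followed by non-return with probability at least $\min_{h}\xi(t_kh)$; this sidesteps your normal-form bookkeeping entirely and gives the geometric bound on the number of visits in one stroke.
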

\begin{proof}
First, we consider the case $A,B\subsetneq G_0$. Let be $n_1,n_2,\ldots\in\N$  the instants of time at which the random walk $(X_n)_{n\in\N_0}$ visits the set $g_1t_1\dots g_kt_kG_0$. Suppose that the random walk is at $g=g_1t_1\dots g_kt_kg_{k+1}^{(j)}$, $g_{k+1}^{(j)}\in G_0$, at some time $n_j$. Then the probability of walking from $g$ to $gt_k$ with no further revisit of $g_1t_1\dots g_kt_kG_0$ is at least 
$$
(1-\alpha)\cdot \min\{p,1-p\}\cdot \min_{h\in H}\xi(t_k h)>0,
$$
where $H=A$ if $t_k=t^{-1}$, and $H=B$ if $t_k=t$; here, we used Lemma \ref{lem:xi>0}.
Therefore, a geometric distribution argument shows that there are almost surely only finitely many indices $m\in\N$ with $X_{m}\in g_1t_1\dots g_kt_kG_0$. This proves the claim in the case $A,B\subsetneq G_0$.
\par
In the case $A=B=G_0$ and $p\neq \frac12$ the claim follows directly from transience of the projections $\bigl(\psi(X_n)\bigr)_{n\in\N_0}$ in the proof of Lemma \ref{lem:recurrence} and finiteness of $A$ \mbox{and $B$.}
\end{proof}
The last lemma motivates the definition of the \textit{exit times} $\e{k}$, $k\in\N$, as
$$
\e{k}:=\min \bigl\lbrace m\in\N_0 \mid \forall n\geq m: |X_n|_t\geq k\bigr\rbrace.
$$
Let be $g_\infty=g_1t_1 g_2t_2\ldots\in\mathcal{B}$ and denote by $X_n\wedge g_\infty$ the common prefix of $X_n$ and $g_\infty$, that is, if $X_n=g'_1t'_1 g'_2t'_2\dots g'_k t_k'g'_{k+1}$, then 
$$
X_n\wedge g_\infty = g_1t_1 \dots g_lt_l,
$$
where $l=\max\{i\in\N \mid  g_1t_1 \dots g_it_i=g'_1t'_1 \dots g'_it_i\}$.
We say that a realisation $(x_0,x_1,\dots)\in G^{\mathbb{N}_0}$ of $(X_n)_{n\in\N_0}$ converges to  $g_\infty$ if $\lim_{n\to\infty}|x_n\wedge g_\infty|=\infty$. 
Now we are able to show that $\mathcal{B}$ is a natural  boundary of the random walk towards which $(X_n)_{n\in\N_0}$ converges. 
\begin{proposition}\label{lem:infinite-words}
For all $k\in\N$, $\e{k}<\infty$ almost surely. Moreover, the random walk $(X_n)_{n\in\N_0}$ converges almost surely to some $\mathcal{B}$-valued random variable $X_\infty$.
\end{proposition}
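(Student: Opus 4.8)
The plan is to derive both assertions from Lemma~\ref{lem:copy-finite-visit} together with one purely combinatorial rigidity property of normal forms. For a word $g$ with $|g|_t\geq j$ write $[g]_j:=g_1t_1\cdots g_jt_j$ for the prefix consisting of its first $j$ syllables (so $[g]_j$ is a prefix of the word $[g]$ introduced above). The property I would isolate, call it $(\star)$, is: \emph{if a single step sends $X\mapsto X\zeta$ with $\zeta\in S_0\cup\{t,t^{-1}\}$ and both $|X|_t\geq j$ and $|X\zeta|_t\geq j$, then $[X]_j=[X\zeta]_j$.} This is checked by inspecting the three cases for $\zeta$ in the normal-form rewriting: multiplying by a generator only rewrites the terminal $G_0$-block, possibly pushes one letter across the last $t$, and possibly cancels the last $t$-pair, but never touches the first $j$ syllables once $j$ syllables are present and remain present. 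Iterating $(\star)$ along any time interval on which $|X_n|_t\geq j$ shows that $[X_n]_j$ is \emph{constant} on that interval; equivalently, as long as the $t$-length stays $\geq j$ the walk is confined to the copies of $G_0$ lying beyond one fixed copy $[X_n]_jG_0$.

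First I would show $\e{k}<\infty$ a.s.\ for every $k$, which is the same as $|X_n|_t\to\infty$ a.s. I proceed by induction on $k$, the statement being ``almost surely $|X_n|_t\geq k$ eventually'' (trivial for $k=0$). Assume it for $k$. Since $|X_n|_t$ is integer-valued, the event that $|X_n|_t\geq k+1$ fails to hold eventually is contained, up to a null set, in $E:=\{|X_n|_t\geq k\text{ eventually}\}\cap\{|X_n|_t=k\text{ infinitely often}\}$. On $E$ choose a (random) $N$ with $|X_n|_t\geq k$ for all $n\geq N$; by $(\star)$ the truncation $[X_n]_k$ equals a single word $w$ for all $n\geq N$, and $w$ ranges over the countable set of admissible length-$k$ prefixes. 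Each time $|X_n|_t=k$ we have $X_n\in wG_0$, so on $E$ some copy $wG_0$ is visited infinitely often; hence $E\subseteq\bigcup_w\{wG_0\text{ visited infinitely often}\}$, a countable union of null sets by Lemma~\ref{lem:copy-finite-visit}, so $\P(E)=0$ and $|X_n|_t\geq k+1$ eventually. (In the remaining transient case $A=B=G_0$, $p\neq\frac12$, this also follows at once from transience of the projection $\psi(X_n)$ from Lemma~\ref{lem:recurrence}, but the argument above is uniform.)

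For the convergence statement I use what was just proved. Fix $k$; for all $n\geq\e{k}$ we have $|X_n|_t\geq k$, so by $(\star)$ the truncation $[X_n]_k$ is constant, say $\hat g_1\hat t_1\cdots\hat g_k\hat t_k$; and since $\e{k+1}\geq\e{k}$, for $n\geq\e{k+1}$ both $[X_n]_k$ and $[X_n]_{k+1}$ are truncations of the same $X_n$, so these words are consistent as $k$ grows. Thus $X_\infty:=\hat g_1\hat t_1\hat g_2\hat t_2\cdots$ is a well-defined element of $(X\cup Y\cup\{t,t^{-1}\})^{\N}$, and the defining constraints of $\mathcal{B}$ hold for $X_\infty$ because they hold for each finite truncation $[X_n]_k$, $n\geq\e{k}$. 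Finally, for $n\geq\e{k}$ the normal forms of $X_n$ and $X_\infty$ agree on their first $k$ syllables, so $|X_n\wedge X_\infty|\geq k$; letting $k\to\infty$ gives $|X_n\wedge X_\infty|\to\infty$, i.e.\ $X_n\to X_\infty$, and $X_\infty$ is measurable as an almost-sure limit of measurable maps.

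The main obstacle is establishing $(\star)$ cleanly and, conceptually, the passage from ``each copy of $G_0$ is visited finitely often'' to ``$|X_n|_t\to\infty$'': because $G_0$ may be infinite there are infinitely many copies at each fixed $t$-length, so a naive pigeonhole argument does not suffice, and one genuinely needs the tree-like rigidity encoded in $(\star)$ (one cannot pass from one copy of $G_0$ at $t$-length $k$ to another without the $t$-length dropping below $k$) to pin the walk to a single copy along the bad event. Once $(\star)$ is in hand, everything else is bookkeeping with normal forms.
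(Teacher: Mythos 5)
Your proposal is correct and follows essentially the same route as the paper: induction on the $t$-length, using Lemma \ref{lem:copy-finite-visit} at each stage together with the observation that once $|X_n|_t$ stays $\geq k$ the length-$k$ prefix is frozen, and then reading off $X_\infty$ from the stabilized prefixes. Your explicit isolation of the rigidity property $(\star)$ and of the countable-union-over-prefixes step only makes precise what the paper's proof uses implicitly.
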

\begin{proof}
It is sufficient to prove that, for all $m\in\N$, there is some index $N_m$ such that we have $|X_{n}|_t\geq m$ for all $n\geq N_m$. We prove this claim by induction.
By Lemma \ref{lem:copy-finite-visit}, the set $G_0$ is almost surely visited finitely often, that is, there is some minimal, almost surely finite random time $\e{1}$ such that $|X_{n}|_t\geq 1$ for all $n\geq \e{1}$. In particular, the first two letters of $X_n$ are stabilized and will \textit{not} change for $n\geq \e{1}$.
\par
Assume now that there is some finite random time $\e{m}$ such that  \mbox{$|X_{n}|_t\geq m$} for all $n\geq \e{m}$. This implies that the prefix of $X_n$ of $t$-length $m$ is constant, that is, there is some word $g=g_1t_1\dots g_mt_m$ such that $X_n$ starts with $g$ for all $n\geq \e{m}$. Once again by Lemma \ref{lem:copy-finite-visit}, the set $gG_0$ is almost surely visited finitely often only, that is, there is some almost surely finite random time $\e{m+1}\in\N$ such that $|X_{n}|_t\geq m+1$ for all $n\geq \e{m+1}$. But this means that there are $g_{m+1}\in X\cup Y$, $t_{m+1}\in\{t,t^{-1}\}$ such that $X_n$ starts with $gg_{m+1}t_{m+1}$ for all $n\geq \e{m+1}$.
This finishes the proof.
\end{proof}
In \cite{woess-isr} it is shown that $(\mathcal{B},\nu)$ is a model for the Poisson boundary, where $\nu$ is the hitting probability of $\mathcal{B}$, that is, for measureable $B\subset\mathcal{B}$,  $\nu(B)$ is the probability that $(X_n)_{n\in\N_0}$ converges to some element in $B$.

\section{Existence of the Rate of Escape w.r.t. $\ell$}
\label{sec:exit-times}

In this section we derive existence of the rate of escape w.r.t. the length function $\ell$ by introducing a new Markov chain which tracks the random walk's way towards the boundary $\mathcal{B}$; compare with \cite{gilch:07,gilch:08,gilch:11}.
\par
Recall the definition of the \textit{exit times} $\e{k}$, $k\in\N$, from the last section. 
By Proposition \ref{lem:infinite-words}, $\e{k}<\infty$ almost surely for all $k\in\N$. The \textit{increments} are defined as
$$
\i{k}:=\e{k}-\e{k-1}.
$$
Furthermore,  if $X_{\e{k}}=g_1t_1\dots g_kt_kh$, where $h\in B$, if $t_k=t$, and $h\in A$, if $t_k=t^{-1}$, then we set
$$
\W_{k}:=g_kt_kh.
$$
Set
\begin{eqnarray*}
\mathcal{D}&:=&\{ gth \mid g\in X, h\in B\} \cup \{ gt^{-1}h \mid g\in Y, h\in A\},\\
\mathbb{D} &:=& \left\lbrace (gt'h,n)\in\mathcal{D}\times\N \,\biggl|\,
\exists g_1t_1h_1\in \mathcal{D}:
\mathbb{P}\left[\begin{array}{c}X_{\e{1}}=g_1t_1h_1,\\ X_{\e{2}}=g_1t_1gt'h,\mathbf{i}_2=n\end{array}\right]>0\right\rbrace.
\end{eqnarray*}
Since the events $[X_{\e{k}}=n]$, $n\in\N$, depend on the future after  $\e{k}$, the exit times are no stopping times. Hence, conditioning the random walk $(X_n)_{n\in\N_0}$ on exit times destroys the Markov property. However, we make the following crucial observation:
\begin{proposition}\label{lem:exit-time-chain}
$(\W_{k},\mathbf{i}_k)_{k\in\N}$ is an irreducible, aperiodic Markov chain on $\mathbb{D}$ with transition probabilities
\begin{eqnarray*}
&&\P[\W_{k+1}=w_2t_2h_2,\mathbf{i}_{k+1}=n\mid \W_{k}=w_1t_1h_1,\mathbf{i}_{k}=m]\\
&=&\begin{cases} 
\frac{\xi(t_2h_2)}{\xi(t_1h_1)}\cdot \P_{t_1h_1}\Bigl[\substack{X_n=t_1w_2t_2h_2,|X_{n-1}|_t=1,\\ \forall n'<n: |X_{n'}|_t\geq 1 }\Bigr], & \mathrm{if }\ t_1w_2t_2\neq e,\\
0, &\mathrm{otherwise},
\end{cases}
\end{eqnarray*}
where $(w_1t_1h_1,m),(w_2t_2h_2,n)\in\mathbb{D}$.
%
\end{proposition}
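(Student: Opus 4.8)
The plan is to realise $(\W_k,\i k)_{k\in\N}$ as a functional of the walk observed along its exit times, and to circumvent the fact that the $\e k$ are not stopping times by decomposing each conditioning event into a part carried by the trajectory up to $\e k$ and a ``never return'' tail event. First I would record the geometry of exit times. Since the walk is nearest neighbour in the Cayley graph of $G$ and $|X_n|_t$ changes by at most one per step, minimality of $\e k$ forces $|X_{\e k-1}|_t=k-1$ and $|X_{\e k}|_t=k$; hence the step at time $\e k$ appends exactly one stable letter, $X_{\e k}=[X_{\e{k-1}}]\,\W_k$, and the prefix $[X_{\e k}]=[\W_1]\cdots[\W_k]$ of $t$-length $k$ is frozen for all times $\ge\e k$, any change of it requiring the $t$-length to drop below $k$. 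In particular every attained value of $(\W_k,\i k)$ lies in $\mathbb D$, and $\mathbb D$ is exactly the set of attainable pairs; this identifies the state space, the no-cancellation constraint ``$t_1w_2t_2\neq e$'' coming from normal-form compatibility of $[X_{\e k}]$ with $\W_{k+1}$.

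The heart of the argument is the following claim: if $C=\{(\W_1,\i1)=c_1,\dots,(\W_k,\i k)=c_k\}$ with $c_k=(w_1t_1h_1,m)$ and $\P[C]>0$, then conditionally on $C$ the shifted, de-prefixed process $\bigl([X_{\e k}]^{-1}X_{\e k+n}\bigr)_{n\ge0}$ is distributed as the $\mu$-random walk started at $w_1t_1h_1$ conditioned on $D:=\{\forall n:\,X_n\notin G_0\}=\{\forall n:\,|X_n|_t\ge1\}$. To prove it I would write $C=C'\cap\{\forall j\ge\e k:\,|X_j|_t\ge k\}$, where $C'$ is measurable with respect to $(X_0,\dots,X_{\e k})$: given the finite trajectory up to time $\e k$ together with the tail event that the $t$-length never again drops below $k$, all the intermediate exit data $\e1,\dots,\e k$ and $\W_1,\dots,\W_k$ are determined. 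Decomposing over the value $\e k=s$ and applying the ordinary Markov property at the deterministic time $s$, together with the group-invariance of the walk and the fact that left multiplication by an element of $G_0$ preserves the $t$-length, one obtains $\P[C]=\P[C']\cdot\xi(t_1h_1)$ and, more generally, the asserted conditional law; here one uses $\P_{X_{\e k}}[\forall j\ge0:\,|X_j|_t\ge k]=\P_{w_1t_1h_1}[\forall j:\,X_j\notin G_0]=\xi(t_1h_1)$, which is positive by Lemmas \ref{lem:xi>0} and \ref{lem:xi>0-2}, so the conditioning is legitimate. This decomposition, replacing a would-be strong Markov property at a non-stopping time, is the main obstacle.

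Granting the claim, the kernel is read off directly. Under the conditioned law the next exit corresponds to the last time the $t$-length is $\le1$: running the walk until it reaches $t_1w_2t_2h_2$ at time $n$ with $|X_{n-1}|_t=1$ and $|X_{n'}|_t\ge1$ for all $n'<n$, and then imposing, by one further application of the Markov property and transitivity, that it never again returns to $t$-length $\le1$ — an event of probability $\xi(t_2h_2)$ — yields
$$
\P\bigl[(\W_{k+1},\i{k+1})=(w_2t_2h_2,n)\mid C\bigr]=\frac{\xi(t_2h_2)}{\xi(t_1h_1)}\,\P_{t_1h_1}\Bigl[X_n=t_1w_2t_2h_2,\ |X_{n-1}|_t=1,\ \forall n'<n:\,|X_{n'}|_t\ge1\Bigr].
$$
If $t_1w_2t_2=e$, cancellation makes the target word have $t$-length $<2$, no such path exists, and the probability is $0$, giving the second case. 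The right-hand side depends on $c_1,\dots,c_k$ only through $w_1t_1h_1$ (indeed not even through $m$), so $(\W_k,\i k)_k$ is a Markov chain on $\mathbb D$ with the stated transitions. Irreducibility follows by constructing explicit positive-probability chains of exit steps between any two states of $\mathbb D$: using irreducibility of $\mu_0$ one moves the trailing $G_0$-coordinate to a prescribed value, appends $t$ or $t^{-1}$ (here $p,1-p>0$), and invokes positivity of the relevant $\xi(\cdot)$; the reachable one-step successors are precisely $\mathbb D$ by its definition. Aperiodicity is obtained by exhibiting a self-loop, e.g. at the state $(te_0,1)$: from $te_0=t$ the walk jumps to $t^2$ and never returns below $t$-length $2$, so $(\W_{k+1},\i{k+1})=(te_0,1)$ has positive probability from $(te_0,\cdot)$.
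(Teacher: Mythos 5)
Your proposal follows essentially the same route as the paper's proof of the Markov property and of the kernel formula: you factor the joint law of $(\W_{1},\i{1}),\dots,(\W_{k},\i{k})$ into an event determined by the trajectory up to $\e{k}$ (made rigorous by decomposing over the deterministic value of $\e{k}$ and applying the ordinary Markov property there, together with group invariance) times the tail probability $\xi(t_kh_k)$ that the $t$-length never again drops below $k$; the quotient of the $(k+1)$-fold by the $k$-fold expression then yields the stated kernel, which depends only on $t_kh_k$. This is exactly the paper's computation, and your handling of the fact that the $\e{k}$ are not stopping times is the intended one.

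Two caveats on the irreducibility/aperiodicity part. First, your assertion that ``the reachable one-step successors are precisely $\mathbb{D}$ by its definition'' is false: $\mathbb{D}$ is the \emph{union} over all predecessor states of the one-step successor sets, and the successor set of an individual state is genuinely smaller --- the constraint $t_1w_2t_2\neq e$ excludes all targets of the form $(e_0t_1^{-1}h,n)$, and whether a given increment $n$ is realizable from $t_1h_1$ depends on $h_1$. This is precisely why the paper proves \emph{two-step} reachability between arbitrary states, routing through an intermediate state chosen via the definition of $\mathbb{D}$. Your surrounding sketch (adjust the trailing $G_0$-coordinate using irreducibility of $\mu_0$, append a stable letter, invoke positivity of the relevant $\xi$) does supply the needed multi-step chains, so the argument is reparable, but the quoted sentence as stated would not close it. Second, the self-loop at $(e_0te_0,1)$ requires $\xi(te_0)>0$, which fails when $A=B=G_0$ and $p<\tfrac12$ (Lemma \ref{lem:xi>0-2}); in that case one must use $(e_0t^{-1}e_0,1)$ instead. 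Granting these repairs, the self-loop argument for aperiodicity is a clean alternative to the paper's two-step construction.
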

\begin{proof}
Let be $(w_1t_1h_1,n_1),\dots,(w_{k+1}t_{k+1}h_{k+1},n_{k+1})\in\mathbb{D}$ such that this sequence satisfies
$\mathbb{P}[\forall j\in\{1,\dots,k\}: \mathbf{W}_j=w_jt_jh_j,\mathbf{i}_j=n_j]>0$. In particular, the words $w_1t_1\dots w_jt_jh_{j}$, $j\leq k+1$, are in normal form in the sense of (\ref{equ:normalform})
Then:
\begin{eqnarray*}
&& \P\bigl[\W_{1}=w_1t_1h_1,\mathbf{i}_{1}=n_1, \dots,  \W_{k}=w_{k} t_{k}h_{k},\mathbf{i}_{k}=n_{k}\bigr] \\[1ex]
&=&\P\left[\substack{X_{\e{1}}=w_1t_1h_1,\mathbf{i}_{1}=n_1,X_{\e{2}}=w_1t_1w_2t_2h_2,\mathbf{i}_{2}=n_2,\\  \dots,  X_{\e{k}}=w_1t_1\dots w_{k} t_{k}h_{k},\mathbf{i}_{k}=n_{k}}\right] \\[1ex]
&=& \P\left[ \substack{\forall j\in\{1,\dots,k\} \forall m\in\{0,\dots,n_j\}:\\[1ex]
|X_{n_1+\dots+n_{j-1}+m}|_t\geq j-1, |X_{n_1+\dots+n_{j}-1}|_t= j-1, X_{n_1+\dots + n_j}=w_1t_1\dots w_jt_jh_j} \right]\\
&&\quad
\cdot \P_{w_1t_1\dots w_kt_kh_k}\bigl[\forall n\geq 1: |X_{n_1+\dots + n_k+n}|_t\geq k\bigr]\\[1ex]
&=& \P\left[ \substack{\forall j\in\{1,\dots,k\} \forall m\in\{0,\dots,n_j\}:\\
|X_{n_1+\dots+n_{j-1}+m}|_t\geq j-1, |X_{n_1+\dots+n_{j}-1}|_t= j-1, X_{n_1+\dots + n_j}=w_1t_1\dots w_jt_jh_j} \right]
 \cdot \xi(t_kh_k).
\end{eqnarray*}
The last equation is due to transitivity (group invariance) of our random walk $(X_n)_{n\in\N_0}$. Analogously,
\begin{eqnarray*}
&& \P[\W_{1}=w_1t_1h_1,\mathbf{i}_{1}=n_1, \dots,  \W_{k+1}=w_{k+1} t_{k+1}h_{k+1},\mathbf{i}_{k+1}=n_{k+1}] \\[1ex]
&=&\P\left[\substack{X_{\e{1}}=w_1t_1h_1,\mathbf{i}_{1}=n_1,X_{\e{2}}=w_1t_1w_2t_2h_2,\mathbf{i}_{2}=n_2, \dots, \\ X_{\e{k+1}}=w_1t_1\dots w_{k+1} t_{k+1}h_{k+1},\mathbf{i}_{k+1}=n_{k+1}}\right] \\[1ex]
&=& \P\left[ \substack{\forall j\in\{1,\dots,k\} \forall m\in\{0,\dots,n_j\}:\\
|X_{n_1+\dots+n_{j-1}+m}|_t\geq j-1, |X_{n_1+\dots+n_{j}-1}|_t= j-1, X_{n_1+\dots + n_j}=w_1t_1\dots w_jt_jh_j} \right]\\
&&\quad
\cdot \P_{w_1t_1\dots w_kt_kh_k}\left[\substack{\forall m\in\{0,\dots,n_{k+1}\}: |X_{n_1+\dots+n_{k}+m}|_t\geq k, |X_{n_1+\dots+n_{k+1}-1}|_t= k, \\ X_{n_1+\dots + n_{k+1}}=w_1t_1\dots w_{k+1}t_{k+1}h_{k+1} }\right]\\
&&\quad 
\cdot \P_{w_1t_1\dots w_{k+1}t_{k+1}h_{k+1}}\bigl[\forall n\geq 1: |X_{n_1+\dots + n_{k+1}+n}|_t\geq k+1\bigr]\\[1ex]
&=& \P\left[ \substack{\forall j\in\{1,\dots,k\} \forall m\in\{0,\dots,n_j\}:\\
|X_{n_1+\dots+n_{j-1}+m}|_t\geq j-1, |X_{n_1+\dots+n_{j}-1}|_t= j-1, X_{n_1+\dots + n_j}=w_1t_1\dots w_jt_jh_j} \right]\\
&&\quad
\cdot \P_{w_1t_1\dots w_kt_kh_k}\left[\substack{\forall m\in\{0,\dots,n_{k+1}\}: |X_{n_1+\dots+n_{k}+m}|_t\geq k, \\ |X_{n_1+\dots+n_{k+1}-1}|_t= k, \\ X_{n_1+\dots + n_{k+1}}=w_1t_1\dots w_{k+1}t_{k+1}h_{k+1}}\right]
\cdot  \xi(t_{k+1}h_{k+1}).
\end{eqnarray*}
Hence, transitivity of the random walk yields yields once again:
\begin{eqnarray*}
&& \P\left[\W_{k+1}=w_{k+1} t_{k+1}h_{k+1},\mathbf{i}_{k+1}=n_{k+1}\,\biggl| \,\substack{\W_{1}=w_1t_1h_1,\mathbf{i}_{1}=n_1, \dots,  \\ \W_{k}=w_{k} t_{k}h_{k},\mathbf{i}_{k}=n_{k}}\right] \\
&=& \frac{\xi(t_{k+1}h_{k+1})}{\xi(t_kh_k)} \P_{t_kh_k}\left[\begin{array}{c} X_{n_{k+1}}=t_kw_{k+1}t_{k+1}h_{k+1},|X_{n_{k+1}-1}|_t=1,\\ \forall n<n_{k+1}: |X_n|_t\geq 1\end{array}\right].
\end{eqnarray*}
From the  formula above follows that $\mathrm{supp}(\mathbf{W}_k,\mathbf{i}_k)=\mathbb{D}$ for $k\geq 2$: indeed, for any $(g_1t_1h_1,n_1)\in\mathbb{D}$, there exists some $g_0t_0h_0\in\mathcal{D}$ with $g_0 \neq e_0$ such that 
$$
\P\bigl[X_{\e{1}}=g_0t_0h_0,X_{\e{2}}=g_0t_0g_1t_1h_1,\mathbf{i}_2=n_1\bigr]>0, 
$$
yielding
$$
\mathbb{P}\left[\begin{array}{c} X_{\e{1}}=t_0,X_{\e{2}}=t_0^2,\dots,X_{\e{k-2}}=t_0^{k-2},\\ X_{\e{k-1}}=t_0^{k-2}g_0t_0h_0,X_{\e{k}}=t_0^{k-2}g_0t_0g_1t_1h_1,\mathbf{i}_2=n_1\end{array}\right]>0,
$$
that is, $(g_1t_1h_,n_1)\in\mathrm{supp}(\mathbf{W}_k,\mathbf{i}_k)$.
\par
For irreducibility and aperiodicity, it suffices to show that any $(g_1t_1h_1,n_1)\in\mathbb{D}$ can be reached from any other $(g_0t_0h_0,n_0)\in\mathbb{D}$ in two steps. First, we consider the case $t_1=t$. Let be $g_1t_1h_1=xtb$ with $x\in X$ and $b\in B$ and choose \mbox{$\bar g_0\bar t_0\bar h_0,\bar g_1\bar t_1\bar h_1\in\mathcal{D}$} with $\bar g_1\neq e_0$ such that 
\begin{eqnarray*}
\P[X_{\e{1}}=\bar g_0\bar t_0\bar h_0,X_{\e{2}}=\bar g_0 \bar t_0 g_0t_0h_0,\mathbf{i}_2=n_0]& >& 0\  \textrm{ and  }\\
 \P[X_{\e{1}}=\bar g_1\bar t_1\bar h_1,X_{\e{2}}=\bar g_1\bar t_1 xtb,\mathbf{i}_2=n_1]&>&0;
\end{eqnarray*}
compare with definition of $\mathbb{D}$ and recall transitivity of the random walk. Take any $m\in\N$ such that \mbox{$\mu_0^{(m)}\bigl(h_0^{-1}\bar g_1 \varphi^{\delta}(\bar h_1)\bigr)>0$,} where $\delta:=1$, if $\bar t_1=t^{-1}$,  and $\delta:=-1$, if $\bar t_1=t$; then for all $k\geq 2$:
$$
\mathbb{P}\left[\begin{array}{c} 
X_{\e{1}}=\bar t_0, X_{\e{2}}=\bar t_0^2,\dots ,  X_{\e{k-2}}=\bar t_0^{k-2},\\
X_{\e{k-1}}=\bar t_0^{k-2}\bar g_0\bar t_0 \bar h_0, 
X_{\e{k}}=\bar t_0^{k-2}\bar g_0\bar t_0g_0t_0h_0,
\mathbf{i}_k=n_0,\\
X_{\e{k+1}}=\bar t_0^{k-2}\bar g_0\bar t_0g_0t_0\bar g_1\bar t_1\bar h_1,\mathbf{i}_{k+1}=m+1,\\
X_{\e{k+2}}=\bar t_0^{k-2}\bar g_0\bar t_0g_0t_0\bar g_1\bar t_1xtb,\mathbf{i}_{k+2}=n_1\end{array}\right]>0.
$$
Hence, we have proven that each element of $\mathbb{D}$ can be reached in two steps from any other state if $t_1=t$.
The case $t_1=t^{-1}$ is shown analogously. This finishes the proof.
\end{proof}
Observe that, for all $(w_1t_1h_1,m),(w_2t_2h_2,n)\in\mathbb{D}$, the transition probabilities of $(\W_{k},\mathbf{i}_k)_{k\in\N}$ in Lemma \ref{lem:exit-time-chain}
\begin{eqnarray*}
&&q\bigl((w_1t_1h_1,m),(w_2t_2h_2,n) \bigr)\\
&:=& 
\begin{cases}
\P\bigl[\W_{k+1}=w_2t_2h_2, \mathbf{i}_{k+1}=n\mid \W_k=w_1t_1h_1, \mathbf{i}_k=m\bigr], & \textrm{if } t_1w_2t_2\neq e\\
0,& \textrm{otherwise}
\end{cases}
\end{eqnarray*}
depend only on $t_1h_1$, $w_2t_2h_2$ and $n$, but \textit{not} on $w_1$ and $m$. If $\W_k=w_kt_kh_k$ then set 
$$
\mathbf{h}_k:=t_kh_k
$$
and define
$$
\mathcal{D}_0:=\{ th \mid  h\in B\} \cup \{ t^{-1}h \mid h\in A\}.
$$ 
Note that $\mathbf{h}_k$ can take only finitely many different values. It is easy to see that $(\mathbf{h}_k)_{k\in\N}$ forms an irreducible Markov chain on $\mathcal{D}_0$ with transition probabilities
$$
q_\mathbf{h}(t_1h_1,t_2h_2)= \begin{cases}
\displaystyle \sum_{x\in X,n\in\N} q\bigl( (e_0th_1,m), (xth_2,n)\bigr), & \textrm{if } t_1=t_2=t, \\
\displaystyle\sum_{y\in Y\setminus\{e_0\},n\in\N} q\bigl( (e_0th_1,m), (yt^{-1}h_2,n)\bigr), & \textrm{if } t_1=t_2^{-1}=t, \\
\displaystyle\sum_{y\in Y,n\in\N} q\bigl( (e_0t^{-1}h_1,m), (yt^{-1}h_2,n)\bigr), & \textrm{if } t_1=t_2=t^{-1}, \\
\displaystyle\sum_{x\in X\setminus\{e_0\},n\in\N} q\bigl( (e_0t^{-1}h_1,m), (xth_2,n)\bigr), & \textrm{if } t_1=t_2^{-1}=t^{-1},\\
\end{cases}
$$
where the quantities on the left do not depend on   $m$ as long as $(e_0t_1h_1,m)\in\mathbb{D}$. Due to the finite state space of $(\mathbf{h}_k)_{k\in\N}$, this process is positive recurrent and possesses an invariant probability measure $\nu_{\mathbf{h}}$. For $(w_1t_1h_1,n)\in\mathbb{D}$, set
\begin{equation}\label{def:pi}
\pi(w_1t_1h_1,n):=
\sum_{t_0h_0\in \mathcal{D}_0}\nu_{\mathbf{h}}(t_0h_0)q\bigl( (e_0t_0h_0,m), (w_1t_1h_1,n)\bigr).
\end{equation}
\begin{lemma}
$\pi$ is an invariant probability measure of $(\W_{k},\mathbf{i}_k)_{k\in\N}$. In particular, $(\W_{k},\mathbf{i}_k)_{k\in\N}$ is a positive recurrent Markov chain on $\mathbb{D}$.
\end{lemma}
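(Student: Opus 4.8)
The plan is to exploit the two-level structure that has already been set up. By the observation recorded immediately after Proposition~\ref{lem:exit-time-chain}, the transition probability $q\bigl((w_1t_1h_1,m),(w_2t_2h_2,n)\bigr)$ depends on the source state only through $t_1h_1\in\mathcal{D}_0$; write $\bar{q}\bigl(t_1h_1,(w_2t_2h_2,n)\bigr)$ for this common value, so that $\bar{q}(t_0h_0,\cdot)=q\bigl((e_0t_0h_0,m),\cdot\bigr)$ whenever $(e_0t_0h_0,m)\in\mathbb{D}$. Then definition~(\ref{def:pi}) reads $\pi=\nu_{\mathbf{h}}\,\bar{q}$; that is, $\pi$ is precisely the law of $(\W_{k+1},\mathbf{i}_{k+1})$ when $\mathbf{h}_k$ is distributed according to the stationary law $\nu_{\mathbf{h}}$ of the finite-state chain $(\mathbf{h}_k)_{k\in\N}$. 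Informally, the chain $(\W_k,\mathbf{i}_k)_{k\in\N}$ is obtained by running $(\mathbf{h}_k)_{k\in\N}$ and decorating each step, and it is this factorisation that makes the verification routine.

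First I would check that $\pi$ is a probability measure. Nonnegativity is immediate, and since all the terms are nonnegative every interchange of summation below is justified by Tonelli's theorem. Using that each row of $q$ sums to $1$ (Proposition~\ref{lem:exit-time-chain}) and that $\nu_{\mathbf{h}}$ is a probability measure,
\[
\sum_{(wt'h,n)\in\mathbb{D}}\pi(wt'h,n)=\sum_{t_0h_0\in\mathcal{D}_0}\nu_{\mathbf{h}}(t_0h_0)\sum_{(wt'h,n)\in\mathbb{D}}\bar{q}\bigl(t_0h_0,(wt'h,n)\bigr)=\sum_{t_0h_0\in\mathcal{D}_0}\nu_{\mathbf{h}}(t_0h_0)=1;
\]
here one uses that for every $t_0h_0\in\mathcal{D}_0$ there is at least one $m$ with $(e_0t_0h_0,m)\in\mathbb{D}$, which follows as in the proof of Proposition~\ref{lem:exit-time-chain}.

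The heart of the argument is the marginalisation identity: for every $t_1h_1\in\mathcal{D}_0$,
\[
\sum_{(w_1,m)\,:\,(w_1t_1h_1,m)\in\mathbb{D}}\pi(w_1t_1h_1,m)=\nu_{\mathbf{h}}(t_1h_1).
\]
To prove it, substitute $\pi=\nu_{\mathbf{h}}\,\bar{q}$ and observe that summing $\bar{q}\bigl(t_0h_0,(w_1t_1h_1,n)\bigr)$ over all $w_1$ and $n$ with $(w_1t_1h_1,n)\in\mathbb{D}$ reproduces, case by case, exactly the quantity $q_{\mathbf{h}}(t_0h_0,t_1h_1)$ defined just before~(\ref{def:pi}) --- the exclusions $X\setminus\{e_0\}$ and $Y\setminus\{e_0\}$ appearing there are precisely the no-cancellation constraint $t_1w_2t_2\neq e$ built into $q$, and they do not alter the sum since $q$ already vanishes on the excluded pairs. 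Hence the $\mathbf{h}$-marginal of $\pi$ is $\nu_{\mathbf{h}}q_{\mathbf{h}}=\nu_{\mathbf{h}}$ by stationarity of $\nu_{\mathbf{h}}$. Invariance of $\pi$ is then one line: for $(w_2t_2h_2,n)\in\mathbb{D}$,
\[
\sum_{(w_1t_1h_1,m)\in\mathbb{D}}\pi(w_1t_1h_1,m)\,q\bigl((w_1t_1h_1,m),(w_2t_2h_2,n)\bigr)=\sum_{t_1h_1\in\mathcal{D}_0}\Bigl(\sum_{w_1,m}\pi(w_1t_1h_1,m)\Bigr)\bar{q}\bigl(t_1h_1,(w_2t_2h_2,n)\bigr),
\]
which by the marginalisation identity equals $\sum_{t_1h_1\in\mathcal{D}_0}\nu_{\mathbf{h}}(t_1h_1)\,\bar{q}\bigl(t_1h_1,(w_2t_2h_2,n)\bigr)=\pi(w_2t_2h_2,n)$ by~(\ref{def:pi}).

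Since $(\W_k,\mathbf{i}_k)_{k\in\N}$ is irreducible on $\mathbb{D}$ by Proposition~\ref{lem:exit-time-chain} and now carries the invariant probability measure $\pi$, it is positive recurrent, and $\pi$ is its unique stationary distribution. I expect the main obstacle to be bookkeeping rather than anything conceptual: one must genuinely verify that $q\bigl((w_1t_1h_1,m),\cdot\bigr)$ is independent of $(w_1,m)$ on $\mathbb{D}$, that the $\mathbf{h}$-marginal of $\bar{q}(t_0h_0,\cdot)$ matches $q_{\mathbf{h}}(t_0h_0,\cdot)$ with the $e_0$-exclusions lined up correctly, and that every rearrangement of series involves only nonnegative terms.
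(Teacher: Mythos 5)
Your proof is correct and follows essentially the same route as the paper: both arguments rest on the fact that $q$ depends on the source state only through $t_1h_1$, that its $\mathbf{h}$-marginal reproduces $q_{\mathbf{h}}$, and that $\nu_{\mathbf{h}}$ is stationary for $(\mathbf{h}_k)_{k\in\N}$. Extracting the marginalisation identity as a separate step (and checking total mass one) is just a cleaner packaging of the same computation the paper carries out in a single chain of equalities.
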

\begin{proof}
Let be $(w_1t_1h_1,n)\in \mathbb{D}$. Then:
\begin{eqnarray*}
&&\sum_{(w_0t_0h_0,m)\in \mathbb{D}} \pi(w_0t_0h_0,m) q\bigl( (w_0t_0h_0,m), (w_1t_1h_1,n)\bigr)\\
&=& \sum_{(w_0t_0h_0,m)\in \mathbb{D}} \sum_{t'h'\in \mathcal{D}_0} \nu_{\mathbf{h}}(t'h') q\bigl( (t'h',m'), (w_0t_0h_0,m)\bigr) q\bigl( (w_0t_0h_0,m), (w_1t_1h_1,n)\bigr) 
\end{eqnarray*}
\begin{eqnarray*}
&=& \sum_{h_0\in B} \underbrace{\sum_{t'h'\in \mathcal{D}_0} \nu_{\mathbf{h}}(t'h') \underbrace{\sum_{\substack{x\in X,\\ m\in\N}}q\bigl( (e_0t'h',m'), (xth_0,m)\bigr)}_{=q_\mathbf{h}(t'h',th_0)}}_{=\nu_\mathbf{h}(th_0)} \underbrace{q\bigl( (xth_0,m), (w_1t_1h_1,n)\bigr)}_{=q\bigl( (e_0th_0,m_0), (w_1t_1h_1,n)\bigr)} \\
&& +\sum_{h_0\in A} \underbrace{\sum_{t'h'\in \mathcal{D}_0} \nu_{\mathbf{h}}(t'h') \underbrace{\sum_{\substack{y\in Y,\\ m\in\N}}q\bigl( (e_0t'h',m'), (yt^{-1}h_0,m)\bigr)}_{=q_\mathbf{h}(t'h',t^{-1}h_0)}}_{=\nu_\mathbf{h}(t^{-1}h_0)} \underbrace{q\bigl( (yt^{-1}h_0,m), (w_1t_1h_1,n)\bigr)}_{=q\bigl( (e_0t^{-1}h_0,m_0), (w_1t_1h_1,n)\bigr)} \\
&=& \sum_{t_0h_0\in \mathcal{D}_0}\nu_\mathbf{h}(t_0h_0) q\bigl( (e_0t_0h_0,m), (w_1t_1h_1,n)\bigr) = \pi(w_1t_1h_1,n).
\end{eqnarray*}
Above we have chosen $m\in\N$ such that $(e_0t^{\pm 1}h_0,m)\in\mathbb{D}$; the exact value of $m$, however, does not play any role.
\end{proof}
Now we can prove:
\begin{lemma}\label{lem:sum-finite}
For all $s\in\N$,
$$
\Lambda_s:=\sum_{(w_1t_1h_1,m)\in \mathbb{D}} m^s\cdot \pi(w_1t_1h_1,m)<\infty.
$$
\end{lemma}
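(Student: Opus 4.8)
The plan is to show that, under the invariant measure $\pi$, the increment variable $\i{k}$ has an exponentially decaying tail, so that all of its polynomial moments $\Lambda_s$ are finite at once. Concretely I would isolate a single power series whose $n$-th coefficient dominates the probability that $(\W_k,\i{k})$ makes a step of size $n$, and show it has radius of convergence $>1$ by comparison with the series $\mathcal{K}(z)$ of Lemma \ref{lem:K-convergence}.

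\textbf{Step 1: reduce to one step of the exit chain.} By formula (\ref{def:pi}) for $\pi$, the remark after Proposition \ref{lem:exit-time-chain} that $q\bigl((e_0t_0h_0,\bar m),(w_1t_1h_1,n)\bigr)$ does not depend on $\bar m$, and Tonelli (all terms nonnegative),
\[
\Lambda_s=\sum_{t_0h_0\in\mathcal{D}_0}\nu_{\mathbf{h}}(t_0h_0)\sum_{(w_1t_1h_1,n)\in\mathbb{D}}n^s\,q\bigl((e_0t_0h_0,\bar m),(w_1t_1h_1,n)\bigr).
\]
Since $\mathcal{D}_0$ is finite, it suffices to bound the inner sum for each $t_0h_0$ with $\nu_{\mathbf{h}}(t_0h_0)>0$. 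Inserting the transition probabilities from Proposition \ref{lem:exit-time-chain}, using $\xi(t_1h_1)\le 1$ and $\xi(t_0h_0)>0$ (Lemma \ref{lem:xi>0} if $A,B\subsetneq G_0$, Lemma \ref{lem:xi>0-2} otherwise, where one uses that $t_0h_0$ lies in the support of $\nu_{\mathbf{h}}$), the inner sum is at most $\xi(t_0h_0)^{-1}\sum_{n\ge 1}n^s a_n$, where
\[
a_n:=\sum_{w_1t_1h_1}\P_{t_0h_0}\bigl[X_n=t_0w_1t_1h_1,\ |X_{n-1}|_t=1,\ \forall n'<n:|X_{n'}|_t\ge 1\bigr].
\]

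\textbf{Step 2: dominate $\sum_n a_nz^n$ by $z\,\mathcal{K}(z)$.} The events summed in $a_n$ are pairwise disjoint and their union is contained in $\{\,\forall n'\le n-1:X_{n'}\notin G_0,\ |X_{n-1}|_t=1\,\}$ (recall $|g|_t=0\iff g\in G_0$). Because a single step of $(X_n)$ changes $|X_n|_t$ by at most $1$, a trajectory keeping $|X_{n'}|_t\ge 1$ for $n'\le n-1$ never meets $G_0$, hence stays inside the subtree of copies of $G_0$ sitting beyond $t_0G_0$, and in that subtree the only copy of $t$-length $1$ is $t_0G_0$; thus $|X_{n-1}|_t=1$ forces $X_{n-1}\in t_0G_0$. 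Consequently $a_n\le \P_{t_0h_0}\bigl[X_{n-1}\in t_0G_0,\ \forall n'\le n-1:X_{n'}\notin G_0\bigr]$, and, after the substitution $m=n-1$ and a left translation carrying $t_0h_0$ to $e$ (which carries $t_0G_0$ to $G_0$ and $G_0$ to a neighbouring copy $C$ of $G_0$), transitivity gives
\[
\sum_{n\ge 1}a_nz^n\le z\sum_{m\ge 0}z^m\,\P_{e}\bigl[X_m\in G_0,\ \forall n'\le m:X_{n'}\notin C\bigr]\le z\sum_{m\ge 0}z^m\,\P_e[X_m\in G_0]=z\,\mathcal{K}(z)
\]
for real $z\ge 0$ in the common disc of convergence. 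By Lemma \ref{lem:K-convergence}, $z\,\mathcal{K}(z)$ has radius of convergence $\rho>1$; since $a_n\ge 0$, so does $\sum_n a_nz^n$, hence $a_n=O(\rho'^{-n})$ for any $\rho'\in(1,\rho)$.

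Finishing is then routine: geometric decay of $a_n$ yields $\sum_{n\ge 1}n^s a_n<\infty$ for every $s$, and summing the finitely many contributions over $t_0h_0\in\mathcal{D}_0$ gives $\Lambda_s<\infty$. The main obstacle is Step 2, namely recognizing that conditioning the walk to keep $t$-length $\ge 1$ with $t$-length exactly $1$ one step before the jump pins it to the single copy $t_0G_0$ without having returned to $G_0$, so the relevant quantity is the Green function of the walk killed on $G_0$ restricted to that copy — which, by transitivity and monotonicity of Green functions under additional killing, is dominated by $\mathcal{K}(z)$, the series that Lemma \ref{lem:K-convergence} already places in the regime of radius of convergence strictly larger than $1$.
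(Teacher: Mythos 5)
Your proposal is correct and follows essentially the same route as the paper: expand $\Lambda_s$ via the definition of $\pi$ and the one-step transition probabilities of $(\W_k,\i{k})$, drop the endpoint and avoidance constraints to bound the relevant probability by $\P[X_{m-1}\in G_0]$ after a translation by transitivity, and invoke Lemma \ref{lem:K-convergence}. The only cosmetic difference is at the end: the paper controls $\sum_m m^s\,\P[X_{m-1}\in G_0]$ by differentiating $z\,\mathcal{K}(z)$ $s$ times at $z=1$, whereas you extract geometric decay of the coefficients from the radius of convergence being $>1$ — these are equivalent.
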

\begin{proof}
We prove finiteness only in the case $s=1$.
Set $H(t):=A$ and $H(t^{-1}):=B$. Rewriting the above sum yields:
\begin{eqnarray*}
&& \sum_{(w_1t_1h_1,m)\in \mathbb{D}} m\cdot \pi(w_1t_1h_1,m)\\
&=& \sum_{(w_1t_1h_1,m)\in \mathbb{D}} \sum_{t_0h_0\in \mathcal{D}_0} \nu_{\mathbf{h}}(t_0h_0) \cdot q\bigl( (e_0t_0h_0,m_0),(w_1t_1h_1,m)\bigr)\cdot m \\
&=&  \sum_{t_0h_0\in \mathcal{D}_0} \nu_{\mathbf{h}}(t_0h_0) \sum_{(w_1t_1h_1,m)\in \mathbb{D}} q\bigl( (e_0t_0h_0,m_0),(w_1t_1h_1,m)\bigr)\cdot m \\
&=& \sum_{t_0h_0\in \mathcal{D}_0} \nu_{\mathbf{h}}(t_0h_0) \sum_{\substack{(w_1t_1h_1,m)\in \mathbb{D}:\\ t_0w_1t_1\neq e}}m\cdot  \frac{\xi(t_1h_1)}{\xi(t_0h_0)} \P_{t_0h_0}\left[\substack{\forall m\leq n: X_m\notin H(t_0), X_{m-1}\in t_0G_0,\\ X_m=t_0w_1t_1h_1}\right]\\
&\leq & \sum_{t_0h_0\in \mathcal{D}_0} \nu_{\mathbf{h}}(t_0h_0) \sum_{m\in\N}m\cdot  \frac{\max_{t_1h_1\in \mathcal{D}_0} \xi(t_1h_1)}{\xi(t_0h_0)} \underbrace{\P_{t_0h_0}[X_{m-1}\in t_0G_0]}_{=\P[X_{m-1}\in G_0]}\\ 
&\leq & \sum_{t_0h_0\in \mathcal{D}_0} \nu_{\mathbf{h}}(t_0h_0) \frac{\max_{t_1h_1\in \mathcal{D}_0} \xi(t_1h_1)}{\xi(t_0h_0)} \sum_{m\geq 1} m\cdot \P[X_{m-1}\in G_0]\\
&\leq & \sum_{t_0h_0\in \mathcal{D}_0} \nu_{\mathbf{h}}(t_0h_0) \frac{\max_{t_1h_1\in \mathcal{D}_0} \xi(t_1h_1)}{\xi(t_0h_0)}\cdot \frac{\partial}{\partial z}\bigl[ z\cdot \mathcal{K}(z)\bigr]\Bigl|_{z=1}  < \infty,
\end{eqnarray*}
due to Lemma \ref{lem:K-convergence}. In the case $s>1$, the reasoning is analogously, where we use the fact that $\mathcal{K}(z)$ is arbitrarily often differentiable at $z=1$.
\end{proof}
We set $\Lambda:=\Lambda_1$. The last lemma leads to our first results, where we follow a reasoning, which was similarly used also, e.g., in \cite{woess2} and \cite{gilch:07,gilch:08,gilch:11}.
\begin{proposition}\label{thm:t-drift}
The rate of escape w.r.t. the $t$-length exists and satisfies
$$
\lim_{n\to\infty} \frac{|X_n|_t}{n}=\frac{1}{\Lambda}\quad \textrm{almost surely}.
$$
\end{proposition}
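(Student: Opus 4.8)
The plan is to transfer the law of large numbers for the Markov chain $(\W_k,\i{k})_{k\in\N}$ from Proposition \ref{lem:exit-time-chain} to the $t$-length $|X_n|_t$ by a sandwiching argument based on the exit times $\e{k}$. First I would record the elementary facts that $\e{0}=0$, that $|X_{\e{k}}|_t=k$ for every $k$ (by minimality of $\e{k}$ and the fact that a single step of $(X_n)_{n\in\N_0}$ changes the $t$-length by at most one), hence $\i{k}=\e{k}-\e{k-1}\ge 1$ and $\e{k}=\sum_{j=1}^k\i{j}$. Since $(\W_k,\i{k})_{k\in\N}$ is an irreducible, aperiodic, positive recurrent Markov chain on $\mathbb D$ with stationary distribution $\pi$, and the coordinate function $(w_1t_1h_1,m)\mapsto m$ lies in $L^1(\pi)$ with $\pi$-mean $\Lambda$ by Lemma \ref{lem:sum-finite}, the ergodic theorem for positive recurrent Markov chains yields
\[
\frac{\e{k}}{k}=\frac1k\sum_{j=1}^k\i{j}\;\longrightarrow\;\mathbb E_\pi[\i{1}]=\Lambda\qquad\text{almost surely.}
\]
(Since $\i{1}<\infty$ a.s.\ by Proposition \ref{lem:infinite-words}, a possible mismatch of the first block with $\mathbb D$ does not affect this Ces\`{a}ro limit.) A routine Ces\`{a}ro estimate --- if $\frac1k\sum_{j\le k}a_j$ converges to a finite limit, then $a_k/k\to 0$ --- then gives $\i{k}/k\to 0$ almost surely.

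Next, for each $n\in\N$ I would let $k(n)$ be the unique index with $\e{k(n)}\le n<\e{k(n)+1}$; this is well defined because the $\e{k}$ are finite (Proposition \ref{lem:infinite-words}) and strictly increasing, and $k(n)\to\infty$ as $n\to\infty$. From $n\ge\e{k(n)}$ one obtains the lower bound $|X_n|_t\ge k(n)$, and from $|X_{\e{k(n)}}|_t=k(n)$ together with the one-step Lipschitz property of $|\cdot|_t$ one obtains the upper bound $|X_n|_t\le k(n)+(n-\e{k(n)})\le k(n)+\i{k(n)+1}$. Dividing by $n$,
\[
\frac{k(n)}{n}\;\le\;\frac{|X_n|_t}{n}\;\le\;\frac{k(n)}{n}+\frac{\i{k(n)+1}}{n}.
\]
Because $\e{k(n)}\le n<\e{k(n)+1}$ and both $\e{k}/k\to\Lambda$ and $\e{k+1}/(k+1)\to\Lambda$, we get $n/k(n)\to\Lambda$, i.e.\ $k(n)/n\to 1/\Lambda$; and $\i{k(n)+1}/n\le \i{k(n)+1}/\e{k(n)}\to 0$ since $\i{k}/k\to 0$ while $\e{k}/k\to\Lambda>0$. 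Passing to the limit in the sandwich yields $|X_n|_t/n\to 1/\Lambda$ almost surely.

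The step I expect to be the main obstacle is the upper bound in the sandwich: after time $\e{k}$ the prefix of $t$-length $k$ of $X_n$ is frozen, but during the excursion on the time interval $[\e{k},\e{k+1})$ the walk may transiently raise its $t$-length above $k$, and one must rule out that this overshoot accumulates --- which is exactly what the crude bound $n-\e{k}<\i{k+1}$ together with $\i{k}/k\to 0$ accomplishes. A secondary point needing care is the application of the Markov-chain ergodic theorem with an initial law that need not equal $\pi$; this is harmless, since the chain is positive recurrent and, from the second block on, evolves in $\mathbb D$.
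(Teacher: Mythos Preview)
Your proposal is correct and follows essentially the same route as the paper: apply the ergodic theorem to the positive recurrent chain $(\W_k,\i{k})$ to obtain $\e{k}/k\to\Lambda$, introduce the random index $k(n)=\mathbf{k}(n)$ with $\e{k(n)}\le n<\e{k(n)+1}$, and control the overshoot $|X_n|_t-k(n)$ by $n-\e{k(n)}\le \i{k(n)+1}$. The paper's write-up differs only cosmetically, decomposing $|X_n|_t/n$ as a telescoping sum plus a product of three factors rather than using your explicit two-sided sandwich, and it does not spell out the Ces\`aro step $\i{k}/k\to 0$ or the identity $|X_{\e{k}}|_t=k$ as carefully as you do.
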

\begin{proof}
First, observe that the ergodic theorem for positiv recurrent Markov chains together with Lemma \ref{lem:sum-finite} yields
$$
\frac{\e{k}}{k}=\frac{1}{k} \sum_{l=1}^k \mathbf{i}_l \xrightarrow{k\to\infty} \Lambda \quad \textrm{ almost surely}.
$$
Define $\mathbf{k}(n):=\max\{k\in\N \mid \e{k}\leq n\}$. Then we obtain  almost surely:
$$
1\leq \frac{n}{\e{\mathbf{k}(n)}} \leq \frac{\e{\mathbf{k}(n)+1}}{\e{\mathbf{k}(n)}} = \frac{\e{\mathbf{k}(n)+1}}{\mathbf{k}(n)+1}\frac{\mathbf{k}(n)+1}{\e{\mathbf{k}(n)}}\xrightarrow{n\to\infty} 1,
$$
hence
$$
\lim_{n\to\infty} \frac{\e{\mathbf{k}(n)}}{n}=1 \textrm{ almost surely.}
$$
This yields:
$$
0\leq \frac{|X_n|_t-|X_{\e{\mathbf{k}(n)}}|_t}{n} \leq  \frac{n-\e{\mathbf{k}(n)}}{n}= 1-\frac{\e{\mathbf{k}(n)}}{n}\xrightarrow{n\to\infty} 0 \ \textrm{ almost surely.}
$$
Finally, we obtain:
\begin{equation}
\frac{|X_n|_t}{n}=\underbrace{\frac{|X_n|_t-|X_{\e{\mathbf{k}(n)}}|_t}{n}}_{\to 0}+ \underbrace{\frac{|X_{\e{\mathbf{k}(n)}}|_t}{\mathbf{k}(n)}}_{=1} \underbrace{\frac{\mathbf{k}(n)}{\e{\mathbf{k}(n)}}}_{\to \Lambda^{-1}}\underbrace{\frac{\e{\mathbf{k}(n)}}{n}}_{\to 1}
\xrightarrow{n\to\infty} \frac{1}{\Lambda}\quad \textrm{almost surely.}\label{equ:thm-equ}
\end{equation}
\end{proof}
\begin{corollary}\label{cor:speed-word-length}
The rate of escape w.r.t. the normal form word length exists and satisfies
$$
\lim_{n\to\infty} \frac{\Vert X_n\Vert}{n}=\frac{2}{\Lambda}.
$$
\end{corollary}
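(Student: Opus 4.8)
The plan is to read the corollary off directly from Proposition~\ref{thm:t-drift} by comparing the two length statistics $\Vert X_n\Vert$ and $|X_n|_t$ on a word-by-word basis, so that only an elementary deterministic identity is needed on top of the $t$-length drift.

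First I would record that identity. If $g\in G$ has normal form $g=g_1t_1g_2t_2\cdots g_mt_mg_{m+1}$ as in~(\ref{equ:normalform}), with $m=|g|_t$, then its normal-form spelling consists of the $m$ stable letters $t_1,\dots,t_m$, the $m$ coset representatives $g_1,\dots,g_m$ and the terminal factor $g_{m+1}$; counting each of these as one letter of $\mathcal{A}$ (recalling from the discussion after the normal-form lemma that the identity letter $e_0$ is counted), one gets the exact relation $\Vert g\Vert=m+m+1=2|g|_t+1$. If instead one adopts the alternative convention in which occurrences of $e_0$ are not counted, the corollary is subsumed in the general results on length functions: it follows from Theorem~\ref{thm:drift} applied to the length function $\ell$ with $\ell(e_0)=0$ and $\ell\equiv 1$ on $\bigl(G_0\setminus\{e_0\}\bigr)\cup\{t,t^{-1}\}$. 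Since, as noted, the present setting does not need to distinguish the two conventions, I would carry out the argument with the exact identity $\Vert g\Vert=2|g|_t+1$.

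It then only remains to substitute $g=X_n$. Almost surely, for every $n$ we have $\Vert X_n\Vert=2|X_n|_t+1$, hence
$$
\frac{\Vert X_n\Vert}{n}=2\,\frac{|X_n|_t}{n}+\frac{1}{n}.
$$
Letting $n\to\infty$ and invoking Proposition~\ref{thm:t-drift}, which gives $|X_n|_t/n\to 1/\Lambda$ almost surely, yields $\Vert X_n\Vert/n\to 2/\Lambda$ almost surely; in particular the rate of escape w.r.t. the normal-form word length exists and equals $2/\Lambda$.

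There is essentially no remaining obstacle at this stage: all of the probabilistic content has already been absorbed into Proposition~\ref{thm:t-drift} — namely the construction of the positive-recurrent exit chain $(\W_{k},\i{k})$ in Proposition~\ref{lem:exit-time-chain}, its invariant measure together with the integrability estimate of Lemma~\ref{lem:sum-finite}, and the sandwiching of $n$ between consecutive exit times. The one place that calls for a moment of care is the bookkeeping convention for the identity letter $e_0$: once that is fixed, the identity $\Vert g\Vert=2|g|_t+1$ is exact and the corollary follows simply by dividing by $n$ and passing to the limit.
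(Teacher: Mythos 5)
Your argument is correct and essentially the paper's own: the paper likewise deduces the corollary from Proposition \ref{thm:t-drift} via the deterministic comparison $2|g|_t-1\leq \Vert g\Vert\leq 2|g|_t+1$, of which your exact identity $\Vert g\Vert=2|g|_t+1$ (under the convention that $e_0$ is counted) is a sharpening. Only your parenthetical aside is slightly off --- under the non-counting convention, Theorem \ref{thm:drift} with $\ell(e_0)=0$ yields existence but the value $\Delta/\Lambda$, which need not equal $2/\Lambda$ because letters $e_0$ can occur with positive frequency in the stabilising prefix --- but since you explicitly do not rely on that route, the proof stands.
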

\begin{proof}
This is an immediate consequence of Proposition \ref{thm:t-drift} together with the fact that 
$$
2|g|_t-1 \leq \Vert g\Vert \leq 2|g|_t+1\quad \textrm{ for all } g\in G. 
$$
We remark that existence follows also from Kingman's subadditive ergodic theorem.
\end{proof}

Now we extend Proposition \ref{thm:t-drift} to existence of the rate of escape w.r.t. arbitrary length functions $\ell$ of polynomial growth. 
For $(w_0t_0h_0,m)\in\mathbb{D}$, define $\widetilde{\ell}(w_0t_0h_0,m):=\ell(w_0t_0)$ and set
$$
\Delta:=\int \widetilde{\ell}\, d\pi = \sum_{(w_0t_0h_0,m)\in\mathbb{D}} \ell(w_0t_0)\cdot  \pi(w_0t_0h_0,n) <\infty,
$$
where finiteness follows from Lemma \ref{lem:sum-finite}.
We obtain:
\begin{theorem}\label{thm:drift}
Let $\ell\not\equiv 0$ be a length function on $G_0\cup\{t,t^{-1}\}$ which is of polynomial growth. 
Then the rate of escape w.r.t. $\ell$ exists and is given by the almost sure positive constant number
$$
\lambda_\ell=\lim_{n\to\infty} \frac{\ell(X_n)}{n}=\frac{\Delta}{\Lambda}>0.
$$
\end{theorem}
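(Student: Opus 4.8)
The plan is to split $\ell(X_n)$ into a \emph{stabilised} part, governed by the ergodic theorem for the Markov chain $(\W_{k},\i{k})_{k\in\N}$ of Proposition~\ref{lem:exit-time-chain}, plus an \emph{active} part which we show is negligible. As in the proof of Proposition~\ref{thm:t-drift}, put $\mathbf{k}(n):=\max\{k\in\N\mid \e{k}\le n\}$; since $\e{k}<\infty$ for every $k$ (Proposition~\ref{lem:infinite-words}) we have $\mathbf{k}(n)\to\infty$ almost surely. By that same proposition the prefix of $X_n$ of $t$-length $\mathbf{k}(n)$ has stabilised by time $\e{\mathbf{k}(n)}\le n$, so, as a reduced word, $X_n=[X_{\e{\mathbf{k}(n)}}]\cdot r_n$, where $r_n$ is the normal form of $h\,\zeta_{\e{\mathbf{k}(n)}+1}\cdots\zeta_n$ and $h\in A\cup B$ is the last $G_0$-syllable of $X_{\e{\mathbf{k}(n)}}$. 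Because this concatenation is already in normal form, additivity of $\ell$ gives $\ell(X_n)=\ell\bigl([X_{\e{\mathbf{k}(n)}}]\bigr)+\ell(r_n)$ with $\ell(r_n)\ge0$.

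For the stabilised part, note that by construction of the $\W_{k}$ one has, for every $k$, $\ell\bigl([X_{\e{k}}]\bigr)=\sum_{i=1}^{k}\bigl(\ell(g_i)+\ell(t_i)\bigr)=\sum_{i=1}^{k}\widetilde{\ell}(\W_{i},\i{i})$, where $g_1t_1\cdots g_kt_k=[X_{\e{k}}]$. Applying the ergodic theorem for the positive recurrent, irreducible Markov chain $(\W_{k},\i{k})_{k\in\N}$ to the function $\widetilde{\ell}$ — whose integrability is exactly $\Delta=\int\widetilde{\ell}\,d\pi<\infty$, resting on polynomial growth and Lemma~\ref{lem:sum-finite} — yields $\frac1k\ell([X_{\e{k}}])\to\Delta$ almost surely. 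Combining this with $\mathbf{k}(n)\to\infty$ and $\mathbf{k}(n)/n=|X_{\e{\mathbf{k}(n)}}|_t/n\to 1/\Lambda$ (Proposition~\ref{thm:t-drift}) gives
$$
\frac{\ell\bigl([X_{\e{\mathbf{k}(n)}}]\bigr)}{n}=\frac{\ell\bigl([X_{\e{\mathbf{k}(n)}}]\bigr)}{\mathbf{k}(n)}\cdot\frac{\mathbf{k}(n)}{n}\xrightarrow{n\to\infty}\frac{\Delta}{\Lambda}\quad\text{almost surely.}
$$

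It remains to show $\ell(r_n)/n\to0$ almost surely, which is the crux. A per-step Lipschitz bound is useless, since one generator step can change $\ell$ by as much as (roughly) the $(\kappa-1)$-st power of the length of the current last $G_0$-syllable, which is unbounded. Instead, $r_n$ is produced by exactly $j_n:=n-\e{\mathbf{k}(n)}$ random walk steps starting from $h\in A\cup B$; a bookkeeping argument that tracks how the total word length of the $G_0$-syllables and the $t$-length of the suffix evolve under right multiplication by a single generator (using crucially that $A,B$ are finite, so only a bounded amount of $G_0$-content is redistributed through a stable letter at each step) shows that both grow at most linearly in $j_n$. Feeding this into the polynomial growth bound for $\ell$ produces a deterministic constant $C$ with $\ell(r_n)\le C\,(1+j_n)^{\kappa+1}$. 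Now $j_n<\i{\mathbf{k}(n)+1}$ by definition of $\mathbf{k}(n)$, and the ergodic theorem applied to $m\mapsto m^{\kappa+1}\in L^1(\pi)$ (finiteness from Lemma~\ref{lem:sum-finite}) gives $\frac1N\sum_{k=1}^N\i{k}^{\kappa+1}\to\Lambda_{\kappa+1}<\infty$, hence $\i{k}^{\kappa+1}/k\to0$ almost surely; together with $\mathbf{k}(n)\to\infty$ and $\mathbf{k}(n)/n\to1/\Lambda$ this forces $\ell(r_n)/n\to0$. Adding the two contributions yields $\ell(X_n)/n\to\Delta/\Lambda$ almost surely. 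Finally $\Lambda=\Lambda_1=\int m\,d\pi$ is finite (Lemma~\ref{lem:sum-finite}) and at least $1$, while $\Delta>0$ since $\ell\not\equiv0$ and the syllables and stable letters on which $\ell$ is positive are charged by $\pi$; hence $\lambda_\ell=\Delta/\Lambda\in(0,\infty)$.

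I expect the genuine obstacle to be precisely the estimate in the third paragraph: obtaining the polynomial-in-$j_n$ bound on $\ell(r_n)$ requires careful control of the evolving syllable structure of the suffix (creation and cancellation of $t$-syllables, redistribution of $G_0$-content across the finite subgroups $A$ and $B$ when a stable letter is appended), after which the moment bounds of Lemma~\ref{lem:sum-finite} turn everything into the required almost sure $o(n)$ statement.
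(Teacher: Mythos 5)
Your proposal is correct and follows essentially the same route as the paper: decompose $\ell(X_n)$ into the stabilised prefix $\ell([X_{\e{\mathbf{k}(n)}}])$, handled by the ergodic theorem for the positive recurrent chain $(\W_k,\i{k})_{k\in\N}$ giving $\Delta$, plus a remainder bounded polynomially in $n-\e{\mathbf{k}(n)}\le\i{\mathbf{k}(n)+1}$, which is $o(n)$ by the moment bounds of Lemma \ref{lem:sum-finite}. The paper uses the slightly sharper bound $C(n-\e{\mathbf{k}(n)})^\kappa+M(n-\e{\mathbf{k}(n)})$ in place of your $C(1+j_n)^{\kappa+1}$, but since $\Lambda_s<\infty$ for every $s$ this makes no difference.
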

\begin{proof}
We can write $X_{\e{\mathbf{k}(n)}}=g_1t_1\dots g_{\mathbf{k}(n)}t_{\mathbf{k}(n)}g'_{\mathbf{k}(n)+1}$ in normal form as in (\ref{equ:normalform}). Observe that $g'_{\mathbf{k}(n)+1}\in A\cup B$.
Then the ergodic theorem for positive recurrent Markov chain yields
$$
\lim_{n\to\infty}\frac{\ell(X_{\e{\mathbf{k}(n)}})}{\mathbf{k}(n)}
= \lim_{n\to\infty}\frac{1}{\mathbf{k}(n)}\sum_{i=1}^{\mathbf{k}(n)}\ell(g_it_i)
\xrightarrow{n\to\infty} \Delta\ \textrm{ almost surely.}
$$
By assumption on $\ell$, there are $C>0$ and $\kappa\in\N$ such that $\ell(g_0)\leq C\cdot |g_0|^\kappa$ for all $g\in G_0$.
By Lemma \ref{lem:sum-finite}, we have $\Lambda_\kappa=\lim_{n\to\infty} \frac1k \sum_{j=1}^k \mathbf{i}_j^\kappa<\infty$ almost surely. Setting $M:=\max\{\ell(t),\ell(t^{-1})\}$ we get almost surely:
\begin{eqnarray*}
0&\leq & \frac{\ell(X_n)-\ell(X_{\e{\mathbf{k}(n)}})}{n}\leq \frac{C\cdot (n-\e{\mathbf{k}(n)})^\kappa +M\cdot (n-\e{\mathbf{k}(n)})}{n}\\
&\leq & \frac{C\cdot (\e{\mathbf{k}(n)+1}-\e{\mathbf{k}(n)})^\kappa+M\cdot (\e{\mathbf{k}(n)+1}-\e{\mathbf{k}(n)})}{n}\\
&=&  \frac{C\cdot \mathbf{i}_{\mathbf{k}(n)+1}^\kappa+M\cdot \mathbf{i}_{\mathbf{k}(n)+1}}{n}
\xrightarrow{n\to\infty} 0.
\end{eqnarray*}
The rest follows as in (\ref{equ:thm-equ}). Observe that $\Delta>0$ if $\ell\not\equiv 0$.
\end{proof}

We are now able to prove Theorem \ref{thm:alternative-drift-formula}, where we derive an alternative formula for the drift $\lambda_\ell$, which will be useful in Section \ref{sec:analyticity}. 

\begin{proof}[Proof of Theorem \ref{thm:alternative-drift-formula}]
Existence of $\lambda_\ell$ was already shown in Theorem \ref{thm:drift}.\\
Recall that, for $g=g_1t_1\dots g_kt_kg_{k+1}$ in normal form, we write $[g]:=g_1t_1\dots g_kt_k$. We set $\mathbb{E}_\pi\bigl[\ell([X_{\e{2}}])-\ell([X_{\e{1}}])\bigr]$ as
\begin{eqnarray*}
&&\sum_{\substack{x=(w_1t_1h_1,m_1),\\y=(w_2t_2h_2,m_2)\in\mathbb{D}}} \pi( x)\cdot q\bigl(x,y\bigr)\cdot \bigl(\ell(w_1t_1w_2t_2)-\ell(w_1t_1) \bigr)\\
&=& \sum_{(w_2t_2h_2,m_2)\in\mathbb{D}}\pi( w_2t_2h_2,m_2)\cdot \ell(w_2t_2)
\end{eqnarray*}
and
\begin{eqnarray*}
\mathbb{E}_\pi[\e{2}-\e{1}]
&:= &\sum_{\substack{(w_1t_1h_1,m_1),\\ (w_2t_2h_2,m_2)\in\mathbb{D}}} \pi( w_1t_1h_1,m_1)\cdot q\bigl((w_1t_1h_1,m_1),(w_2t_2h_2,m_2)\bigr)\cdot m_2\\
&=& \sum_{(w_2t_2h_2,m_2)\in\mathbb{D}}\pi( w_2t_2h_2,m_2)\cdot m_2.
\end{eqnarray*}
That is, we take the expectations w.r.t. the invariant measure of the positive recurrent Markov chain $\bigl((\mathbf{W}_k,\mathbf{i}_k),(\mathbf{W}_{k+1},\mathbf{i}_{k+1})\bigr)_{k\in\N}$. Finiteness of both expectations follows from Lemma \ref{lem:sum-finite} together with at most polynomial growth of $\ell$. 
\par
By the ergodic theorem for positive recurrent Markov chains, we obtain
$$
\frac{1}{\mathbf{k}(n)}\sum_{i=1}^{\mathbf{k}(n)}\bigl(\ell([X_{\e{i}}])-\ell([X_{\e{i-1}}])\bigr)\xrightarrow{n\to\infty}\mathbb{E}_\pi\bigl[\ell([X_{\e{2}}])-\ell([X_{\e{1}}])\bigr] \ \textrm{almost surely.}
$$
Furthermore, we observe that
$$
\frac{1}{\mathbf{k}(n)}\sum_{j=2}^{\mathbf{k}(n)} \e{j}-\e{j-1}=\frac{1}{\mathbf{k}(n)}\sum_{j=2}^{\mathbf{k}(n)}  \mathbf{i}_j\xrightarrow{n\to\infty}\mathbb{E}_\pi[\mathbf{i}_2]=\mathbb{E}_\pi[\e{2}-\e{1}] \ \textrm{almost surely.}.
$$
Hence, 
$$
\frac{\e{\mathbf{k}(n)}}{\mathbf{k}(n)}\xrightarrow{n\to\infty} \mathbb{E}_\pi[\e{2}-\e{1}]=\Lambda \ \textrm{almost surely.}
$$
Since
$$
0\leq \frac{n-\e{\mathbf{k}(n)}}{\mathbf{k}(n)} \leq \frac{\e{\mathbf{k}(n)+1}-\e{\mathbf{k}(n)}}{\mathbf{k}(n)}
\xrightarrow{n\to\infty} 0 \ \textrm{almost surely},
$$
we get
$$
\frac{n}{\mathbf{k}(n)} = \frac{n-\e{\mathbf{k}(n)}}{\mathbf{k}(n)}+ \frac{\e{\mathbf{k}(n)}}{\mathbf{k}(n)} \xrightarrow{n\to\infty} \mathbb{E}_\pi[\e{2}-\e{1}] \ \textrm{almost surely.}
$$
From the proof of Theorem \ref{thm:drift} follows now the claim:
\begin{eqnarray*}
\lambda_\ell &=& \lim_{n\to\infty} \frac{\ell(X_{\e{\mathbf{k}(n)}})}{n}
= \lim_{n\to\infty} \frac{\mathbf{k}(n)}{n} \frac{1}{\mathbf{k}(n)}\sum_{i=1}^{\mathbf{k}(n)} \bigl(\ell([X_{\e{i}}])-\ell([X_{\e{i-1}}])\bigr)\\
&=& \frac{\mathbb{E}_\pi[\ell([X_{\e{2}}])-\ell([X_{\e{1}}])]}{\mathbb{E}_\pi[\e{2}-\e{1}]}\quad \textrm{almost surely.}
\end{eqnarray*}
\end{proof}

\begin{remark}\normalfont
The required condition of a length function $\ell$ of at most polynomial growth can be relaxed to the condition that
$$
\sum_{(w_0t_0h_0,n_0)\in\mathbb{D}}\max\bigl\lbrace \ell(w_0t_0),n\bigr\rbrace\cdot \pi(w_0t_0h_0,n_0)<\infty.
$$
However, this condition is in general hard to prove, because it needs good knowledge of $\pi$. Nonetheless, we may allow word length functions of the following form: let be $\varrho\in \bigl(1,R(\mathcal{K})\bigr)$, where $R(\mathcal{K})$ is the radius of convergence of $\mathcal{K}(z)$; assume that $\ell$ satisfies $\ell(g_0)\leq C\cdot \varrho^{|g_0|}$ for all $g_0\in G_0$. Then one can show analogously to Lemma \ref{lem:sum-finite} that 
$$
\sum_{(w_0t_0h_0,n_0)\in\mathbb{D}} \pi(w_0t_0h_0,n_0)\cdot \ell(w_0t_0)<\infty.
$$
Once again, $R(\mathcal{K})$ is hard to determine, so we restricted the proofs to a general class of meaningful length functions.
\end{remark}

As an application we derive an upper bound for the random walk's \textit{entropy}, which is given by the non-negative constant $h$ such that
$$
h=\lim_{n\to\infty} -\frac{1}{n}\log \pi_n(X_n) \quad \textrm{ almost surely},
$$
where $\pi_n$ is the distribution of $X_n$. Again, existence of the entropy  is well-known due to Kingman's subadditive ergodic theorem. 
\par
For $g\in G$, define \mbox{$F(e,g):=\P[\exists n\in \N: X_n=g]$.} We choose now the \textit{Greenian distance} as length function, that is, 
$$
\ell(g):=\ell_G(g):=-\log F(e,g)\quad \textrm{ for  } g\in G_0\cup\{t,t^{-1}\};
$$ 
compare with  \cite{blachere-haissinsky-mathieu}. If the minimal single step transition probability is given by \mbox{$\varepsilon_0:=\min\{p(e,g)\mid g\in G, p(e,g)>0 \}$,} then
$$
\ell_G(g)=-\log F(e,g)\leq -\log \varepsilon_0^{|g|} =-|g|\log\varepsilon_0,
$$ 
that is, $\ell$ is of polynomial growth, and therefore $\lambda_{\ell_G}$ exists due to Theorem \ref{thm:alternative-drift-formula}. Moreover, we get a simple upper bound for the entropy:
\begin{corollary}\label{cor:entropy}
$\lambda_{\ell_G} \geq h$.
\end{corollary}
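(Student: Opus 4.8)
The plan is to compare the length function $\ell_G$, extended to $G$ through the normal form (\ref{equ:normalform}), with the genuine Greenian metric $d_G(e,g):=-\log F(e,g)$ on $G$, and then to invoke the result of \cite{blachere-haissinsky-mathieu} identifying the rate of escape with respect to $d_G$ with the asymptotic entropy $h$. Concretely, I would first prove the pointwise domination $\ell_G(g)\ge -\log F(e,g)$ for every $g\in G$, and only afterwards divide by $n$ and pass to the limit.

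For the domination step the key observation is submultiplicativity of the hitting probabilities: for all $a,b\in G$,
$$
F(e,ab)\ \ge\ F(e,a)\cdot F(a,ab)\ =\ F(e,a)\cdot F(e,b),
$$
where the inequality follows from the strong Markov property applied at the first hitting time of $a$ (one way of ever reaching $ab$ is to first reach $a$ and then, starting afresh from $a$, to reach $ab$), and the last equality is group invariance. Writing $g=g_1t_1g_2t_2\cdots g_nt_ng_{n+1}$ in normal form and iterating this inequality along the factors gives
$$
F(e,g)\ \ge\ \Bigl(\prod_{k=1}^{n}F(e,g_k)\,F(e,t_k)\Bigr)\cdot F(e,g_{n+1}),
$$
so that, taking $-\log$, one obtains precisely $-\log F(e,g)\le \sum_{k=1}^n\bigl(\ell_G(g_k)+\ell_G(t_k)\bigr)+\ell_G(g_{n+1})=\ell_G(g)$. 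All factors above are strictly positive, since $\mathrm{supp}(\mu_0)=S_0$ generates $G_0$ as a semigroup and $t^{\pm 1}\in\mathrm{supp}(\mu)$, so every logarithm is finite.

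It then remains to divide by $n$ and let $n\to\infty$. As noted before the statement, $\ell_G$ is of polynomial growth, so $\lambda_{\ell_G}=\lim_n \ell_G(X_n)/n$ exists by Theorem \ref{thm:drift}; combined with the pointwise bound above this yields
$$
\lambda_{\ell_G}\ =\ \lim_{n\to\infty}\frac{\ell_G(X_n)}{n}\ \ge\ \limsup_{n\to\infty}\frac{-\log F(e,X_n)}{n}.
$$
By \cite{blachere-haissinsky-mathieu} the limit on the right exists almost surely and equals the asymptotic entropy $h$ — the ``Green speed equals entropy'' identity, applicable here because $\mu$ is finitely supported and hence has finite entropy — which gives $\lambda_{\ell_G}\ge h$. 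The only point that needs care is the \emph{direction} of the comparison in the second paragraph: the normal-form extension of $\ell_G$ can only be larger than the genuine Green distance, since it charges each syllable separately and allows no cancellation between consecutive syllables, so it produces an \emph{upper} bound for $h$; the naive estimate $F(e,g)\ge p^{(n)}(e,g)$ points the opposite way and by itself is not enough, which is why the identification of the Green speed with the entropy is really needed.
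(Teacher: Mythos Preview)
Your proof is correct and, in essence, follows the same submultiplicativity idea as the paper, but it is packaged more cleanly. The paper works along the subsequence of exit times $\e{n}$: it invokes the Benjamini--Peres identity $h=\lim_n -\tfrac1n\log G(e,X_n)$, bounds $G(e,X_{\e{n}})$ from below by a product of first-visit probabilities along the points $X_{\e{0}},X_{\e{1}},\dots,X_{\e{n}}$, and then rewrites each factor via the normal-form syllables. You instead prove the pointwise inequality $\ell_G(g)\ge -\log F(e,g)$ for \emph{every} $g\in G$ in one stroke by iterating submultiplicativity of $F$ along the full normal-form factorisation, and then appeal to the Blach\`ere--Ha\"issinsky--Mathieu identity (Green speed equals entropy) to pass to the limit. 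Since $G(e,g)=F(e,g)\,G(e,e)$, the two entropy characterisations differ only by an additive constant, so the analytic inputs are equivalent; your route simply avoids the detour through the exit-time subsequence and the attendant bookkeeping with the boundary letters $h_i\in A\cup B$. Both arguments buy exactly the same conclusion, but yours is shorter and yields the stronger deterministic statement $\ell_G(g)\ge d_G(e,g)$ for all $g$, not just asymptotically along the trajectory.
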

\begin{proof}
By  \cite{benjamini-peres94}, the asymptotic entropy can be rewritten as 
\begin{equation}\label{equ:entropy}
h=\lim_{n\to\infty} -\frac{1}{n}\log G(e,X_n|1).
\end{equation}
For $m,n\in\N$, $m<n$, $x_1,\dots,x_m,x\in G_0$, we have 
$$
\P[X_n=x]\geq \P\bigl[\exists k_1<k_2<\ldots < k_m<n: X_{k_1}=x_1,\dots,X_{k_m}=x_m,X_n=x\bigr].
$$
By conditioning on the first visits to $x_1,\dots,x_m,x$ we obtain due to vertex transitivity:
\begin{eqnarray}
G(e,x)&\geq & F(e,x_1)\cdot F(x_1,x_2)\cdot\ldots \cdot F(x_m,x)\label{equ:bound-path} \\
&=&F(e,x_1)\cdot F(e,x_1^{-1}x_2)\cdot\ldots \cdot F(e,x_m^{-1}x).\nonumber
\end{eqnarray}
%
%
If $X_{\e{i-1}}=g_1t_1\dots g_{i-1}t_{i-1}h_{i-1}$ and $X_{\e{i}}=g_1t_1\dots g_it_ih_i$ are in normal form, then $X_{\e{i-1}}^{-1}X_{\e{i}}=h_{i-1}^{-1}g_it_ih_i=h_{i-1}^{-1}g_i\varphi^\delta(h_i)t_i$, where $\delta=1$, if $t_i=t^{-1}$, and $\delta=-1$, if $t_i=t$.
Therefore, setting $X_{\e{0}}:=e$, we may apply the inequality (\ref{equ:bound-path}) twice, which yields
$$
G(e,X_{\e{n}}) \geq  \prod_{i=1}^n F(e,X_{\e{i-1}}^{-1}X_{\e{i}}) \geq
\prod_{i=1}^n F(e,X_{\e{i-1}}^{-1}X_{\e{i}}t_i^{-1})F(e,t_i).
$$
We obtain the proposed upper bound for $h$ as follows:
\begin{eqnarray*}
h &=& \lim_{n\to\infty} -\frac{1}{\e{n}}\log G(e,X_{\e{n}}) \leq  \lim_{n\to\infty} -\frac{1}{\e{n}} \log \prod_{i=1}^n F(e,X_{\e{i-1}}^{-1}X_{\e{i}}) \\
&\leq & \lim_{n\to\infty} -\frac{1}{\e{n}} \sum_{i=1}^n  \log\bigr[  F(e,X_{\e{i-1}}X_{\e{i}}t_i^{-1})\cdot F(e,t_i)\bigr] \\
&=& \lim_{n\to\infty}\frac{1}{\e{n}}\sum_{i=1}^n \bigl(\ell_G(\underbrace{X_{\e{i-1}}X_{\e{i}}t_i^{-1}}_{\in G_0})+\ell_G(t_i)\bigr) = \lim_{n\to\infty} \frac{1}{\e{n}}\ell_G(X_{\e{n}})=\lambda_{\ell_G}.
\end{eqnarray*}
\end{proof}

\begin{remark}\label{rem:remarks2}\normalfont ~\\
At the end of this section let us discuss why it is considerably more difficult to study the rate of escape w.r.t. the natural graph metric and why the reasoning above can \textit{not} be applied straight-forwardly. It is unclear under which (natural) conditions the graph metric can be expressed by length functions. 
This is due to the fact that shortest paths in HNN extensions may follow a subtle behaviour, which seems to be quite cryptic how  to cut shortest paths into i.i.d. pieces, which stabilize as $n\to\infty$. In order to give an idea of the obstacles consider a group $G_0$ with finite isomorphic subgroups $A,B\subsetneq G_0$ such that $A\cap B\neq \{e\}$ and $\varphi(A\cap B)=A\cap B$. Take any $a\in A\cap B$, $a\neq e$, and suppose that $\mu_0(a)>0$. For $n\in\N$, a shortest path (i.e., a sequence of vertices $(v_0,v_1,\dots,v_m)\in G^{m+1}$ with $\mu(v_{i-1}^{-1}v_i)>0$ and $m$ minimal) from $e$ to $g:=t^n\varphi^n(a)$ is given by 
$$
\Pi_1=\bigl(e,a,t\varphi(a),t^2\varphi^2(a),\dots, t^n\varphi^n(a)\bigr);
$$ 
this path has length $n+1$. Note that $d\bigl(e,\varphi^n(a)\bigr)$ could be large.
Moreover, the unique shortest path from $e$ to $t^n$ is given by $\Pi_2=(e,t,t^2,\dots,t^n)$, a path of \mbox{length $n$.} Thus, if the random walk stands at time $k$ at $X_k=t^n\varphi^n(a)$ and at some time $l>k$ at $X_l=t^n$, then the path $\Pi_1$, which is a shortest path from $e$ to $X_k$, has to be changed at all points in order to transform it into the path $\Pi_2$, which is now a shortest path from $e$ to $X_l$. In other words, in this situation no initial part of a shortest path from $e$ to $X_n$, $n>k$, may have stabilized yet.
\par
Note also that a shortest path from $e$ to $a\in A$ could be $(e,t,t\varphi(a)=at,a)$, that is, shortest paths to elements in $G_0$ could make abbreviations through the ``exterior'' of $G_0$.
\par
It is unclear if and how paths can be chosen such that initial parts stabilize. Further deeper investigation is needed in order to understand the behaviour of shortest paths from $e$ to $X_n$ as $n\to\infty$, requiring a different approach which would go beyond the scope of this article.
\end{remark}

\section{Central Limit Theorem}
\label{sec:clt}
In this section we derive a central limit theorem for the word length w.r.t. the length function $\ell$. We still assume that $\ell$ has at most polynomial growth and satisfies \mbox{$\ell(g_0)\leq C\cdot |g_0|^\kappa$} for some $\kappa\in\N$ and all $g_0\in G_0$.
Before we are able to prove Theorem \ref{thm:clt} we have to introduce further notation. Observe that \mbox{$s_0:=(e_0te_0,1)\in\mathbb{D}$} is a state, which can be taken by the Markov chain $(\W_k,\i{k})_{k\in\N}$ with positive probability. Define $\tau_0:=\inf\{ m\in\N \mid (\W_m,\i{m})=s_0\}$ and inductively for $k\geq 1$
$$
\tau_k:=\inf\bigl\lbrace  m >\tau_{k-1} \,\bigl|\,  (\W_m,\i{m})=s_0\bigr\rbrace.
$$
Positive recurrence of $(\W_k,\i{k})_{k\in\N}$ yields $\tau_k<\infty$ almost surely for all $k\in\N$. Furthermore, we define for $i\in\N_0$:
\begin{eqnarray}\label{def:regeneration-times}
T_i := \mathbf{e}_{\tau_i}.
\end{eqnarray}
The following two lemmas contain the keys for later proofs.
\begin{lemma}\label{lem:tau-expmom}
The random variable $\tau_1-\tau_0$ has exponential moments, that is, there is a constant $c_\tau>0$ such that $\mathbb{E}\bigr[\exp\bigr(c_\tau (\tau_1-\tau_0)\bigr)\bigr]<\infty$.
\end{lemma}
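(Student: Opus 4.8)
The plan is to exploit the \emph{finite-memory} structure of the Markov chain $(\W_k,\i{k})_{k\in\N}$. By the remark following Proposition \ref{lem:exit-time-chain}, the one-step transition kernel out of a state $(w_1t_1h_1,m)\in\mathbb{D}$ depends on that state only through the coordinate $t_1h_1\in\mathcal{D}_0$, and $\mathcal{D}_0$ is a \emph{finite} set. Consequently, for every $y=(wth,m)\in\mathbb{D}$ and every $k\ge 1$, the probability $\P_y\bigl[(\W_k,\i{k})=s_0\bigr]$ depends on $y$ only through the coordinate $th\in\mathcal{D}_0$. This is precisely what allows one to bring the finiteness of $\mathcal{D}_0$ to bear, even though $\mathbb{D}$ itself is infinite.

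First I would establish a uniform minorization: there exist $N\in\N$ and $\delta\in(0,1)$ such that
$$
\sup_{y\in\mathbb{D}}\P_y\bigl[\forall k\in\{1,\dots,N\}:(\W_k,\i{k})\neq s_0\bigr]\le 1-\delta.
$$
Indeed, fix $\theta=th\in\mathcal{D}_0$. By irreducibility of $(\W_k,\i{k})_{k\in\N}$ (Proposition \ref{lem:exit-time-chain}) and since $s_0=(e_0te_0,1)\in\mathbb{D}$, starting from any state whose $\mathbf{h}$-coordinate equals $\theta$ there are $N_\theta\in\N$ and $\delta_\theta>0$ such that $s_0$ is hit within $N_\theta$ steps with probability at least $\delta_\theta$; by the previous paragraph these constants can be taken to depend on the starting state only through $\theta$. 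Since $\mathcal{D}_0$ is finite, setting $N:=\max_{\theta\in\mathcal{D}_0}N_\theta$ and $\delta:=\min_{\theta\in\mathcal{D}_0}\delta_\theta>0$ yields the displayed bound (this also covers $y=s_0$, whose $\mathbf{h}$-coordinate is $te_0$).

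Then I would iterate in the standard way. Let $\sigma:=\inf\{k\ge 1\mid(\W_k,\i{k})=s_0\}$ denote the first return time to $s_0$. Applying the Markov property at the times $N,2N,\dots$ together with the uniform bound gives $\P_y[\sigma>jN]\le(1-\delta)^j$ for every $y\in\mathbb{D}$ and $j\in\N_0$, so $\sigma$ has a geometric tail under $\P_{s_0}$ and therefore $\E_{s_0}\bigl[e^{c\sigma}\bigr]<\infty$ for every $c<\tfrac1N\log\tfrac1{1-\delta}$. Finally, $\tau_0$ is a stopping time for $(\W_k,\i{k})_{k\in\N}$ and is a.s.\ finite by positive recurrence of that chain, so the strong Markov property applied at $\tau_0$ shows that $\tau_1-\tau_0$ has the law of $\sigma$ under $\P_{s_0}$; hence $\E\bigl[\exp\bigl(c_\tau(\tau_1-\tau_0)\bigr)\bigr]<\infty$ with $c_\tau$ any admissible $c$.

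The only genuinely delicate point is the first paragraph: making precise that the hitting probabilities of $s_0$ depend on the initial state only through its $\mathbf{h}$-coordinate, so that the constants $N_\theta,\delta_\theta$ (and hence $N,\delta$) are well defined after taking a finite maximum/minimum over $\mathcal{D}_0$. Everything after that is the classical geometric-tail estimate for return times under a uniform Doeblin-type minorization, and in fact the same argument simultaneously yields finiteness of all polynomial moments of $\tau_1-\tau_0$.
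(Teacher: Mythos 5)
Your proof is correct and follows essentially the same route as the paper: both arguments rest on the observation that the transition kernel of $(\W_k,\i{k})_{k\in\N}$ depends on the current state only through its $\mathcal{D}_0$-coordinate, so that a hitting probability of $s_0$ within a bounded number of steps can be bounded below uniformly over the infinite state space by minimising over the finite set $\mathcal{D}_0$ (resp.\ $A\cup B$), which then yields a geometric tail for the return time. The only difference is that the paper is more constructive — it exhibits an explicit two-step path to $s_0$ (and treats the case $A=B=G_0$, $p\neq\tfrac12$ separately in the appendix) — whereas you obtain the bounded hitting time abstractly from irreducibility; both are valid.
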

\begin{proof}
We will just prove the lemma for the case $A,B\subsetneq G_0$; the remaining case of $A=B=G_0$ with $p\neq \frac12$ is outsourced to Lemma \ref{lem:tau-expmom-A=G} in the Appendix.
\par
For every state $(g_0t_0h_0,n_0)\in\mathbb{D}$ of $(\W_k,\i{k})_{k\in\N}$, the probability of reaching $(e_0te_0,1)$ in two steps is strictly positive:  assume $A,B\subsetneq G_0$ and let be $x\in X\setminus\{e_0\}$ and $n_{h_0}\in\N$ with $\mu_0^{(n_{h_0})}(h_0^{-1}x)>0$; then
\begin{eqnarray*}
q\bigl((g_0t_0h_0,n_0),(xte_0,n_{h_0}+1) \bigr)&\geq& \frac{\xi(te_0)}{\xi(t_0h_0)}\cdot \alpha^{n_{h_0}}\cdot \mu_0^{(n_{h_0})}(h_0^{-1}x)\cdot (1-\alpha) \cdot p>0,\\
q\bigl((xte_0,n_{h_0}+1),(e_0te_0,1)\bigr) &\geq  & \frac{\xi(te_0)}{\xi(te_0)}\cdot (1-\alpha)\cdot p>0,
\end{eqnarray*}
which provides
$$
q:=\min_{h_0\in A\cup B}q\bigl((g_0t_0h_0,n_0),(xte_0,n_{h_0}+1)\bigr)\cdot q\bigl((xte_0,n_{h_0}+1),(e_0te_0,1)\bigr)>0.
$$
This leads to the following exponential decaying upper bound:
$$
\mathbb{P}[\tau_1-\tau_0=n]\leq (1-q)^{\lfloor \frac{n}{2}\rfloor},
$$
that is, the random variable $\tau_1-\tau_0$ has exponential moments.
\end{proof}
Furthermore, we can also show:
\begin{lemma}\label{lem:T-expmom}
The random variables $T_0$ and $T_1-T_0$ have exponential moments, that is, there are  constants $c_0>0$ and $c_1>0$ such that $\mathbb{E}\bigr[\exp\bigr(c_0 T_0\bigr)\bigr]<\infty$ and $\mathbb{E}\bigr[\exp\bigr(c_1 (T_1-T_0)\bigr)\bigr]<\infty$.
\end{lemma}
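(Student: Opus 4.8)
The plan is to write $T_0$ and $T_1-T_0$ as random sums of the increments $\i{k}$ of the Markov chain $(\W_k,\i{k})_{k\in\N}$, and to deduce an exponential moment by combining a \emph{state-uniform} geometric tail for one increment with the exponential tails of the counters $\tau_0$ and $\tau_1-\tau_0$. Since $\e{k}=\sum_{l=1}^k\i{l}$ we have $T_0=\sum_{l=1}^{\tau_0}\i{l}$ and $T_1-T_0=\sum_{l=\tau_0+1}^{\tau_1}\i{l}$. The counter $\tau_0$ is the first hitting time of the state $s_0$, hence a stopping time for $\mathcal{F}_k:=\sigma\bigl((\W_l,\i{l}):l\le k\bigr)$, so the strong Markov property applied at $\tau_0$ shows that $T_1-T_0$ has the same law as $\sum_{l=1}^{\widehat\tau}\i{l}$ for the chain started from $s_0$, where $\widehat\tau$ is its first return time to $s_0$ (and $\widehat\tau\overset{d}{=}\tau_1-\tau_0$). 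It therefore suffices to estimate $\E\bigl[\exp\bigl(c\sum_{l=1}^{N}\i{l}\bigr)\bigr]$ for $N=\tau_0$ (chain from its natural initial law) and for $N=\widehat\tau$ (chain from $s_0$).

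The first, and central, step is the state-uniform geometric tail of a single increment. Summing the transition probabilities of Proposition \ref{lem:exit-time-chain} over the target state and using group invariance exactly as in the proof of Lemma \ref{lem:sum-finite}, one gets for every $(w_1t_1h_1,m)\in\mathbb{D}$ and every $n\in\N$
\[
\P\bigl[\i{k+1}=n\mid(\W_k,\i{k})=(w_1t_1h_1,m)\bigr]\ \le\ \frac{\max_{th\in\mathcal{D}_0}\xi(th)}{\min_{(w't'h',m')\in\mathbb{D}}\xi(t'h')}\cdot\P\bigl[X_{n-1}\in G_0\bigr];
\]
since $\xi$ is bounded away from $0$ on the states of $\mathbb{D}$ (Lemmas \ref{lem:xi>0}--\ref{lem:xi>0-2}) and $\P[X_{n-1}\in G_0]$ is the $(n-1)$st Taylor coefficient of $\mathcal{K}(z)$, which by Lemma \ref{lem:K-convergence} has radius of convergence $>1$, this yields $\P[\i{k+1}=n\mid\mathcal{F}_k]\le C_0\varrho^{-n}$ a.s.\ with constants $C_0<\infty$, $\varrho>1$ independent of $k$ and of the current state; the same bound applied to $\P[\i{1}=n]=\P[\e{1}=n]\le\sum_{m\ge n-1}\P[X_m\in G_0]$ shows $\i{1}$ has a geometric tail too. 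Using $e^x-1\le xe^x$, the number $\phi(c):=1+cC_0\sum_{n\ge1}n(e^c/\varrho)^n$ is then finite for $0<c<\log\varrho$, dominates $\E[e^{c\i{k+1}}\mid\mathcal{F}_k]$ a.s.\ for every $k$ as well as $\E_{s_0}[e^{c\i{1}}]$, and satisfies $\phi(c)\to1$ as $c\downarrow0$; also $\psi(c):=\E[e^{c\i{1}}]<\infty$ for such $c$. Finally, $\tau_1-\tau_0$ has an exponential moment by Lemma \ref{lem:tau-expmom} (together with Lemma \ref{lem:tau-expmom-A=G} for the case $A=B=G_0$), and running the same two-step argument from an arbitrary initial state gives $\P[\tau_0\ge k]\le C_1\theta^k$ for some $C_1<\infty$ and $\theta\in(0,1)$.

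With these ingredients I would proceed as follows. Iterated conditioning on $\mathcal{F}_{k-1}$ and the Markov property give $\E\bigl[e^{2c\sum_{l=1}^k\i{l}}\bigr]\le\psi(2c)\,\phi(2c)^{k-1}$ and, for the chain started at $s_0$, $\E_{s_0}\bigl[e^{2c\sum_{l=1}^k\i{l}}\bigr]\le\phi(2c)^{k}$. Decomposing over the value of $\tau_0$, using $\{\tau_0=k\}\subseteq\{\tau_0\ge k\}$ and Cauchy--Schwarz,
\[
\E\bigl[e^{cT_0}\bigr]=\sum_{k\ge1}\E\bigl[e^{c\sum_{l=1}^k\i{l}}\mathds{1}_{\{\tau_0=k\}}\bigr]\le\sum_{k\ge1}\E\bigl[e^{2c\sum_{l=1}^k\i{l}}\bigr]^{1/2}\P[\tau_0\ge k]^{1/2}\le\sqrt{\psi(2c)C_1}\,\theta^{1/2}\sum_{k\ge1}\bigl(\phi(2c)^{1/2}\theta^{1/2}\bigr)^{k-1}.
\]
Because $\phi(2c)\to1$ as $c\downarrow0$ while $\theta<1$ is fixed, the ratio $\phi(2c)^{1/2}\theta^{1/2}$ is $<1$ for some $c=c_0>0$, so $\E[e^{c_0T_0}]<\infty$. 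The identical computation for the chain started at $s_0$ with $\widehat\tau$ in place of $\tau_0$ (where the base case of the iteration is now bounded by $\phi$ rather than $\psi$), combined with the strong Markov reduction from the first paragraph, gives $\E[e^{c_1(T_1-T_0)}]<\infty$ for some $c_1>0$.

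The main obstacle is the state-uniform geometric tail for the increments in the second paragraph: this is what allows the random sum $\sum_{l=1}^{\tau_0}\i{l}$ to inherit an exponential moment from $\tau_0$, and it rests on Lemma \ref{lem:K-convergence} (radius of convergence of $\mathcal{K}$ exceeding $1$) together with the elementary but indispensable fact that $\phi(0+)=1$, which is what makes the final geometric series converge for small $c$. The remaining points --- controlling the first increment $\i{1}$ (whose conditional law is not of the generic transition form) and invoking the strong Markov property at $\tau_0$ --- require only routine care.
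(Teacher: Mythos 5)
Your proof is correct, but it takes a genuinely different and considerably heavier route than the paper's. The paper never passes through the Markov chain $(\W_k,\i{k})_{k\in\N}$ for this lemma: it argues directly on the trajectory of $(X_n)_{n\in\N_0}$ that at any time in $[T_0,T_1)$ there is a prescribed excursion of bounded length $N$ (move inside the current $G_0$-coset to a fixed $x\in X\setminus\{e_0\}$, then append $t$ twice) which, followed by never undoing the new prefix (an event of probability at least $\xi(te_0)>0$), forces the next regeneration; the resulting uniform lower bound $q_T>0$ gives $\mathbb{P}[T_1-T_0=n]\le(1-q_T)^{\lfloor n/N\rfloor}$ outright, and the same trial argument handles $T_0$. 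You instead write $T_1-T_0=\sum_{l=\tau_0+1}^{\tau_1}\i{l}$ and transfer exponential moments from the counter $\tau_1-\tau_0$ (Lemma \ref{lem:tau-expmom}) to the random sum, which requires the state-uniform geometric tail for a single increment --- correctly extracted from Proposition \ref{lem:exit-time-chain} and Lemma \ref{lem:K-convergence} by the same manipulation as in Lemma \ref{lem:sum-finite} --- plus the Cauchy--Schwarz/small-$c$ bookkeeping with $\phi(0+)=1$. Your version is more modular and would survive in situations where no bounded-length ``forcing word'' exists, since it isolates the two inputs (exponential tail of the number of excursions, uniform exponential tail of each excursion's duration); its cost is the analytic input of Lemma \ref{lem:K-convergence} and the extra estimates, none of which the paper's short geometric-trials bound needs. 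Both arguments are valid.
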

\begin{proof}
Once again we only consider the case $A,B\neq G_0$; the remaining case \mbox{$A=B=G_0$} with $p\neq\frac12$ works similarly, see Lemma \ref{lem:tau-expmom-A=G}.
\par
Let be $x\in X\setminus\{e_0\}$.
Similarly as in the proof of Lemma \ref{lem:tau-expmom}, at any time \mbox{$n\in [T_0,T_{1})$} the random walk $(X_n)_{n\in\N_0}$ can realize the  time $T_{1}$ within the next \mbox{$N:=\max\{n_h\mid h\in A\cup B\}+2$} steps, where \mbox{$n_h:=\min\{ m\in\N\mid \mu_0^{(m)}(h^{-1}x)>0\}$:} if $X_n=g_1t_1\dots g_jt_jg_{j+1}$ (in the form of (\ref{equ:normalform})) then one can walk inside the set of words having prefix $[X_n]$ via $g_1t_1\dots g_jt_jx$ and $g_1t_1\dots g_jt_jxt$ to $g_1t_1\dots g_jt_jxtt$, where $T_{1}$ can be generated.
Hence,  there is some $q_T\in (0,1)$ such that 
$$
\mathbb{P}[T_1-T_0=n]\leq (1-q_T)^{\lfloor \frac{n}{N}\rfloor},
$$
which yields existence of exponential moments of $T_1-T_0$. The same reasoning shows existence of exponential moments of $T_0$.
\end{proof}

Assume now that $(X_n)_{n\in\N_0}$ tends to some $g_1t_1g_2t_2\ldots\in\mathcal{B}$ in the sense of Proposition \ref{lem:infinite-words}.  For $i\in\mathbb{N}$, we define:
\begin{eqnarray}
\widetilde{L}_i&:=& \sum_{j=\tau_{i-1}+1}^{\tau_i} \ell(g_jt_j)=\ell([X_{T_i}])-\ell([X_{T_{i-1}}])\nonumber\\
\textrm{and }\quad L_i&:=&\widetilde{L}_i-(T_i-T_{i-1})\cdot\lambda_\ell.\label{def:Li}
\end{eqnarray}

\begin{lemma}\label{lem:L-variance}
$$
\sigma_L^2:=\mathrm{Var}(L_1)\in(0,\infty).
$$
\end{lemma}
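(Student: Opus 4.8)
The statement combines a finiteness assertion, $\sigma_L^2<\infty$, and a non-degeneracy assertion, $\sigma_L^2>0$; the plan is to treat them separately. Finiteness will follow by dominating $|L_1|$ by a fixed power of the increment $T_1-T_0$ and invoking the exponential moments from Lemma~\ref{lem:T-expmom}. Non-degeneracy will follow by exhibiting two excursions of the regeneration structure, each of positive probability, along which $L_1$ takes two different values.

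For finiteness, write $X_\infty=g_1t_1g_2t_2\cdots\in\mathcal{B}$ for the limit word (Proposition~\ref{lem:infinite-words}), so that $\widetilde{L}_1=\sum_{j=\tau_0+1}^{\tau_1}\bigl(\ell(g_j)+\ell(t_j)\bigr)$ and $|L_1|\le\widetilde{L}_1+\lambda_\ell(T_1-T_0)$. First I would establish, arguing exactly as in the proofs of Lemma~\ref{lem:sum-finite} and Theorem~\ref{thm:drift} (tracking how the normal form of $X_n$ evolves while the $j$-th coset letter stabilises, and using that the $G_0$-coordinate of $X_{\e{j-1}}$ lies in the finite set $A\cup B$), that there is a constant $c$ with $|g_j|\le c\,\i{j}$, whence $\ell(g_j)\le Cc^\kappa\,\i{j}^\kappa$ by the polynomial growth of $\ell$. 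Since $\i{j}\ge 1$ and $\kappa\ge 1$ we have $\sum_{j=\tau_0+1}^{\tau_1}\i{j}^\kappa\le\bigl(\sum_{j=\tau_0+1}^{\tau_1}\i{j}\bigr)^\kappa=(T_1-T_0)^\kappa$; combining this with $\ell(t_j)\le M:=\max\{\ell(t),\ell(t^{-1})\}$ and $\tau_1-\tau_0\le T_1-T_0$ gives $\widetilde{L}_1\le C'(T_1-T_0)^\kappa$, and hence $|L_1|\le C''(T_1-T_0)^\kappa$ for suitable constants (using $T_1-T_0\ge 1$). By Lemma~\ref{lem:T-expmom}, $T_1-T_0$ has exponential moments, hence all polynomial moments, so $\mathbb{E}[L_1^2]\le(C'')^2\,\mathbb{E}\bigl[(T_1-T_0)^{2\kappa}\bigr]<\infty$.

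For non-degeneracy I may assume $\ell\not\equiv 0$, so that $\lambda_\ell>0$ by Theorem~\ref{thm:drift}. Since $\tau_0$ is a hitting time of the positive recurrent chain $(\W_k,\i{k})_{k\in\N}$, it is a.s.\ finite and the chain after $\tau_0$ is a fresh copy started at $s_0=(e_0te_0,1)$ (with the obvious symmetric modification when $A=B=G_0$ and $p<\frac{1}{2}$). I would pick $n^*\ge 1$ with $\mu_0^{(n^*)}(e_0)>0$ and set $m_1:=n^*+1$ and $m_2:=2n^*+1$, so that $m_1\ne m_2$ and $m_i\ne 1$. A short computation with the transition formula of Proposition~\ref{lem:exit-time-chain} then shows $q\bigl(s_0,(e_0te_0,m_i)\bigr)>0$: the contributing paths stay at $t$-length $1$ by performing $\mu_0$-loops from $te_0$ back to $te_0$ of total length $m_i-1$ and then a single $t$-step to $t^2$, while $\xi(te_0)>0$ by Lemmas~\ref{lem:xi>0} and~\ref{lem:xi>0-2}; similarly $q\bigl((e_0te_0,m_i),s_0\bigr)>0$. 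Hence the two-step excursion given by $s_0\to(e_0te_0,m_i)\to s_0$ is a genuine first return to $s_0$ of positive probability, and along it one has $\tau_1-\tau_0=2$, $\widetilde{L}_1=\ell(e_0t)+\ell(e_0t)=2\bigl(\ell(e_0)+\ell(t)\bigr)$ and $T_1-T_0=m_i+1$, so $L_1=2\bigl(\ell(e_0)+\ell(t)\bigr)-(m_i+1)\lambda_\ell$. Because $\lambda_\ell>0$ and $m_1\ne m_2$, these two values of $L_1$ differ, so $L_1$ is not almost surely constant and $\mathrm{Var}(L_1)>0$.

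The one genuinely delicate point will be the linear bound $|g_j|\le c\,\i{j}$ used in the finiteness step. It amounts to checking that, while the walk stabilises the $j$-th coset letter, the $G_0$-coordinate of the current word cannot grow faster than linearly in the number of steps $\i{j}$; this rests on the $G_0$-coordinate at the previous exit time lying in the bounded set $A\cup B$ and on the observation that every excursion of $(X_n)_{n\in\N_0}$ to larger $t$-length and back enlarges that coordinate by at most $\max_{h\in A\cup B}|h|$ while costing at least two steps. Everything else is a routine combination of the ergodic-theoretic regeneration structure with the exponential moment bound of Lemma~\ref{lem:T-expmom}.
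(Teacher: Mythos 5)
Your proof is correct and follows essentially the same route as the paper: finiteness via the polynomial-growth bound $\widetilde{L}_1\leq C'(T_1-T_0)^{\kappa}$ combined with the exponential moments of Lemma \ref{lem:T-expmom}, and positivity by noting that extra $\mu_0$-loops inside a copy of $G_0$ change $T_1-T_0$ without changing $\widetilde{L}_1$. You merely make explicit two points the paper leaves implicit, namely the bound $|g_j|\leq c\,\mathbf{i}_j$ (justified via $X_{\e{j-1}}^{-1}X_{\e{j}}=h_{j-1}^{-1}g_jt_jh_j$ with $h_{j-1},h_j$ in the finite set $A\cup B$) and the concrete pair of positive-probability excursions $s_0\to(e_0te_0,m_i)\to s_0$ witnessing non-degeneracy.
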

\begin{proof}

Since
\begin{eqnarray*}
\widetilde{L}_1&=& \sum_{j=\tau_{0}+1}^{\tau_1} \ell(g_jt_j) \leq C\cdot \sum_{j=\tau_0+1}^{\tau_1}\mathbf{i}_{j}^\kappa+ \max\{\ell(t),\ell(t^{-1})\}\cdot (\tau_1-\tau_0)\\
&\leq& C\cdot (T_1-T_0)^\kappa +  \max\{\ell(t),\ell(t^{-1})\}\cdot (\tau_1-\tau_0),
\end{eqnarray*}
finiteness of $\sigma_L^2$ follows from Lemmas \ref{lem:tau-expmom} and \ref{lem:T-expmom}. Since $\mu_0$ generates $G_0$ as a semigroup, the random walk can perform arbitrarily many circles in a copy of $G_0$ (in the underlying Cayley graph) in the time interval $[T_0,T_{1}]$; therefore, $L_1$ is \textit{not} constant, and consequently we obtain \mbox{$\sigma_L^2>0$.}
\end{proof}
Completely analogously to Theorem \ref{thm:alternative-drift-formula} one can prove that
\begin{equation}\label{equ:lambda-T-formula}
\lambda_{\ell}=\frac{\mathbb{E}\bigl[\ell([X_{T_1}])-\ell([X_{T_0}])\bigr]}{\mathbb{E}[T_1-T_0]}=\frac{\mathbb{E}[\widetilde{L}_1]}{\mathbb{E}[T_1-T_0]}.
\end{equation}
Observe that we may take expectations w.r.t. the underlying probability measure induced from $\mu$ (that is, w.r.t. the initial distribution $\mathbb{P}[\mathbf{W}_1=\cdot,\mathbf{i}_1=\cdot]$), and \textit{not} w.r.t. the invariant probability measure $\pi$ as initial distribution; this is possible since the random times $T_i$ are regeneration times.
\begin{corollary}\label{cor:VarL1-formula}
$$
\mathbb{E}[L_1]=0 \ \textrm{ and } \ \sigma_L^2= \mathbb{E}\bigl[\bigl(\ell([X_{T_1}])-\ell([X_{T_0}])-(T_1-T_0)\lambda_{\ell}\bigr)^2\bigr].
$$
\end{corollary}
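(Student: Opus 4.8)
The plan is to read off both identities directly from the formula (\ref{equ:lambda-T-formula}) for $\lambda_\ell$ combined with the definition (\ref{def:Li}) of $L_1$, so that essentially no new work is required beyond checking integrability.

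First I would record the facts that make all the expectations below meaningful. By Lemma \ref{lem:T-expmom} the increment $T_1-T_0$ has exponential moments, in particular finite mean; and the chain of estimates in the proof of Lemma \ref{lem:L-variance} shows that $\widetilde{L}_1=\ell([X_{T_1}])-\ell([X_{T_0}])$ is dominated by $C\,(T_1-T_0)^\kappa+\max\{\ell(t),\ell(t^{-1})\}\,(\tau_1-\tau_0)$, which is integrable by Lemmas \ref{lem:tau-expmom} and \ref{lem:T-expmom}. Hence $\mathbb{E}[\widetilde{L}_1]$, $\mathbb{E}[T_1-T_0]$ and $\mathbb{E}[L_1]$ are finite, and $\sigma_L^2=\mathrm{Var}(L_1)=\mathbb{E}[L_1^2]-(\mathbb{E}[L_1])^2<\infty$ by Lemma \ref{lem:L-variance}.

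For the first identity I would take expectations in $L_1=\widetilde{L}_1-(T_1-T_0)\lambda_\ell$ and substitute $\lambda_\ell$ from (\ref{equ:lambda-T-formula}):
$$
\mathbb{E}[L_1]=\mathbb{E}[\widetilde{L}_1]-\lambda_\ell\,\mathbb{E}[T_1-T_0]=\mathbb{E}[\widetilde{L}_1]-\frac{\mathbb{E}[\widetilde{L}_1]}{\mathbb{E}[T_1-T_0]}\,\mathbb{E}[T_1-T_0]=0.
$$
Then, since $\mathbb{E}[L_1]=0$, we have $\sigma_L^2=\mathbb{E}[L_1^2]$, and expanding the square in the definition of $L_1$ yields
$$
\sigma_L^2=\mathbb{E}\bigl[\bigl(\widetilde{L}_1-(T_1-T_0)\lambda_\ell\bigr)^2\bigr]=\mathbb{E}\bigl[\bigl(\ell([X_{T_1}])-\ell([X_{T_0}])-(T_1-T_0)\lambda_\ell\bigr)^2\bigr],
$$
which is the assertion.

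I do not expect any real obstacle here: the content is entirely contained in (\ref{equ:lambda-T-formula}) and the exponential-moment bounds already established, and the remainder is bookkeeping. The one point worth making explicit is that (\ref{equ:lambda-T-formula}), and hence every $\mathbb{E}[\cdot]$ appearing above, refers to the law of $(X_n)_{n\in\N_0}$ started from $\mathbf{W}_1$ distributed according to the step law induced by $\mu$ rather than from the invariant measure $\pi$; this substitution is legitimate precisely because the $T_i$ are regeneration times, as noted immediately after (\ref{equ:lambda-T-formula}).
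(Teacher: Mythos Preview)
Your proof is correct and follows exactly the paper's approach: deduce $\mathbb{E}[L_1]=0$ from (\ref{equ:lambda-T-formula}) by linearity, then use $\sigma_L^2=\mathrm{Var}(L_1)=\mathbb{E}[L_1^2]$ and the definition of $L_1$. The paper's own proof is a single sentence to the same effect; your version simply makes the integrability justifications explicit.
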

\begin{proof}
We obtain $\mathbb{E}[L_1]=0$ immediately from (\ref{equ:lambda-T-formula}), and therefore the proposed formula for $\sigma_L^2$.
%
\end{proof}

Now we can prove the proposed central limit theorem, where we use a similar reasoning as in \cite[Thm. 1.1]{haissinsky-mathieu-mueller12}:


\begin{proof}[Proof of Theorem \ref{thm:clt}]
First, observe that Corollary \ref{cor:VarL1-formula} together with Lemma \ref{lem:L-variance} ensures that $\sigma^2$ as defined in Theorem \ref{thm:clt} is strictly positive.
\par
For $k\in\mathbb{N}$,  set
$$
R_k:=\sum_{i=1}^k \widetilde{L}_i, \quad 
S_k:=\sum_{i=1}^k L_i  = R_k-(T_k-T_0)\cdot\lambda_{\ell},
$$
and, for $n\in\N$, set 
$$
\mathbf{t}(n):=\sup\{m\in\N_0 \mid T_m\leq n\}.
$$ 
We note that $\mathbf{t}(n)\to\infty$ almost surely as $n\to\infty$.
Observe that Proposition \ref{lem:exit-time-chain} immediately implies that $(\tau_i-\tau_{i-1})_{i\in\N}$ and $(T_i-T_{i-1})_{i\in\N}$ are  i.i.d. sequences.
The sequence $(L_i)_{i\in\N}$ is also an i.i.d. sequence of random variables; for a proof, we refer to \mbox{Lemma \ref{lem:Li-iid}} in the Appendix. 
Then, by  \cite[Theorem 14.4]{billingsley:99},
$$
\frac{S_{\mathbf{t}(n)}}{\sigma_L \sqrt{\mathbf{t}(n)}} \xrightarrow{\mathcal{D}} N(0,1).
$$
Analogously as in the proof of Theorem \ref{thm:alternative-drift-formula}, one can show that
$$
\frac{n}{\mathbf{t}(n)}\xrightarrow{n\to\infty} \mathbb{E}[T_1-T_0]\quad \textrm{almost surely.}
$$
Applying the Lemma of Slutsky gives for $n$ large enough:
\begin{equation}\label{equ:clt}
\frac{S_{\mathbf{t}(n)}}{\sigma_L \sqrt{n}} = \frac{S_{\mathbf{t}(n)}}{\sigma_L \sqrt{\mathbf{t}(n)}}\frac{\sqrt{\mathbf{t}(n)}}{\sqrt{n}}\xrightarrow{\mathcal{D}} N\biggl(0,\frac{1}{\mathbb{E}[T_1-T_0]}\biggr).
\end{equation}
The next step is to prove the following convergence behaviour:
\begin{equation}\label{equ:overhead}
\frac{\ell(X_n)- R_{\mathbf{t}(n)}}{ \sqrt{n}}\xrightarrow{\P} 0.
\end{equation}
Assume that $\mathbf{t}(n)\geq 1$ and that $X_n$ has the form 
$$
g_1t_1\ldots g_{\tau_{\mathbf{t}(n)}}t_{\tau_{\mathbf{t}(n)}}g_{\tau_{\mathbf{t}(n)}+1}t_{\tau_{\mathbf{t}(n)}+1} \ldots g_{m}t_{m}g_{m+1},
$$ 
where $m=|X_n|_t$. Recall that $R_{\mathbf{t}(n)}$ does not contain the weights of the letters of $X_{T_0}$.
Polynomial growth of $\ell$ yields the following upper bound:
\begin{eqnarray*}
&&\ell(X_n)-R_{\mathbf{t}(n)} \\
&= &  \sum_{j=\tau_{\mathbf{t}(n)}+1}^{m} \bigl(\ell(g_j)+\ell(t_j)\bigr) + \ell(g_{m+1}) + \ell(X_{T_0}) \\
&\leq &  \sum_{j=\tau_{\mathbf{t}(n)}+1}^{m} \bigl(C\cdot |g_j|^{\kappa}+\ell(t_j)\bigr) +C\cdot |g_{m+1}|^\kappa +  \sum_{k=1}^{\tau_0} \bigl(C\cdot |g_k|^{\kappa}+\ell(t_k)\bigr) \\
&\leq & C\cdot (T_{\mathbf{t}(n)+1}-T_{\mathbf{t}(n)})^\kappa + C\cdot T_0^\kappa
+ \max_{s\in\{t,t^{-1}\}} \ell(s)\cdot \bigl( (T_{\mathbf{t}(n)+1}-T_{\mathbf{t}(n)})+T_0\bigr)\\
&\leq & C' \cdot (T_{\mathbf{t}(n)+1}-T_{\mathbf{t}(n)})^\kappa + C' \cdot T_0^\kappa
\end{eqnarray*}
where $C':=2\cdot \bigl(C+  \max_{s\in\{t,t^{-1}\}}\ell(s)\bigr)$. Since \mbox{$(T_{i}-T_{i-1})_{i\in\N}$} is an i.i.d. sequence, we obtain for any $\varepsilon >0$ and $n$ large enough:
\begin{eqnarray*}
&&\P\bigl[\ell(X_n)- R_{\mathbf{t}(n)}> \varepsilon \sqrt{n} , \mathbf{t}(n)\geq 1\bigr] \\
&\leq & \P\bigl[C' \cdot (T_{\mathbf{t}(n)+1}-T_{\mathbf{t}(n)})^\kappa + C' \cdot T_0^\kappa> \varepsilon \sqrt{n}, \mathbf{t}(n)\geq 1 \bigr] \\
&\leq & \P\Bigl[\exists k\in\{1,\dots,n\}: (T_{k+1}- T_{k})^\kappa \geq  \frac{\varepsilon}{2C'} \sqrt{n}\Bigr] + \P\Bigl[  T_0^\kappa\geq   \frac{\varepsilon}{2C'} \sqrt{n}\Bigr] \\
&\leq & n\cdot \P\Bigl[  (T_{1}-T_{0})^{\kappa} \geq  \frac{\varepsilon}{2C'} \sqrt{n}\Bigr] + \P\Bigl[T_0^{\kappa}\geq   \frac{\varepsilon}{2C'} \sqrt{n}\Bigr] \\
&\leq &  n\cdot \P\Bigl[(T_{1}-T_{0})^{3\kappa} \geq  \Bigl(\frac{\varepsilon}{2C'} \sqrt{n}\Bigr)^3\Bigr] + \P\Bigl[ T_0^{\kappa}\geq   \frac{\varepsilon}{2} \sqrt{n}\Bigr] \\
&\leq & n\cdot \frac{\mathbb{E}\bigl[ (T_{1}-T_{0})^{3\kappa} \bigr]}{\bigl(\frac{\varepsilon}{2C'} \sqrt{n}\bigr)^3} + \frac{\mathbb{E}\bigl[T_0^{\kappa} \bigr]}{\frac{\varepsilon}{2} \sqrt{n}} \xrightarrow{n\to\infty} 0.
\end{eqnarray*}

In the last inequality we applied Markov's Inequality and used Lemma \ref{lem:T-expmom} afterwards.
The proposed convergence behaviour in (\ref{equ:overhead}) follows now from the fact that $\mathbf{t}(n)\to\infty$  almost surely as $n\to\infty$.
\par
By construction of the random times $T_i$, $i\in\N_0$, we have
$$
\mathbf{S}_{\mathbf{t}(n)}=\bigl(\ell([X_{\mathbf{t}(n)}])-\ell([X_{T_0}])\bigr)-(T_{\mathbf{t}(n)}-T_0)\cdot \lambda_{\ell}.
$$
For $\varepsilon >0$, we get:
\begin{eqnarray}
&&\P\Bigl[ \bigl| \mathbf{S}_{\mathbf{t}(n)} -\bigl(\ell(X_n)-n\cdot\lambda_{\ell}\bigr)\bigr| >\varepsilon \sqrt{n}\Bigr]\quad \label{equ:inequ1}\\
&\leq & \P\Bigl[ \ell(X_n)-\ell([X_{\mathbf{t}(n)}])+\ell([X_{T_0}]) \geq  \frac{\varepsilon}{2} \sqrt{n}\Bigr] +
\P\Bigl[ \lambda_{\ell}  \bigl(n - (T_{\mathbf{t}(n)}-T_0)\bigr) \geq  \frac{\varepsilon}{2} \sqrt{n}\Bigr].\nonumber
\end{eqnarray}
From (\ref{equ:overhead}) follows that
$$
\frac{\ell(X_n)-\ell([X_{\mathbf{t}(n)}])+\ell([X_{T_0}])}{\sqrt{n}} 
=\frac{\ell(X_n)-R_{\mathbf{t}(n)}}{\sqrt{n}}\xrightarrow{\P} 0,
$$
hence $\P\Bigl[ \ell(X_n)-\ell([X_{\mathbf{t}(n)}])+\ell([X_{T_0}]) \geq  \frac{\varepsilon}{2} \sqrt{n}\Bigr]\to 0$ as $n\to\infty$.
For the second summand in (\ref{equ:inequ1}), we obtain once again from $T_i-T_{i-1}\sim T_1-T_0$ for all $i\in\N$:
\begin{eqnarray*}
&& \P\Bigl[ \lambda_{\ell} \cdot \bigl(n - (T_{\mathbf{t}(n)}-T_0)\bigr) \geq  \frac{\varepsilon}{2} \sqrt{n},\mathbf{t}(n)\geq 1\Bigr] \\
&\leq & \P\Bigl[ \lambda_{\ell} \cdot \bigl(T_{\mathbf{t}(n)+1} - (T_{\mathbf{t}(n)}-T_0)\bigr) \geq  \frac{\varepsilon}{2} \sqrt{n},\mathbf{t}(n)\geq 1\Bigr]\\
&\leq & \P\Bigl[ \exists k\in\{1,\dots,n\}: T_{k+1}-T_k \geq  \frac{\varepsilon}{4\lambda_{\ell}} \sqrt{n}\Bigr] + \P\Bigl[T_0 \geq  \frac{\varepsilon}{4\lambda_{\ell}} \sqrt{n}\Bigr] \\
&\leq & n\cdot \P\Bigl[T_{1}-T_0 \geq  \frac{\varepsilon}{4\lambda_{\ell}} \sqrt{n}\Bigr] +  \P\Bigl[T_0 \geq  \frac{\varepsilon}{4\lambda_{\ell}} \sqrt{n}\Bigr] \\ 
&= & n\cdot \P\Bigl[(T_{1}-T_0)^4 \geq  \frac{\varepsilon^4}{(4\lambda_{\ell})^4} n^2\Bigr] + \P\Bigl[T_0 \geq  \frac{\varepsilon}{4\lambda_{\ell}} \sqrt{n}\Bigr] \\ 
&\leq &  n\cdot  (4\lambda_{\ell})^4\cdot \frac{\mathbb{E}\bigl[(T_1-T_0)^4\bigr]}{\varepsilon^4 n^2}+
 4\cdot \lambda_{\ell}\cdot  \frac{\mathbb{E}\bigl[T_0\bigr]}{\varepsilon \sqrt{n}} \xrightarrow{n\to\infty} 0.
\end{eqnarray*}
We applied Markov's Inequality in the last line together with Lemma \ref{lem:T-expmom}. As $\mathbf{t}(n)\to\infty$ almost surely, we obtain
$$
\P\bigl[ \bigl| S_{\mathbf{t}(n)} -\bigl(\ell(X_n)-n\cdot\lambda_{\ell}\bigr)\bigr| >\varepsilon \sqrt{n}\bigr] \xrightarrow{\P} 0.
$$
Another application of the Lemma of Slutsky together with (\ref{equ:clt})  proves the claim.
\end{proof}


\section{Analyticity of $\lambda_\ell$}
\label{sec:analyticity}

In this section we show that $\lambda_\ell$ varies real-analytically in terms of probability measures of constant support. To this end, we show that both nominator and denominator in the formula for $\lambda_\ell$ given in  (\ref{equ:lambda-T-formula}) vary real-analytically in the parameters describing the random walk on $G$.
\par
First, we describe the problem more formally. Let $S_0=\{s_1,\dots,s_{d}\}$ generate $G_0$ as a semigroup and denote by 
$$
\mathcal{P}_0(S_0)=\Bigl\lbrace (p_1,\dots,p_d) \,\Bigl|\, \forall i\in\{1,\dots,d\}: p_i>0, \sum_{j=1}^d p_j=1\Bigr\rbrace
$$ 
the set of all strictly positive probability measures $\mu_0$ on $S_0$ with 
$$
\bigl(\mu_0(s_1),\dots,\mu_0(s_d)\bigr):=(p_1,\dots,p_d)\in\mathcal{P}(S_0).
$$
Consider the parameter vector 
$$
\underline{p}:=\bigl(p_1,\dots, p_d,\alpha,\beta,p,q)\in \mathcal{P}_0(S)\times (0,1)^4.
$$
The set of valid parameter vectors, whose single entries describe uniquely the random walk probability measure $\mu$ on $G$ is given by
$$
\mathcal{P}:=\mathcal{P}_0(S)\times \bigl\lbrace(\alpha,\beta)\in (0,1)^2\mid \beta=1-\alpha\bigr\rbrace \times
\bigl\lbrace (p,q)\in (0,1)^2\mid q=1-p\bigr\rbrace,
$$
if $A,B\neq G_0$. In the case $A=B=G_0$ we have to exclude the case $p\neq \frac12$ and set
$$
\mathcal{P}:=\mathcal{P}_0(S)\times \bigl\lbrace(\alpha,\beta)\in (0,1)^2\mid \beta=1-\alpha\bigr\rbrace \times
\bigl\lbrace (p,q)\in (0,1)^2\mid q=1-p,p\neq1/2\bigr\rbrace.
$$
Our aim is to show that the mapping
$$
(\mu_0,\alpha,p)\mapsto \lambda_\ell=\lambda_\ell(\mu_0,\alpha,p)
$$
varies real analytically in $(\mu_0,\alpha,1-\alpha,p,1-p)\in\mathcal{P}$, that is, $\lambda_\ell(\mu_0,\alpha,p)$ can be expanded as a multivariate power series in the variables of $\underline{p}$ (with $\beta=1-\alpha$ and $q=1-p$) in a neighbourhood of any $\underline{p}_0\in\mathcal{P}$.

\begin{remark}\normalfont \label{rem:counterexample-analyticity}
At this point let me remark that analyticity of the rate of escape is not obvious: e.g., consider a nearest neighbour random walk $(Z_n)_{n\in\N_0}$ on $\mathbb{Z}$ with transition probabilities 
$$
\mathbb{P}[Z_{n+1}=z+1|Z_n=z]=p_1,\ \mathbb{P}[Z_{n+1}=z-1|Z_n=z]=1-p_1
$$ 
for all $z\in\mathbb{Z},n\in\mathbb{N}$. Then the mapping $(0,1)\ni p_1\mapsto \lambda=|2p_1-1|$ is not analytic. Another counterexample is given in  \cite{mairesse-matheus:2007}.
\end{remark}
We have to give some preliminary remarks, before we present a proof for our analyticity result. Let $A_n$, $n\in\N_0$, be a event which can be described by paths of length $n$ of the Markov chain $(X_n)_{n\in\N_0}$ on $G$; e.g., $A_n=[X_n\in G_0]$. By decomposing each such path belonging to $A_n$ w.r.t. the number of steps which are performed w.r.t. the $d+2$ parameters $\mu(s_i),\mu(t^{\pm1 })$, we can rewrite $\P[A_n]$ as 
\begin{equation}
\sum_{\substack{n_1,\dots,n_{d+2}\geq 0:\\ n_1+\dots+n_{d+2}= n}} c(n_1,\dots,n_{d+2})p_1^{n_1}\cdot
\ldots \cdot p_d^{n_d}\cdot \alpha^{n_1+\dots + n_d}\cdot \beta^{n_{d+1}+n_{d+2}}\cdot p^{n_{d+1}}\cdot q^{n_{d+2}}, \label{equ:path-prob}
\end{equation}
where $c(n_1,\dots,n_{d+2})\in [0,\infty)$.
If the generating function $\mathcal{F}(z):= \sum_{n\geq 0}\P[A_n]\,z^n$, $z\in\mathbb{C}$, has radius of convergence strictly bigger than $1$, then, for $\delta>0$ small enough,
\begin{eqnarray}
\infty >\mathcal{F}(1+\delta)&=&\sum_{n\geq 0}\sum_{\substack{n_1,\dots,n_{d+2}\geq 0:\\ n_1+\dots+n_{d+2}=n}} c(n_1,\dots,n_{d+2}) \prod_{i=1}^d \bigl(\alpha p_i(1+\delta)\bigr)^{n_i}\nonumber\\
&&\quad  \cdot \bigl(\beta p(1+\delta)\bigr)^{n_{d+1}}\cdot \bigl(\beta q(1+\delta)\bigr)^{n_{d+2}};\label{equ:analytic1}
\end{eqnarray}
that is, the mapping $(\mu_0,\alpha,p)\mapsto \mathcal{F}(1)$ varies real-analytically when considered as a power series in $\underline{p}$. 
%
This will be very helpful in the proof of the next two lemmas, which are the essential ingredients for the proof of Theorem \ref{th:analyticity}.
\begin{lemma}\label{lem:ET0}
The mapping
$$
(\mu_0,\alpha,p)\mapsto \mathbb{E}[T_1-T_0]
$$
varies real-analytically.
\end{lemma}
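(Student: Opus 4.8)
The plan is to exhibit $\mathbb{E}[T_1-T_0]$ as the value at $z=1$ of the derivative of a power series in $z$ whose radius of convergence exceeds $1$ and whose coefficients are finite-path probabilities of $(X_n)_{n\in\N_0}$; real-analyticity then follows from the mechanism recorded around (\ref{equ:path-prob}) and (\ref{equ:analytic1}). Concretely, set $\mathbf{F}(z):=\mathbb{E}\bigl[z^{T_1-T_0}\bigr]=\sum_{n\ge1}\mathbb{P}[T_1-T_0=n]\,z^n$. By Lemma \ref{lem:T-expmom} the variable $T_1-T_0$ has exponential moments, so the power series $\mathbf{F}$ has radius of convergence strictly bigger than $1$, is holomorphic near $z=1$, and $\mathbf{F}'(1)=\mathbb{E}[T_1-T_0]$. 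It thus suffices to rewrite $\mathbf{F}$ in a form to which (\ref{equ:analytic1}) applies.

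To do so I would use the genuine Markov property of $(\mathbf{W}_k,\mathbf{i}_k)_{k\in\N}$ from Proposition \ref{lem:exit-time-chain}. Since $\tau_0$ is a hitting time of the state $s_0=(e_0te_0,1)$ for this chain, $T_1-T_0=\sum_{j=\tau_0+1}^{\tau_1}\mathbf{i}_j$ is distributed as $\sum_{k=1}^{\tau^+}\mathbf{i}_k$ under the law of the chain started at $s_0$, where $\tau^+$ is the first return time to $s_0$. Decomposing over first-return paths $s_0=(v_0,m_0),(v_1,m_1),\dots,(v_k,m_k)=s_0$ (with $v_i=w_it_ih_i$) and inserting the explicit transition probabilities of Proposition \ref{lem:exit-time-chain}, the ratios $\xi(t_ih_i)/\xi(t_{i-1}h_{i-1})$ telescope along a closed excursion and cancel entirely, since $t_0h_0=te_0=t_kh_k$. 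Hence $\mathbf{F}(z)$ becomes a sum, over excursion lengths $k$ and admissible letter-sequences $(v_1,\dots,v_{k-1})$, of products of the first-visit generating functions
$$\Psi_{t_1h_1,w_2t_2h_2}(z):=\sum_{n\ge1}\mathbb{P}_{t_1h_1}\bigl[X_n=t_1w_2t_2h_2,\,|X_{n-1}|_t=1,\,\forall n'<n:|X_{n'}|_t\ge1\bigr]z^n,$$
up to minor bookkeeping corrections for intermediate returns to $s_0$, each of which only removes the single coefficient $[z^1]\Psi_{\cdot,e_0te_0}(z)$, itself a finite-path probability.

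Now each coefficient of $\Psi_{t_1h_1,w_2t_2h_2}(z)$ is the probability of an event depending only on $X_0,\dots,X_n$, hence expands as in (\ref{equ:path-prob}); and since it is dominated coefficientwise by $p^{(n)}(t_1h_1,t_1w_2t_2h_2)$, Lemma \ref{lem:radius-of-convergence} gives $\Psi_{t_1h_1,w_2t_2h_2}(z)$ a radius of convergence at least $R>1$. By the argument of (\ref{equ:analytic1}), both $\Psi_{t_1h_1,w_2t_2h_2}(1)$ and $\Psi_{t_1h_1,w_2t_2h_2}'(1)$ (the latter having the same radius of convergence) are convergent multivariate power series in $\underline{p}$ on a neighbourhood of any admissible parameter point. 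The sums over the possibly infinite coset-representative sets $X,Y$ are controlled by Lemma \ref{lem:K-convergence}: $\sum_{w_2}\Psi_{t_1h_1,w_2t_2h_2}(z)$ is dominated coefficientwise by $z\,\mathcal{K}(z)$, which still has radius of convergence $>1$. Differentiating $\mathbf{F}$ at $z=1$ turns each excursion term into a finite sum of products of values $\Psi_{\cdot,\cdot}(1)$ with exactly one factor $\Psi_{\cdot,\cdot}'(1)$; summing over $k$ is legitimate because $\mathbf{F}$ itself has radius of convergence $>1$. Since finite sums, countable sums dominated coefficientwise by series of radius $>1$, products, and differentiation at a fixed interior point all preserve the class of functions locally representable by convergent power series in $\underline{p}$, we conclude that $(\mu_0,\alpha,p)\mapsto\mathbb{E}[T_1-T_0]$ is real-analytic.

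The main obstacle is the bookkeeping in this last step: one must perform simultaneously the summations over excursion lengths $k$, over the infinite alphabets $X\cup Y$ at each excursion step, and the $z$-expansion of each $\Psi$, and then interchange all of these with the $\underline{p}$-power-series expansion. This requires a single domination that stays finite when all parameters are slightly inflated, and the bounds needed are exactly the ``radius $>1$'' statements for the Green function (Lemma \ref{lem:radius-of-convergence}), for $\mathcal{K}$ (Lemma \ref{lem:K-convergence}), and for $\mathbf{F}$ (Lemma \ref{lem:T-expmom}); together with non-negativity of all coefficients, the interchange reduces to an application of Tonelli's theorem.
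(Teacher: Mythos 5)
Your proposal is correct and follows essentially the same route as the paper: write $\mathbb{E}[T_1-T_0]$ as the derivative at $z=1$ of $\sum_n \mathbb{P}[T_1-T_0=n]z^n$ (radius $>1$ by Lemma \ref{lem:T-expmom}), decompose $\mathbb{P}[T_1-T_0=n]$ over first-return excursions of $(\mathbf{W}_k,\mathbf{i}_k)_{k\in\N}$ to $s_0$ so that the $\xi$-ratios telescope to $\xi(te_0)/\xi(te_0)=1$ and the coefficient becomes a sum of finite-path probabilities of $(X_n)_{n\in\N_0}$ in the form (\ref{equ:path-prob}), then conclude via (\ref{equ:analytic1}). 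Your extra bookkeeping via the functions $\Psi$, the domination by $\mathcal{K}(z)$, and Tonelli just makes explicit what the paper compresses into the statement that a suitable path set $A_n$ exists; note only that along a genuine first-return excursion of the chain there are no intermediate visits to $s_0$, so the ``corrections'' you mention are not needed.
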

\begin{proof}
First, observe that we can rewrite the expectation as
$$
\mathbb{E}[T_1-T_0]=\sum_{n\geq 1}\mathbb{P}[T_1-T_0=n]\cdot n = \frac{\partial}{\partial z}\biggl[\sum_{n\geq 1}\mathbb{P}[T_1-T_0=n]\cdot z^n\biggr]\Biggl|_{z=1}.
$$
Since $T_1-T_0$ has exponential moments, the power series $\sum_{n\geq 1}\mathbb{P}[T_1-T_0=n]\cdot z^n$ has radius of convergence strictly bigger than $1$. According to the remarks at the beginning of this section it suffices to show that the probabilities $\mathbb{P}[T_1-T_0=n]$, $n\in\N$ can be written in the form of (\ref{equ:path-prob}). We define
$$
\mathbb{D}_{m,n}:=\Bigl\lbrace \bigl( (g_1t_1h_1,n_1), \dots,(g_mt_mh_m,n_m))\in(\mathbb{D}\setminus\{s_0\})^m \,\Bigl|\, n_1+\dots +n_m=n\Bigr\rbrace.
$$
By conditioning on the value of $T_0$ we obtain together with positive recurrence of $(\mathbf{W}_k,\mathbf{i}_k)_{k\in\N}$:
\begin{eqnarray*}
&&\mathbb{P}[T_1-T_0=n] \\
&=& \sum_{k\geq 1}\sum_{w_1,\dots, w_{k-1}\in\mathbb{D}\setminus\{s_0\}} \mathbb{P}\left[\substack{(\mathbf{W}_1,\mathbf{i}_1)=w_1,\dots,(\mathbf{W}_{k-1},\mathbf{i}_{k-1})=w_{k-1}, \\(\mathbf{W}_{k},\mathbf{i}_{k})=s_0}\right] \\
&&\ \cdot \sum_{m=1}^n \sum_{(\bar w_1,\dots, \bar w_{m-1})\in\mathbb{D}_{m-1,n-1}} \mathbb{P}\left[\substack{ (\mathbf{W}_{k+1},\mathbf{i}_{k+1})=\bar w_1,\dots,\\ (\mathbf{W}_{k+m-1},\mathbf{i}_{k+m-1})=\bar w_{m-1}, \\ (\mathbf{W}_{k+m},\mathbf{i}_{k+m})=s_0}\, \biggl| \, (\mathbf{W}_{k},\mathbf{i}_{k})=s_0\right] \\
&=& \sum_{m=1}^n \sum_{(\bar w_1,\dots, \bar w_{m-1})\in\mathbb{D}_{m-1,n-1}} \mathbb{P}\left[\substack{ (\mathbf{W}_{1},\mathbf{i}_{1})=\bar w_1,\dots,(\mathbf{W}_{m-1},\mathbf{i}_{m-1})=\bar w_{m-1},\\ (\mathbf{W}_{m},\mathbf{i}_{m})=s_0} \, \bigl| \, (\mathbf{W}_{0},\mathbf{i}_{0})=s_0\right].
\end{eqnarray*}
Due to the formula in Proposition \ref{lem:exit-time-chain} for the transition probabilities of the process $(\mathbf{W}_k,\mathbf{i}_k)_{k\in\N}$ we can find a set $A_n$, $n\in\N$, of paths of length $n$ of the random walk $(X_n)_{n\in\N_0}$ such that we can rewrite $\mathbb{P}[T_1-T_0=n]$ as 
\begin{equation}\label{equ:Paths-T1-T0}
\mathbb{P}[T_1-T_0=n]=\frac{\xi(te_0)}{\xi(te_0)}\cdot \sum_{\mathrm{Path}\in A_n}\mathbb{P}[\mathrm{Path}]= \sum_{\mathrm{Path}\in A_n}\mathbb{P}[\mathrm{Path}].
\end{equation}
Since every probability $\mathbb{P}[\mathrm{Path}]$, $\mathrm{Path}\in A_n$, can be rewritten in the form of (\ref{equ:path-prob}), we finally get  analyticity of $\mathbb{E}[T_1-T_0]$ as explained in (\ref{equ:analytic1}).
\end{proof}

%
Analogously, we have the following property:
\begin{lemma}\label{lem:EellT0}
The mapping
$$
(\mu_0,\alpha,p)\mapsto \mathbb{E}\bigl[\ell([X_{T_1}])-\ell([X_{T_0}])\bigr]
$$
varies real-analytically.
\end{lemma}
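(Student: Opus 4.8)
The plan is to mimic the proof of Lemma \ref{lem:ET0}, carrying the extra $\ell$-weights along and controlling them with the polynomial growth of $\ell$ and the exponential moments from Lemma \ref{lem:T-expmom}. Recall from (\ref{def:Li}) that $\widetilde{L}_1 = \ell([X_{T_1}])-\ell([X_{T_0}]) = \sum_{j=\tau_0+1}^{\tau_1}\ell(g_jt_j)$, where $g_jt_j$ is the ``letter block'' of $\mathbf{W}_j$; by polynomial growth of $\ell$ one has $\ell(g_jt_j)\le C\,\mathbf{i}_j^\kappa + M$ with $M:=\max\{\ell(t),\ell(t^{-1})\}$, hence $\widetilde{L}_1\le C(T_1-T_0)^\kappa + M(T_1-T_0)$. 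In particular $\widetilde{L}_1$ is a deterministic, nonnegative, polynomially-length-bounded function of the realised trajectory of $(X_n)_{n\in\N_0}$ up to time $T_1$.

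First I would set $a_n:=\mathbb{E}\bigl[\widetilde{L}_1\cdot\mathbf{1}_{\{T_1-T_0=n\}}\bigr]$, so that $\mathbb{E}[\widetilde{L}_1]=\sum_{n\ge1}a_n$; the bound above together with Lemma \ref{lem:T-expmom} gives $a_n\le (Cn^\kappa+Mn)\,\mathbb{P}[T_1-T_0=n]$, so $\sum_{n\ge1}a_n(1+\delta)^n<\infty$ for $\delta>0$ small, and the power series $\mathcal{F}_\ell(z):=\sum_{n\ge1}a_nz^n$ has radius of convergence strictly bigger than $1$. Next I would run the combinatorial decomposition of the proof of Lemma \ref{lem:ET0} verbatim: conditioning on $(\mathbf{W}_0,\mathbf{i}_0)=s_0$ and summing over $m$ and over $(\bar w_1,\dots,\bar w_{m-1})\in\mathbb{D}_{m-1,n-1}$, the transition probabilities of Proposition \ref{lem:exit-time-chain} together with the telescoping of the $\xi$-ratios along a regeneration block (which leaves the harmless factor $\xi(te_0)/\xi(te_0)=1$, exactly as in (\ref{equ:Paths-T1-T0})) express $a_n=\sum_{\mathrm{Path}\in A_n}\widetilde{L}_1(\mathrm{Path})\cdot\mathbb{P}[\mathrm{Path}]$ for a set $A_n$ of length-$n$ trajectories of $(X_n)_{n\in\N_0}$, where $\widetilde{L}_1(\mathrm{Path})\ge 0$ is the $\ell$-weight of the stabilised prefix increment of that trajectory. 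Since every $\mathbb{P}[\mathrm{Path}]$ has the monomial form (\ref{equ:path-prob}) and $\widetilde{L}_1(\mathrm{Path})$ is a nonnegative constant, each $a_n$ is again of the form (\ref{equ:path-prob}) with nonnegative coefficients.

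Finally, as in (\ref{equ:analytic1}): because $\mathcal{F}_\ell$ has radius of convergence $>1$, for $\delta>0$ small the multivariate series obtained by inflating every parameter of $\underline{p}$ by the factor $1+\delta$ still converges (its value is bounded by $\mathcal{F}_\ell(1+\delta)<\infty$, all coefficients being $\ge0$), so $\mathbb{E}[\ell([X_{T_1}])-\ell([X_{T_0}])]=\mathcal{F}_\ell(1)$ is the restriction to $\mathcal{P}$ of a convergent multivariate power series, i.e.\ it varies real-analytically in $(\mu_0,\alpha,p)$. The only step requiring genuine care is the very first one: checking that $\widetilde{L}_1(\mathrm{Path})$ is indeed a deterministic function of the finite trajectory and grows only polynomially in its length, so that multiplying the path probabilities by it destroys neither the nonnegativity of the coefficients nor the enlarged radius of convergence; once this is in place, the argument is a word-for-word copy of Lemmas \ref{lem:ET0} and \ref{lem:T-expmom}. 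Combined with Lemma \ref{lem:ET0} and formula (\ref{equ:lambda-T-formula}), this completes the proof of Theorem \ref{th:analyticity}.
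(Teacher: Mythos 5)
Your proposal is correct and follows essentially the same route as the paper: bound $\widetilde L_1$ by $C(T_1-T_0)^\kappa+M(T_1-T_0)$, use the exponential moments of $T_1-T_0$ to get a radius of convergence strictly bigger than $1$ for $\sum_n a_n z^n$, and then express each $a_n$ via path probabilities of the form (\ref{equ:path-prob}) so that (\ref{equ:analytic1}) applies. The only cosmetic difference is that the paper groups the paths by the attained value $s$ of $\ell([X_{T_1}])-\ell([X_{T_0}])$ (sets $A_{n,s}$) whereas you weight each path individually by $\widetilde L_1(\mathrm{Path})$, which is equivalent.
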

\begin{proof}
We start expanding the expectation $\mathbb{E}\bigl[z^{T_1-T_0}\ell([X_{T_1}])-\ell([X_{T_0}])\bigr]$, $z\in\mathbb{C}$, where we will use the notation $\bar w_k=(g_kt_kh_k,n_k)$ for  $\bar w_k\in\mathbb{D}$:
\begin{eqnarray*}
&&\mathbb{E}\bigl[z^{T_1-T_0}\bigl(\ell([X_{T_1}])-\ell([X_{T_0}])\bigr)\bigr] \\
&=&
\underbrace{\sum_{k\geq 1}\sum_{w_1,\dots, w_{k-1}\in\mathbb{D}\setminus\{s_0\}} \mathbb{P}\left[\substack{(\mathbf{W}_1,\mathbf{i}_1)=w_1,\dots,(\mathbf{W}_{k-1},\mathbf{i}_{k-1})=w_{k-1}, \\(\mathbf{W}_{k},\mathbf{i}_{k})=s_0}\right]}_{=\P[T_0<\infty]=1} \\
&&\ \cdot \sum_{n\geq 1}\sum_{m=1}^n \sum_{(\bar w_1,\dots, \bar w_{m-1})\in\mathbb{D}_{m-1,n-1}} \mathbb{P}\left[\substack{ (\mathbf{W}_{k+1},\mathbf{i}_{k+1})=\bar w_1,\dots, \\(\mathbf{W}_{k+m-1},\mathbf{i}_{k+m-1})=\bar w_{m-1}, \\ (\mathbf{W}_{k+m},\mathbf{i}_{k+m})=s_0}\, \biggl| \, (\mathbf{W}_{k},\mathbf{i}_{k})=s_0\right] \\
&&\ \cdot z^{n_1+\dots + n_{m-1}+1} \cdot \biggl(\sum_{j=1}^{m-1} \ell(g_jt_j)+\ell(e_0t)\biggr)\\
&=& \sum_{n\geq 1} \underbrace{\sum_{m=1}^n \sum_{(\bar w_1,\dots, \bar w_{m-1})\in\mathbb{D}_{m-1,n-1}} \mathbb{P}\left[\substack{ (\mathbf{W}_{1},\mathbf{i}_{1})=\bar w_1,\dots,\\ (\mathbf{W}_{m-1},\mathbf{i}_{m-1})=\bar w_{m-1},\\ (\mathbf{W}_{m},\mathbf{i}_{m})=s_0} \, \biggl| \, (\mathbf{W}_{0},\mathbf{i}_{0})=s_0\right]}_{=\P[T_1-T_0=n]}\\
&&\quad
\cdot  z^{n}\cdot \biggl(\sum_{j=1}^{m-1} \ell(g_jt_j)+\ell(e_0t)\biggr).
\end{eqnarray*}
For real $z>0$, we can bound this sum from above by
$$
\mathbb{E}\bigl[z^{T_1-T_0}\bigl(\ell([X_{T_1}])-\ell([X_{T_0}])\bigr)\bigr]  \leq
  \sum_{n\geq 1} \mathbb{P}[T_1-T_0=n]\cdot z^n \cdot \bigl( C\cdot n^\kappa + n\cdot \max\{ \ell(t),\ell(t^{-1})\}\bigr).
$$
Since the power series $\sum_{n\geq 1}\mathbb{P}[T_1-T_0=n]\cdot z^n$ has radius of convergence strictly bigger than $1$ due to existence of exponential moments of $T_1-T_0$ (see \mbox{Lemma \ref{lem:T-expmom}),} the left hand side of the above inequality converges for  $z=1+\delta$ with $\delta>0$ sufficiently small. Rewriting the left hand side yields
\begin{eqnarray*}
&& \mathbb{E}\bigl[z^{T_1-T_0}\bigl(\ell([X_{T_1}])-\ell([X_{T_0}])\bigr)\bigr] \\
&=& \sum_{n\in\N}z^n \cdot \underbrace{\sum_{s\in  \mathrm{supp}\bigl(\ell([X_{T_1}])-\ell([X_{T_0}])\bigr)} s\cdot \P\bigl[ T_1-T_0=n,\ell([X_{T_1}])-\ell([X_{T_0}])=s\bigr]}_{=:a_n}.
\end{eqnarray*}
For each $n\in\N$ and each $s\in  \mathrm{supp}\bigl(\ell([X_{T_1}])-\ell([X_{T_0}])\bigr)$, we can find -- analogously to (\ref{equ:Paths-T1-T0}) -- a set of paths $A_{n,s}$ of length $n$ such that
$$
a_n=\sum_{s\in  \mathrm{supp}\bigl(\ell([X_{T_1}])-\ell([X_{T_0}])\bigr)} \P[A_{n,s}]\cdot s,
$$
that is, we can write $a_n$ in the form of (\ref{equ:path-prob}). The rest follows as explained in (\ref{equ:analytic1}), which proves analyticity of $\mathbb{E}\bigl[\ell([X_{T_1}])-\ell([X_{T_0}])\bigr]$.
%
%
\end{proof}

\begin{proof}[Proof of Theorem \ref{th:analyticity}]
The proof follows now directly from Lemmas \ref{lem:ET0} and \ref{lem:EellT0} in view of the drift formula given in (\ref{equ:lambda-T-formula}).
\end{proof}

\begin{proof}[Proof of Theorem \ref{thm:variance-analytic}]
This can be checked analogously to Lemmas \ref{lem:ET0} and \ref{lem:EellT0} with a similar reasoning (without needing any further additional techniques/ideas)  due to existence of exponential moments of $T_1-T_0$.  Therefore, we omit a further, detailed proof at this point. 
\end{proof}

\begin{appendix}
\label{sec:appendix}
\section{Remaining proofs}

\begin{lemma}\label{lem:p-neq-1/2}
Consider the case $A=B= G_0$ and $p\neq \frac12$. Then $G(e,e|z)$ has  radius of convergence strictly bigger than $1$.
\end{lemma}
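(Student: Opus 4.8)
The plan is to collapse the geometry to $\mathbb{Z}$ by means of the projection $\psi$ already introduced in the proof of Lemma \ref{lem:recurrence}, and to compare return probabilities of $(X_n)_{n\in\N_0}$ with those of the projected walk. First I would recall that, when $A=B=G_0$, the relation $g_0t=t\varphi(g_0)$ holds for every $g_0\in G_0$, so that each element of $G$ has normal form $t^ng_0$ with $n\in\Z$ and $g_0\in G_0$, and the map $\psi\colon G\to\Z$, $\psi(t^ng_0):=n$, is well defined. Since $X_{n+1}=X_n\zeta_{n+1}$ with $\zeta_{n+1}\sim\mu$ independent of the past, the increment $\psi(X_{n+1})-\psi(X_n)$ depends only on $\zeta_{n+1}$: it equals $+1$ if $\zeta_{n+1}=t$, $-1$ if $\zeta_{n+1}=t^{-1}$, and $0$ if $\zeta_{n+1}\in G_0$. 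Hence $\bigl(\psi(X_n)\bigr)_{n\in\N_0}$ is a delayed simple random walk on $\Z$ whose steps are $+1$, $-1$, $0$ with probabilities $a:=(1-\alpha)p$, $b:=(1-\alpha)(1-p)$, $c:=\alpha$ respectively.

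The second step is the comparison: the event $\{X_n=e\}$ is contained in $\{\psi(X_n)=0\}$, so $p^{(n)}(e,e)\le\Prob_0[\psi(X_n)=0]$ for all $n$, and therefore, for real $z\ge 0$,
$$
G(e,e|z)\ \le\ \sum_{n\ge 0}\Prob_0[\psi(X_n)=0]\,z^n.
$$
It thus suffices to show that the right-hand series has radius of convergence strictly bigger than $1$.

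The third step is to compute this series explicitly. Writing $\Prob_0[\psi(X_n)=0]=[x^0]\,(ax+bx^{-1}+c)^n$ and extracting the coefficient of $x^0$ from $\bigl(1-z(ax+bx^{-1}+c)\bigr)^{-1}$ gives the standard identity
$$
\sum_{n\ge 0}\Prob_0[\psi(X_n)=0]\,z^n\ =\ \frac{1}{\sqrt{(1-cz)^2-4ab\,z^2}},
$$
whose radius of convergence is the smallest positive root of $(1-cz)^2=4ab\,z^2$, namely $z_0=\bigl(c+2\sqrt{ab}\bigr)^{-1}=\bigl(\alpha+2(1-\alpha)\sqrt{p(1-p)}\bigr)^{-1}$. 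Since $p\neq\frac12$, the inequality $2\sqrt{p(1-p)}<1$ holds strictly, so $\alpha+2(1-\alpha)\sqrt{p(1-p)}<\alpha+(1-\alpha)=1$, i.e. $z_0>1$. Combining with the comparison above, $G(e,e|z)$ has radius of convergence at least $z_0>1$, which proves the lemma.

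I do not expect a genuine obstacle here: the only mildly technical point is the explicit evaluation of the generating function of the delayed walk on $\Z$, and even that can be sidestepped by a direct Stirling-type estimate showing $\Prob_0[\psi(X_n)=0]^{1/n}\to\alpha+2(1-\alpha)\sqrt{p(1-p)}$. The real content is structural, namely that the case $A=B=G_0$ reduces the walk to one on $\Z$ whose drift is nonzero precisely when $p\neq\frac12$.
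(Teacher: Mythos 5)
Your proof is correct. Both you and the paper reduce the problem to a random walk on $\Z$ via the homomorphism $\psi$, but the reductions are executed differently. The paper works with the genuine (non-lazy) nearest-neighbour walk on $\Z$ with steps $\pm1$ taken with probabilities $p,1-p$, computes its first-return generating function $U_{\Z}$ by solving a quadratic system for the first-passage functions, and then proves by an inductive path decomposition (its Claims 1 and 2) the identity $G(e,A|z)=G_{\Z}\bigl(\tfrac{(1-\alpha)z}{1-\alpha z}\bigr)\cdot\tfrac{1}{1-\alpha z}$, finishing with $G(e,e|z)\le G(e,A|z)$. You instead absorb the $G_0$-steps into the projected walk itself, obtaining the lazy walk on $\Z$ with holding probability $\alpha$, and you replace the paper's generating-function identity by the one-line domination $p^{(n)}(e,e)\le\P_0[\psi(X_n)=0]$, which is immediate because $\psi$ is a homomorphism with $\psi(e)=0$ and the increments $\psi(\zeta_i)$ are i.i.d. Your evaluation of the lazy walk's Green function by coefficient extraction, with branch point at $z_0=\bigl(\alpha+2(1-\alpha)\sqrt{p(1-p)}\bigr)^{-1}>1$ exactly when $p\neq\tfrac12$, is standard and correct (and, as you note, could be replaced by the spectral-radius estimate $\limsup_n\P_0[\psi(X_n)=0]^{1/n}=\alpha+2(1-\alpha)\sqrt{p(1-p)}$). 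The net effect is a shorter and more elementary argument: you avoid the paper's two Claims entirely, at the cost of obtaining only an upper bound for $G(e,e|z)$ rather than the sharper identity for $G(e,A|z)$ --- which is all the lemma requires.
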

\begin{proof}
The idea is to trace back this case to a non-symmetric nearest neighbour random walk on $\mathbb{Z}$, from which we can derive the required result.
\par
Let $(Z_n)_{n\in\N_0}$ be a random walk on $\mathbb{Z}$ governed by the probability measure $\mu_{\mathbb{Z}}(1)=p,\mu_{\mathbb{Z}}(-1)=1-p$, that is, we have $\mathbb{P}[Z_{n+1}=x+1\mid Z_n=x]=p$ and \mbox{$\mathbb{P}[Z_{n+1}=x-1\mid Z_n=x]=1-p$} for all $n\in\N$, $x\in\mathbb{Z}$. We define the associated first visit generating functions:
\begin{eqnarray*}
F_{\mathbb{Z}}(0,1|z)&:=& \sum_{n\geq 1}\mathbb{P}_0[Z_n=1,\forall m\in\{1,\dots,m-1\}:Z_m\neq 1]\,z^n,\\
F_{\mathbb{Z}}(0,-1|z)&:=& \sum_{n\geq 1}\mathbb{P}_0[Z_n=-1,\forall m\in\{1,\dots,m-1\}:Z_m\neq -1]\,z^n.
\end{eqnarray*}
The first return generating function is given by 
$$
U_{\mathbb{Z}}(z):=\sum_{n\geq 1}\mathbb{P}_0[Z_n=0,\forall m\in\{1,\dots,m-1\}:Z_m\neq 0]\,z^n. 
$$
Conditioning on the first step gives the following system:
\begin{eqnarray*}
F_{\mathbb{Z}}(0,1|z) &=& \mu_{\mathbb{Z}}(1)\cdot z + \mu_{\mathbb{Z}}(-1)\cdot z \cdot F_{\mathbb{Z}}(0,1|z)^2,\\
F_{\mathbb{Z}}(0,-1|z) &=& \mu_{\mathbb{Z}}(-1)\cdot z + \mu_{\mathbb{Z}}(1)\cdot z \cdot F_{\mathbb{Z}}(0,-1|z)^2,\\
U_{\mathbb{Z}}(z) &=& \mu_{\mathbb{Z}}(1)\cdot z\cdot  F_{\mathbb{Z}}(0,-1|z)+ \mu_{\mathbb{Z}}(-1)\cdot z\cdot  F_{\mathbb{Z}}(0,1|z).
\end{eqnarray*}
Solving this system leads to the formula
$$
U_{\mathbb{Z}}(z)=(1-p)\cdot z \cdot  \frac{1 - \sqrt{1 - 4pz^2 + 4p^2z^2}}{2pz} + p\cdot z \cdot  \frac{1 + \sqrt{1 - 4pz^2 + 4p^2z^2}}{2pz}.
$$
Therefore, $U_{\mathbb{Z}}(z)$  has radius of convergence strictly bigger than $1$ and satisfies $U_{\mathbb{Z}}(1)<1$ due to transience, and consequently
$$
G_{\mathbb{Z}}(z):=\sum_{n\geq 0} \mu_{\mathbb{Z}}^{(n)}(0)\cdot z^n=\frac{1}{1-U_{\mathbb{Z}}(z)}
$$
has also radius of convergence strictly bigger than $1$.
\par
We now turn back to our random walk on $G$. Define the stopping times
$$
s(0):=0,\quad \forall k\in\N: s(k):=\min\bigl\lbrace m> s(k-1) \mid X_{m-1}^{-1}X_m\in \{t,t^{-1}\}\bigr\rbrace.
$$
That is, $s(k)$ is the $k$-th time that the random walk on $G$ performs a step w.r.t. $\delta_{t^{\pm 1}}$.
Due to transience and finiteness of $A=B=G_0$, $s(k)<\infty$ almost surely for all $k\in\N$. For $k\geq 1$, $n_0:=0,n_1,\dots,n_k\in\mathbb{Z}$, define
$$
w(n_1,\dots,n_k):=\mathbb{E} \Bigl[ z^{s(k)}\mathds{1}_{[X_{s(j)}\in t^{n_j}G_0 \forall j\in\{1,\dots,k\}]}\,\Bigl|\, X_0=e\Bigr].
$$
\underline{Claim 1:} 
$$
w(n_1,\dots,n_k) = \left(\frac{z}{1-\alpha z}\right)^k \cdot \prod_{j=1}^k   \mu\bigl(t^{n_j-n_{j-1}}\bigr).
$$
\underline{Proof of Claim 1:} For $k=1$, we decompose all paths by the intermediate steps within $G_0$ until time $s(1)$ and set $x_0:=e$, $n_0:=0$:
\begin{eqnarray*}
w(n_1) &=& \sum_{m\geq 1} \underbrace{\sum_{g_1,\dots,g_{m-1}\in G_0} \mathbb{P}[\forall j\in\{1,\dots,m-1\}: X_{j}=g_j]}_{=\alpha^{m-1}}\cdot z^{m-1}\cdot  \mu(t^{n_1}) \cdot z \\
&=& \frac{z}{1-\alpha z}\cdot \mu(t^{n_1}) =\frac{z}{1-\alpha z}\cdot  \mu(t^{n_1-n_0}).
\end{eqnarray*}
We remark that, for all $m\in\N$ and $h\in G_0$, we have the following equation due to group invariance of our  random walk on $G$:
\begin{eqnarray*}
&&\sum_{g_1,\dots,g_{m-1}\in G_0} \mathbb{P}_{t^{k-1}}[\forall j\in\{1,\dots,m-1\}: X_{j}=t^{k-1}g_j]\\
&=& \sum_{g_1,\dots,g_{m-1}\in G_0} \mathbb{P}_{t^{k-1}h}[\forall j\in\{1,\dots,m-1\}: X_{j}=t^{k-1}hg_j].
\end{eqnarray*}
Now we can conclude analogously by induction:
\begin{eqnarray*}
&&w(n_1,\dots,n_k) \\
&=& w(n_1,\dots,n_{k-1}) \\
&&\quad \cdot \sum_{\substack{m\geq 1, \\ g_1,\dots,g_{m-1}\in G_0}} \mathbb{P}_{t^{n_{k-1}}}[\forall j\in\{1,\dots,m-1\}: X_{j}=t^{n_{k-1}}g_j]\cdot  \mu\bigl(t^{n_k-n_{k-1}}\bigr)\cdot z^m  \\
&=& \left(\frac{z}{1-\alpha z}\right)^{k-1} \cdot \prod_{j=1}^{k-1} \mu\bigl(t^{n_{j}-n_{j-1}}\bigr) \cdot \frac{z}{1-\alpha z} \cdot \mu\bigl(t^{n_{k}-n_{k-1}}\bigr)\\
&=& \left(\frac{z}{1-\alpha z}\right)^k \cdot \prod_{j=1}^k \mu\bigl(t^{n_j-n_{j-1}}\bigr).
\end{eqnarray*}
This finishes the proof of Claim 1.
\par
Now we connect the random walk on $\mathbb{Z}$ with the random walk $(X_n)_{n\in\N_0}$ on $G$, for which we introduce the notation 
$$
G(e,A|z):=\sum_{n\geq 0}\mathbb{P}[X_n\in A]\,z^n=\sum_{g_0\in G_0} G(e,g_0|z).
$$ 
\underline{Claim 2:}
$$
G(e,A|z)=G_{\mathbb{Z}}\left(\frac{(1-\alpha)z}{1-\alpha z}\right)\cdot \frac{1}{1-\alpha z}.
$$
\underline{Proof of  Claim 2:} First, we recall that $A=G_0$ and observe the following equation:
$$
\sum_{n\geq 0}\mathbb{P}[X_n\in A,s(1)>n]\,z^n= \frac{1}{1-\alpha z}.
$$
Furthermore, we recall that $\mu(t)=(1-\alpha)p$ and $\mu(t^{-1})=(1-\alpha)(1-p)$. By decomposing each path from $e$ to $A$ by the number $k$ of transitions from the sets $t^{m}G_0$ to $t^{m\pm 1}G_0$, we obtain:
\begin{eqnarray*}
&&G(e,A|z)\\
&=&  \frac{1}{1-\alpha z}+ \sum_{k\geq 1} \sum_{n_1,\dots,n_{k-1}\in \mathbb{Z}} w(n_1,\dots,n_{k-1},0)  \cdot \frac{1}{1-\alpha z}\\
&=&  \frac{1}{1-\alpha z}+ \sum_{k\geq 1}  \left(\frac{z}{1-\alpha z}\right)^k \cdot \sum_{n_1,\dots,n_{k-1}\in\mathbb{Z}}\prod_{j=1}^{k-1} \mu\bigl(t^{n_j-n_{j-1}}\bigr)
 \cdot \mu(t^{-n_{k-1}})  \cdot \frac{1}{1-\alpha z}\\
&=&  \frac{1}{1-\alpha z} \cdot \biggl[1+\sum_{k\geq 1}  \left(\frac{(1-\alpha)z}{1-\alpha z}\right)^k \cdot \underbrace{\sum_{n_1,\dots,n_{k-1}\in\mathbb{Z}}\prod_{j=1}^{k-1} \mu_{\mathbb{Z}}(n_j-n_{j-1})\mu_{\mathbb{Z}}(-n_{k-1})}_{=\mu_{\mathbb{Z}}^{(k)}(0)}\biggr]\\
&\leq &   G_{\mathbb{Z}}\left( \frac{(1-\alpha)z}{1-\alpha z}\right)\cdot \frac{1}{1-\alpha z}.
\end{eqnarray*}
This finishes the proof of Claim 2.
\par
Since $G(e,A|z)\geq G(e,e|z)$ the lemma follows now from Claim 2 and the fact that $G_{\mathbb{Z}}(z)$ has radius of convergence strictly bigger than $1$.
\end{proof}

In the following we give the proof of Lemmas \ref{lem:tau-expmom} and \ref{lem:T-expmom} in the remaining case:
\begin{lemma} \label{lem:tau-expmom-A=G}
Consider the case $A=B=G_0$ with $p\in(0,1), p\neq \frac12$. Then the random variables $\tau_1-\tau_0$, $T_0$ and $T_1-T_0$ have exponential moments. 
\end{lemma}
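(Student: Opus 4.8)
The plan is to transcribe the arguments of Lemmas~\ref{lem:tau-expmom} and~\ref{lem:T-expmom} to the degenerate situation $A=B=G_0$. I would first reduce to the case $p>\frac12$; the case $p<\frac12$ is symmetric with $t$ and $t^{-1}$ interchanged throughout, the reference state $s_0=(e_0te_0,1)$ being then replaced by $(e_0t^{-1}e_0,1)$ (indeed, for $p<\frac12$ no $t$-block ever stabilises, so the original $s_0$ would not be visited). Assume henceforth $p>\frac12$. Then $G_0=A=B$ is finite, every $g\in G$ has normal form $t^ng_0$ or $t^{-n}g_0$ with $n\in\N_0$, $g_0\in G_0$, and the projection $\big(\psi(X_n)\big)_{n\in\N_0}$ from the proof of Lemma~\ref{lem:recurrence} is a delayed nearest-neighbour random walk on $\Z$ with positive drift, so $\psi(X_n)\to+\infty$ almost surely. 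Two simplifications follow and should be recorded first: since $X=Y=\{e_0\}$ and since a $t^{-1}$-block can never stabilise once $\psi(X_n)\to+\infty$, every state of $(\W_k,\i{k})_{k\in\N}$ has the form $(e_0th,n)$ with $h\in G_0$; and $\xi(th)=\P[\text{the delayed walk started at }1\text{ never hits }0]$ does not depend on $h$, so the ratios $\xi(th_2)/\xi(th_1)$ in Proposition~\ref{lem:exit-time-chain} equal $1$ (in any case Lemma~\ref{lem:xi>0-2} gives $\xi(th)\ge\min_{h'}\xi(th')>0$, which is all that is needed).

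For $\tau_1-\tau_0$ I would, as in Lemma~\ref{lem:tau-expmom}, produce a uniform $q>0$ such that from \emph{any} state the chain $(\W_k,\i{k})_{k\in\N}$ reaches $s_0$ within two steps with probability at least $q$. From a state $(e_0te_0,m)$ one step suffices: walking $t^k\mapsto t^{k+1}$ and then never returning below $t$-length $k+1$ gives $q\big((e_0te_0,m),(e_0te_0,1)\big)=(1-\alpha)p>0$. From a state $(e_0th_0,m)$ with $h_0\neq e_0$, first move inside the current copy of $G_0$ from $h_0$ to $e_0$ in $\ell_{h_0}:=\min\{m':\mu_0^{(m')}(h_0^{-1}e_0)>0\}$ steps and then increment the $t$-length and stay above it, reaching $(e_0te_0,\ell_{h_0}+1)$ with probability $\ge\alpha^{\ell_{h_0}}\mu_0^{(\ell_{h_0})}(h_0^{-1}e_0)(1-\alpha)p>0$, and then apply the one-step transition above. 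Minimising over the finitely many $h_0\in G_0$ yields $q>0$, whence $\P[\tau_1-\tau_0>2m]\le(1-q)^m$ and $\tau_1-\tau_0$ has exponential moments.

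For $T_0$ and $T_1-T_0$ I would run the bounded-return argument of Lemma~\ref{lem:T-expmom}. The delicate point is that a regeneration needs $\W_k=e_0te_0$ \emph{and} $\i{k}=1$, and one checks that this forces the walk to reach $t^{k}$ (trivial trailing $G_0$-part) and to have incremented its $t$-length already one step earlier, i.e.\ two consecutive $t$-steps are required. Concretely, from any position $t^jg$ ($g\in G_0$) at any time $n$ one can, within $N:=\max_{g\in G_0}\ell_g+2$ steps, walk inside $G_0$ to $t^j$, perform $\zeta=t$ twice to reach $t^{j+2}$, and then stay at $t$-length $\ge j+2$ forever; on this event $\e{j+1}$ and $\e{j+2}$ are consecutive, $\i{j+2}=1$, $\W_{j+2}=e_0te_0$, so the index $j+2$ is a regeneration time occurring within $N$ random-walk steps of $n$. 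This path has probability at least $q_T:=\xi(te_0)\cdot\min_{g\in G_0}\big[\alpha^{\ell_g}\mu_0^{(\ell_g)}(g^{-1}e_0)\big]\cdot\big((1-\alpha)p\big)^2>0$, uniformly in $n$ and in the current position. Hence $\P[T_0>kN]\le(1-q_T)^k$ and $\P[T_1-T_0>kN]\le(1-q_T)^k$, so both $T_0$ and $T_1-T_0$ have exponential moments.

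The main obstacle, as flagged, is the combinatorial bookkeeping in the last step: verifying that the constructed path genuinely yields $s_0$ with \emph{second coordinate $1$}, which is exactly what forces the ``two consecutive $t$-steps from $t^j$'' pattern together with the identification $\e{j+1}=\e{j+2}-1$ (this in turn rests on the fact that $|X_{\e{k}}|_t=k$ exactly, because $t$-length changes by at most one per step). Once this is set up correctly, everything else is a routine transcription of the non-degenerate case, exploiting the finiteness of $G_0=A=B$ and $\psi(X_n)\to+\infty$.
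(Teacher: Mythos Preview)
Your proposal is correct and follows essentially the same approach as the paper: a uniform two-step lower bound for the $(\W_k,\i{k})$-chain to reach $s_0$ yields exponential moments for $\tau_1-\tau_0$, and a uniform bounded-step regeneration scheme (walk within $G_0$ to the trivial coset representative, then two consecutive $t$-steps) yields exponential moments for $T_0$ and $T_1-T_0$. Your write-up is in fact slightly more careful than the paper's in two places---you explicitly note that for $p<\tfrac12$ the reference state must be $(e_0t^{-1}e_0,1)$, and you observe that the $\xi$-ratios collapse to $1$---but the underlying mechanism is identical.

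One small caveat worth flagging (present in the paper's argument as well): the bounded-step construction ``from any position $t^jg$, do two $t$-steps'' only produces a genuine regeneration when $\psi(X_n)=j\ge 0$; if the walk is momentarily at $\psi(X_n)<0$ (which can happen before $T_0$), the two $t$-steps decrease the $t$-length rather than increase it. This is harmless for $T_1-T_0$ since $\psi(X_n)\ge\tau_0>0$ for $n\ge T_0$, and for $T_0$ it is easily patched---e.g.\ by combining the geometric bound on $\{\psi(X_n)\ge 0\}$ with the large-deviation estimate $\P[\psi(X_n)<0]\le e^{-cn}$ for the positively drifted projection---but strictly speaking the uniform-$N$ claim ``from any position'' needs this qualification.
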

\begin{proof}
If $A=B=G_0$ and $p\neq \frac12$, then $\W_k$ has the form $e_0t^kb_k$, $b_k\in B$, for all $k\in\N$, if $p>\frac12$, and  $e_0t^{-1}a_k$, $a_k\in A$, for all $k\in\N$, if $p<\frac12$: this is an easy consequence of transience of the projected random walk $\bigl(\psi(X_n)\bigr)_{n\in\N_0}$ onto $\mathbb{Z}$ from Lemma \ref{lem:recurrence}. We show again that  $(e_0te_0,1)$ can be reached from any other state of $(\W_k,\i{k})_{k\in\N}$ in two steps, where we restrict ourselves to the case $p>\frac12$ (the case $p<\frac12$ works analogously). For $(e_0tb,n_0)\in\mathbb{D}$, choose $n_b\in\N$ with $\mu_0^{(n_b)}(b^{-1})>0$; then
\begin{eqnarray*}
q\bigl((e_0tb,n_0),(e_0te_0,n_b+1) \bigr)&\geq& \frac{\xi(te_0)}{\xi(t_0b)}\cdot \alpha^{n_b}\cdot \mu_0^{(n_b)}(b^{-1})\cdot (1-\alpha)\cdot p>0\ \textrm{ and }\\
q\bigl((e_0te_0,n_b+1),(e_0te_0,1)\bigr) &\geq & (1- \alpha)\cdot p>0,
\end{eqnarray*}
which provides
$$
q:=\min_{b\in B} q\bigl((e_0tb,n_0),(e_0te_0,n_b+1)\bigr)\cdot q\bigl((e_0te_0,n_b+1),(e_0te_0,1)\bigr)>0.
$$
This leads to the desired exponential decay:
$$
\mathbb{P}[\tau_1-\tau_0=n]\leq (1-q)^{\lfloor \frac{n}{2}\rfloor},
$$
that is, $\tau_1-\tau_0$ has exponential moments.
\par
Existence of exponential moments of $T_1-T_0$ follows analogously as in \mbox{Lemma \ref{lem:T-expmom}:} after time $T_0$ the random walk $(X_n)_{n\in\N_0}$ can produce the next regeneration time $T_{1}$ in at most 
$$
N:=\max\{n_h\mid h\in A\cup B\}+2
$$ 
steps, where $n_h:=\min\{ m\in\N\mid \mu_0^{(m)}(h^{-1})\}$. Hence,  there is some $q_T\in (0,1)$ such that 
$$
\mathbb{P}[T_1-T_0=n]\leq (1-q_T)^{\lfloor \frac{n}{N}\rfloor},
$$
which yields existence of exponential moments of $T_1-T_0$. The same reasoning shows existence of exponential moments of $T_0$, which finishes the proof.
\end{proof}

The following lemma is left from Section \ref{sec:clt}, where we introduced the sequence of random variables $(L_i)_{i\in\N}$ in (\ref{def:Li}). 
\begin{lemma}\label{lem:Li-iid}
 $(L_i)_{i\in\N}$  forms an i.i.d. sequence of random variables.
\end{lemma}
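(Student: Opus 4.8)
The plan is to exploit the fact that the $\tau_i$ are genuine stopping times for the Markov chain $(\W_{k},\i{k})_{k\in\N}$ — they are its successive return times to the fixed state $s_0=(e_0te_0,1)$ — and then to invoke the classical cycle (regeneration) decomposition of a recurrent Markov chain. Write $Y_k:=(\W_{k},\i{k})$, and for $i\geq 1$ let
$$
\mathcal{E}_i:=\bigl(Y_{\tau_{i-1}+1},\dots,Y_{\tau_i}\bigr)
$$
denote the $i$-th excursion block, a random element of the countable set $\mathcal{W}$ of finite words $(y_1,\dots,y_m)$ over $\mathbb{D}$ with $y_m=s_0$ and $y_1,\dots,y_{m-1}\neq s_0$. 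Since $(\W_{k},\i{k})_{k\in\N}$ is positive recurrent, each $\tau_i$ is a.s.\ finite, and since $Y_{\tau_{i-1}}=s_0$ for every $i\geq 1$, the strong Markov property applied successively at $\tau_0,\tau_1,\tau_2,\dots$ shows that $(\mathcal{E}_i)_{i\geq 1}$ is an i.i.d.\ sequence, each $\mathcal{E}_i$ having the law of $(Y_1,\dots,Y_\tau)$ under $\mathbb{P}_{s_0}$ with $\tau:=\inf\{k\geq 1\mid Y_k=s_0\}$. Note that the ``initial segment'' $(Y_1,\dots,Y_{\tau_0})$ plays no role, since $L_i$ only involves indices between $\tau_{i-1}$ and $\tau_i$.

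The second step is to express $L_i$ as one and the same measurable function of $\mathcal{E}_i$. Writing $\W_{j}=g_jt_jh_j$ as in the construction, the pair $(g_j,t_j)$ — hence the number $\ell(g_j)+\ell(t_j)$ — is a function of $Y_j$ alone. Because $X_{\e{k}}=g_1t_1\cdots g_kt_kh$ has $|X_{\e{k}}|_t=k$, we have $\ell([X_{\e{k}}])=\sum_{j=1}^k\bigl(\ell(g_j)+\ell(t_j)\bigr)$, so that
$$
\widetilde{L}_i=\ell([X_{T_i}])-\ell([X_{T_{i-1}}])=\sum_{j=\tau_{i-1}+1}^{\tau_i}\bigl(\ell(g_j)+\ell(t_j)\bigr)
$$
is a function of the entries of $\mathcal{E}_i$ only; likewise, since $\e{k}-\e{k-1}=\i{k}$,
$$
T_i-T_{i-1}=\e{\tau_i}-\e{\tau_{i-1}}=\sum_{j=\tau_{i-1}+1}^{\tau_i}\i{j}
$$
depends only on the $\i{\cdot}$-coordinates appearing in $\mathcal{E}_i$. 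Hence $L_i=\widetilde{L}_i-(T_i-T_{i-1})\lambda_\ell=\Phi(\mathcal{E}_i)$, where $\Phi\colon\mathcal{W}\to\mathbb{R}$ is the fixed map sending $\bigl((v_1,n_1),\dots,(v_m,n_m)\bigr)$, with $v_j=g_jt_jh_j\in\mathcal{D}$, to $\sum_{j=1}^m\bigl(\ell(g_j)+\ell(t_j)\bigr)-\lambda_\ell\sum_{j=1}^m n_j$; this is trivially measurable since $\mathcal{W}$ is countable.

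Finally, $(L_i)_{i\in\N}=(\Phi(\mathcal{E}_i))_{i\in\N}$ is the image of an i.i.d.\ sequence under a single measurable map, and is therefore i.i.d., which proves the lemma. The only step demanding any care is the first one, namely the i.i.d.\ structure of the variable-length blocks $\mathcal{E}_i$; but this is the textbook renewal decomposition for recurrent chains — the blocks are independent because each begins at the deterministic state $s_0$ and the post-$\tau_{i-1}$ trajectory is independent of $\mathcal{F}_{\tau_{i-1}}$ by the strong Markov property, and identically distributed for the same reason — and it uses nothing beyond Proposition \ref{lem:exit-time-chain} together with positive recurrence of $(\W_{k},\i{k})_{k\in\N}$. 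The same argument shows in passing that $(\widetilde{L}_i)_{i\in\N}$ and $(T_i-T_{i-1})_{i\in\N}$ are each i.i.d., as is used throughout Section \ref{sec:clt}.
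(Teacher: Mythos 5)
Your proof is correct, and it takes a genuinely different route from the paper's. The paper argues directly on the trajectory of $(X_n)_{n\in\N_0}$: it decomposes every path up to time $\e{\tau_i}$ into the piece up to $\e{\tau_{i-1}}$ and the piece in between, sums over these path families, and uses group invariance to shift the second piece back to a path started at $t$; this yields the identical-distribution claim explicitly, with independence left as an "analogous decomposition". You instead work one level up, on the derived chain $(\W_k,\i{k})_{k\in\N}$: the $\tau_i$ are its successive hitting times of the fixed state $s_0$, hence genuine stopping times for \emph{that} chain, so the classical cycle decomposition gives i.i.d.\ excursion blocks, and $L_i$ is a fixed measurable function of the $i$-th block because $(g_j,t_j)$ is read off from $\W_j$ and $T_i-T_{i-1}=\sum_{j=\tau_{i-1}+1}^{\tau_i}\i{j}$. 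The one point your argument genuinely leans on is that Proposition \ref{lem:exit-time-chain} provides the \emph{full} Markov property of $(\W_k,\i{k})_{k\in\N}$ (conditioning on the entire history, not just one step) — which it does, since its proof computes the conditional law given $(\W_1,\i1),\dots,(\W_k,\i{k})$ — so the strong Markov property at the hitting times of $s_0$ is legitimate. Your route is shorter, delivers independence and identical distribution simultaneously, and gives the i.i.d.\ property of $(T_i-T_{i-1})_{i\in\N}$ and $(\widetilde L_i)_{i\in\N}$ as a byproduct; the paper's route is more self-contained at the level of the random walk itself and makes the role of group invariance explicit, essentially re-running the computation that underlies Proposition \ref{lem:exit-time-chain} rather than citing it as a black box.
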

\begin{proof}
Let be $i\in\N$, $z\in \mathbb{R}$. For $x_0\in G$ with $\P[X_{\e{\tau_i}}=x_0]>0$ and $m\in\N$, denote by $\mathcal{P}^{(1)}_{i,x_0,m}$ the set of paths $(e,w_1,\dots,w_m=x_0)\in G^{m+1}$ (with $\mu(w_{i-1}^{-1}w_i)>0$) of length $m$ such that \mbox{$[X_1=w_1,\dots,X_m=w_m]\cap [X_m=x_0,\e{\tau_i}=m]\neq \emptyset$.} Furthermore, denote by $\mathcal{P}^{(2)}_{i,x_0,m,n,z}$ the set of paths $(x_0,y_1,\dots,y_n)\in G^{n+1}$ of length $n\in\N$ such that 
$$
[X_m=x_0,X_{m+1}=y_1,\dots,X_{m+n}=y_n]\cap \left[\begin{array}{c}X_m=x_0,\e{\tau_{i-1}}=m,\\ \e{\tau_{i}}=m+n,L_i=z\end{array}\right]\neq \emptyset.
$$
By decomposing all paths until time $\e{\tau_{i}}$ into the part until time $\e{\tau_{i-1}}$ and into the part between times $\e{\tau_{i-1}}$ and $\e{\tau_{i}}$  we obtain:
\begin{eqnarray*}
&&\P[L_i=z] \\
&=& \sum_{\substack{x_0\in G:\\ \P[X_{\e{\tau_{i-1}}}=x_0]>0}} \P\bigl[ X_{\e{\tau_{i-1}}}=x_0,L_i=z\bigr]\\
&=&  \sum_{\substack{x_0\in G:\\ \P[X_{\e{\tau_{i-1}}}=x_0]>0}} \sum_{m\geq 1} \sum_{(e,w_1,\dots,w_m)\in \mathcal{P}^{(1)}_{i-1,x_0,m}} \P\bigl[X_1=w_1,\dots,X_m=w_m\bigr] \\
&&\quad \cdot \sum_{n\geq 1} \sum_{(x_0,y_1,\dots,y_n)\in \mathcal{P}^{(2)}_{i,x_0,m,n,z}} \P_{x_0}\bigl[X_1=y_1,\dots,X_n=y_n\bigr] \\
&&\quad \cdot \P_{y_n}\bigl[\forall l\geq 1: X_l \textrm{ has prefix } [y_n]\bigr]\\
&=&\underbrace{\sum_{\substack{x_0\in G:\\ \P[X_{\e{\tau_{i-1}}}=x_0]>0}} \sum_{m\geq 1} \sum_{(e,w_1,\dots,w_m)\in \mathcal{P}^{(1)}_{i-1,x_0,m}} \P\bigl[X_1=w_1,\dots,X_m=w_m\bigr]}_{=\bigl(1-\xi(te_0)\bigr)^{-1}} \\
&&\quad \cdot \sum_{\substack{n\geq 1,\\ (x_0,y_1,\dots,y_n)\in \mathcal{P}^{(2)}_{i,x_0,m,n,z}}} \P_{t}\bigl[X_1=tx_0^{-1}y_1,\dots,X_n=tx_0^{-1}y_n\bigr] \cdot \bigl(1-\xi(te_0)\bigr).
\end{eqnarray*}
In the last equation we used group invariance of our underlying random walk.
Observe  that paths $(x_0,y_1,\dots,y_n)\in \mathcal{P}^{(2)}_{i,x_0,m,n,z}$ lie completely in the set of words having \mbox{prefix $[x_0]$.} Therefore, there is a 1-to-1 correspondence between paths in $\mathcal{P}^{(2)}_{i,x_0,m,n,z}$ and $\mathcal{P}^{(2)}_{1,t,1,n,z}$, which lies completely in the set of words having prefix $t$, established by the shift $g \mapsto tx_0^{-1}g$. Therefore,
$$
\P[L_i=z] = \sum_{n\geq 1} \sum_{(t,y_1,\dots,y_n)\in \mathcal{P}^{(2)}_{1,t,1,n,z}} \P_t\bigl[X_1=y_1,\dots,X_n=y_n\bigr] .
$$
This proves that the $L_i$'s have the same distribution. An analogous decomposition of all possible paths proves independence, which we leave as an exercise to the interested reader.
\end{proof}

\end{appendix}

\section*{Acknowledgements}
The author is grateful to the anonymous referee for several hints regarding content and exposition.

\bibliographystyle{alea3}
\bibliography{literatur}

\end{document}